%% file: SinkhornInformationGeometry.tex
\documentclass[11pt]{amsart}

\newif\iflong
\longfalse

\usepackage[T1]{fontenc}
\usepackage{lmodern}
\usepackage[a4paper,top=2.6cm,bottom=2.6cm,left=2.7cm,right=2.7cm]{geometry}
\usepackage{microtype}
\usepackage{amsmath,amssymb,amsfonts}
\usepackage{mathrsfs}
\usepackage{graphicx}
\usepackage{xcolor}
\usepackage{booktabs}
\usepackage[numbers,sort&compress]{natbib}
\usepackage[colorlinks=true,linkcolor=blue!50!black,citecolor=blue!50!black,urlcolor=blue!50!black]{hyperref}

\makeatletter
\renewcommand\subsection{\@startsection{subsection}{2}\z@{.7\linespacing\@plus\linespacing}{.5\linespacing}{\normalfont\bfseries}}
\makeatother

\theoremstyle{plain}
\newtheorem{theorem}{Theorem}[section]
\newtheorem{proposition}[theorem]{Proposition}
\newtheorem{lemma}[theorem]{Lemma}
\newtheorem{corollary}[theorem]{Corollary}

\theoremstyle{remark}
\newtheorem{example}[theorem]{Example}
\newtheorem{remark}[theorem]{Remark}

\theoremstyle{definition}
\newtheorem{definition}[theorem]{Definition}
\newtheorem{assumption}{Assumption}

\newcommand{\eps}{\varepsilon}
\newcommand{\PX}{\mathcal{P}(X)}
\newcommand{\MX}{\mathcal{M}(X)}
\newcommand{\MzX}{\mathcal{M}_{0}(X)}
\newcommand{\MpX}{\mathcal{M}_{+}(X)}
\newcommand{\CX}{C(X)}
\newcommand{\Hc}{H_{c}}
\newcommand{\Kmu}{K_{\mu}}
\newcommand{\Hmu}{H_{\mu}}
\newcommand{\fmu}{f_{\mu}}
\newcommand{\amu}{a_{\mu}}

\newcommand{\OTe}{\mathrm{OT}_{\eps}}
\newcommand{\Se}{S_{\eps}}
\newcommand{\KL}{\mathrm{KL}}

\DeclareMathOperator{\id}{id}

\begin{document}

\title[A Hilbert Sinkhorn Manifold]{The information geometry of the Sinkhorn divergence on a Hilbert manifold}

\author{Paul Navas}
\address{Department of Statistics, TU Dortmund University, Vogelpothsweg 78, 44221 Dortmund, Germany}
\email{paul.navas@tu-dortmund.de}
\urladdr{\url{https://orcid.org/0009-0005-6885-2108}}

%\dedicatory{\hfill Dedicado a MNR}

\keywords{Information Geometry, Sinkhorn Divergence, Entropic Optimal Transport, Hilbert Manifold, Weak Riemannian Metric, Reproducing Kernel Hilbert Space}

\begin{abstract}
The Sinkhorn divergence, the debiased entropic optimal transport cost, determines a metric tensor on probability measures defined along curves, due to Lavenant et al. We construct a real-analytic Hilbert manifold of measures, the smooth stratum, on which this tensor is a weak Riemannian metric, and we continue Eguchi's construction of a geometry from a divergence to the third order. The stratum consists of the measures with a positive Sobolev density on a closed Riemannian manifold, and the strata form a scale whose intersection is the Fr\'echet manifold of smooth positive densities of Ay. Following it, we characterize the metric through the gradients of mean values: its cotangent form is a spectral reweighting of the Dirichlet form of a Markov chain, so the metric is of the same kind as the Fisher-Rao and the Otto metric, and on the stratum it is bounded by a multiple of the Fisher-Rao metric (but not conversely). At third order, the Sinkhorn divergence is self-dual: both connection forms of Eguchi's construction coincide with the Levi-Civita form of the metric, and the cubic tensor vanishes, as a consequence of the transposition symmetry of the transport problem. On every stratum, the Levi-Civita form is represented, so the stratum has a real-analytic Levi-Civita connection, which on the Fr\'echet manifold is the connection of Ay and Schwachh\"ofer.
\end{abstract}

\maketitle

\input{sec1_intro}
\input{sec2}
\input{sec3}
\input{sec4}
\input{sec5}
\input{sec6}

\section*{Use of generative AI}
The author used Claude (Anthropic) to restructure, edit and polish the text. Most of the mathematical results of this work were derived when AI was still saying that 1+1=11. Nonetheless, it must be acknowledged that its assistance significantly aided in reducing the length and improving the quality of the presentation. Claude also proposed the argument behind Proposition~\ref{prop:nothessian} and checked it numerically, drafted the comparison with the Fisher-Rao metric on the stratum (Remark~\ref{rem:frsinkhorn}), the improved constants in Propositions~\ref{prop:operators} and~\ref{prop:sandwich} (the former from a Doeblin minorization that the author proposed following \citet[Proposition~19]{Vialard2019EntropicNotes}) and the expansions in Remark~\ref{rem:epslimits}, and assisted in writing up, restructuring and shortening the author's argument for the Levi-Civita connection on the stratum (Lemmas~\ref{lem:sobolevK} and~\ref{lem:potanalytic}, Proposition~\ref{prop:range}, Theorem~\ref{thm:levicivita}). The author checked all statements and proofs and takes full responsibility for the content.

\section*{Competing interests}
The author has no competing interests to declare that are relevant to the content of this article.

\appendix
\input{app_proofs}
\input{app_thirdvar}

\bibliographystyle{plainnat}
\bibliography{bibliography_sink}

\end{document}

%% file: sec1_intro.tex
\section{Introduction}\label{sec:intro}

On a manifold of probability measures, a divergence, that is, a smooth nonnegative function of two measures that vanishes only on the diagonal and whose second-order term in the expansion around the diagonal is positive definite, determines a geometry by the construction of \citet{Eguchi_1983,Eguchi_1992}: that second-order term is a metric tensor, and the third-order terms are the first-kind forms of a pair of affine connections, dual to each other with respect to the metric, whose difference is a cubic tensor. For the Kullback-Leibler divergence these are the Fisher-Rao metric, the exponential and the mixture connection, and a multiple of the Amari-Chentsov tensor \citep{AyJostLeSchwachhoefer2017}; for the squared Wasserstein distance, the second-order term is the Otto metric, which \citet{Ay2024Otto} turns into a dual structure, the exponential connection coming from the dual of the mixture connection, together with its canonical contrast function. The Sinkhorn divergence of \citet{Genevay2018SinkhornDivergences} and \citet{feydy_sejourné_vialard_amari_trouvé_peyré_interpolating_2019} is the entropically regularized optimal transport cost $\OTe$, with regularization parameter $\eps>0$, from which the two diagonal terms are subtracted,
\[
\Se(\mu,\nu)=\OTe(\mu,\nu)-\tfrac12\,\OTe(\mu,\mu)-\tfrac12\,\OTe(\nu,\nu).
\]
Under conditions on the cost (satisfied in the quadratic case), it is nonnegative, vanishes only on the diagonal and metrizes the weak-star convergence, and it interpolates between the optimal transport cost as $\eps\to0$ (the squared Wasserstein distance for the quadratic cost) and the squared maximum mean discrepancy of the kernel $-c/2$ as $\eps\to\infty$ \citep[Theorem~1]{feydy_sejourné_vialard_amari_trouvé_peyré_interpolating_2019}. \citet{lavenant_luckhardt_mordant_et_al_riemannian_2025} obtain its second-order term on the probability measures of a compact space, a metric tensor expressed through the Markov chain whose transition density is the density of the optimal coupling of a measure with itself (\autoref{subsec:gradients}, equation~\eqref{eq:Theta}), together with an embedding of the measures into the unit sphere of a reproducing kernel Hilbert space and the induced distance with its minimizing paths. In this paper we continue Eguchi's construction to the third order, on a Hilbert manifold of measures on which the metric tensor of Lavenant et al.\ is a weak Riemannian metric.

The metric and the distance of Lavenant et al.\ are defined along curves of measures, whereas connections require a differentiable structure, and the probability measures with the weak-star topology do not have one: no Banach manifold structure induces this topology, and the image of the sphere embedding is compact in the unit sphere of an infinite-dimensional Hilbert space (Remark~\ref{rem:nogo}). We obtain one in the way \citet{EbinMarsden1970} obtained Hilbert manifolds of diffeomorphisms in Sobolev classes: we work with the measures $\mu$ for which $\mu$ \emph{tilted by} $e^{\fmu/\eps}$, with $\fmu$ the Schr\"odinger potential of the self-transport problem, has a positive density $h$ in a Sobolev space $H^\sigma=H^\sigma(X)$ of the closed Riemannian manifold $X$, of order $\sigma$ above half the dimension so that its elements are continuous, restricting the regularity. In terms of $h$ the tilt and the operators of the chain are expressed through one fixed integral operator of the kernel (\autoref{subsec:operators} and \eqref{eq:faktorisierung}), whereas a density of $\mu$ determines the tilt only through a fixed-point equation. For kernels that raise the Sobolev order by a fixed amount, such as the Mat\'ern kernels on the torus, these measures form a real-analytic Hilbert manifold, the \emph{smooth stratum}, parametrized by a submanifold of codimension one of $H^\sigma$ on which the tilt and the operators of the chain depend analytically on the coordinates (Proposition~\ref{prop:stratum} and Corollary~\ref{cor:analytic}). The submanifold is defined by a single quadratic condition on $h$, which is at once the unit norm of the sphere embedding and the mass one of $\mu$. These are exactly the probability measures with a positive density in $H^\sigma$, so $h$ is a global analytic parametrization of the manifold of positive Sobolev densities, on which the density itself is a global chart, and over $\sigma$ the strata form a scale whose limit, as $C^\infty$ is the intersection of the Sobolev spaces, is the Fr\'echet manifold of smooth positive densities on which \citet{Ay2024Otto} works (Theorem~\ref{thm:scale}). In these coordinates the Sinkhorn metric is a bilinear form built from the integral operator of the kernel, the tilt and the resolvent $(\id-\Kmu)^{-1}$ on centered functions, where $\Kmu$ is the transition operator of the chain of Lavenant et al., which is reversible with invariant measure $\mu$ (\autoref{subsec:sinkhorn}). The flat map of the metric is injective and compact on every stratum, so the metric is a weak Riemannian metric in the sense of \citet{BauerBruverisMichor2014} there, and by restriction on Ay's manifold, where every Riemannian metric is weak (\autoref{subsec:stratummetric}).

Beside the second variation of the divergence, the metric admits a second characterization, following \citet{Ay2024Otto}, through the gradients it assigns to the mean values $\mu\mapsto\int_X f\,d\mu$ of continuous functions $f$. For the Sinkhorn metric these gradients exist if and only if $f$ belongs to a dense proper subspace of the continuous functions, for infinite $X$, and the map from $f$ to the gradient, the \emph{Riesz map} of the metric, is (on centered functions) the operator $\frac{2}{\eps}(\Kmu^{-1}-\Kmu)$ followed by multiplication with $\mu$ (\autoref{subsec:gradients}). Its cotangent form is the Dirichlet form of the chain reweighted by $(1+\lambda)/\lambda$ on the eigenvalues $\lambda$ of $\Kmu$, so the Sinkhorn metric is of the same kind as the Fisher-Rao and the Otto metric: the cotangent forms of all three are built from a process reversible for $\mu$, independent resampling from $\mu$ for Fisher-Rao, Brownian motion with drift $\nabla\log\rho$ for Otto, where $\rho$ is the density of $\mu$, and the Sinkhorn chain. On the stratum the two metrics are comparable in one direction only: the Sinkhorn metric is bounded by a multiple of the Fisher-Rao metric, with a constant that depends on the cost and on $\eps$ only, whereas the Fisher-Rao metric is not bounded by any multiple of the Sinkhorn metric (\autoref{subsec:stratummetric}). In the sphere coordinate of Lavenant et al.\ the metric is a multiple of the round metric of the unit sphere of the reproducing kernel Hilbert space plus a correction; we show that the correction is strictly positive on measures of full support, and we obtain the comparison of Lavenant et al.\ between the metric and the round metric with an improved constant, which comes from a Doeblin bound for the chain (\autoref{subsec:positivity}).

At third order the Sinkhorn divergence is self-dual: the two connection forms of Eguchi's construction coincide with each other and with the first-kind Levi-Civita form of the metric relative to the affine structure of the measures, and the cubic tensor vanishes (Theorem~\ref{thm:collapse}). On every smooth stratum the Levi-Civita form is \emph{represented}, i.e.\ for tangent vectors $u,v$ there is a tangent vector $\nabla_uv$ with $g(\nabla_uv,w)=\Gamma(u,v;w)$ for all $w$ (Remark~\ref{rem:sharp}), so the stratum has a Levi-Civita connection, real-analytic in the density chart, which on the Fr\'echet manifold of smooth positive densities is the connection of Ay and Schwachh\"ofer (Theorem~\ref{thm:levicivita}). The reason for the self-duality is simple: the transport problem is symmetric under the transposition $(x,y)\mapsto(y,x)$ of $X\times X$ and so is its value $\OTe$ in its two arguments; every symmetric divergence whose value is smooth along affine families is self-dual in this sense. A dual pair can instead be obtained from the metric, following \citet{Ay2024Otto} and \citet{AySchwachhoefer2025Torsion}, as the mixture connection and its dual with respect to the metric. The transposition symmetry does not make the metric a Hessian metric in the affine structure of the measures: on the torus, a computation in Fourier modes at the uniform measure shows that it is not one (Proposition~\ref{prop:nothessian}), so by \citet[Corollary~1]{AySchwachhoefer2025Torsion} the dual of the mixture connection is not torsion-free. In the second part of this work we study the geodesics and the curvature of the Sinkhorn metric and the torsion of this dual pair.

Like the non-parametric statistical manifolds of \citet{PistoneSempi1995}, \citet{Fukumizu2009ExponentialManifold} and \citet{newton_hilbertmanifold_jfa_2012,Newton2019Sobolev}, the smooth stratum is modelled on a function space, whereby it differs from them in the coordinate used. Pistone and Sempi, and Fukumizu with a reproducing kernel Hilbert space in place of their exponential Orlicz space, use the logarithm of the density as coordinate; Newton constructs Hilbert manifolds of measures and manifolds modelled on weighted Sobolev spaces, on which the Fisher-Rao metric is a weak Riemannian metric, as the Sinkhorn metric is on the stratum. The coordinate of the stratum is the density of the tilted measure, and its image under the integral operator of the kernel is the sphere coordinate of Lavenant et al., the counterpart of the square-root map through which \citet{KhesinLenellsMisiolekPreston2013} identify the Fisher-Rao geometry of smooth positive densities with that of the positive part of the sphere of smooth functions, with the $L^2$ metric; the difference is that the Sinkhorn metric in this coordinate is a multiple of the round metric plus a nonnegative correction, and to leading order as $\eps\to0$ the sphere coordinate becomes, after normalization, the square-root map (Remark~\ref{rem:epslimits}). In the framework of \citet{AyJostLeSchwachhoefer2017} the parametrization of the stratum is a parametrized measure model, so the Fisher metric and the Sinkhorn metric are defined on the same Hilbert manifold (Remark~\ref{rem:ajls}), the setting of the comparison in \autoref{subsec:stratummetric}.

On finite sets, \citet{AmariKarakidaOizumi2018} study the information geometry of entropic transport through the dually flat family of entropic plans, with a different divergence; for finitely supported measures whose atoms vary, \citet{MordantXu2026} prove that the Sinkhorn metric and all its derivatives converge as $\eps\to0$ to those of the Wasserstein metric of the atoms, and they list further applications of the metric.

\autoref{sec:prelim} and \autoref{subsec:lavmetric} recall the results from the literature that the paper uses: entropic transport and the Sinkhorn divergence, the operators of the self-transport problem, the sphere embedding and the metric of Lavenant et al. The results of this paper are in \autoref{sec:stratum}, with the proofs in Appendix~\ref{app:proofs-hilbert}, in Sections~\ref{subsec:gradients} to~\ref{subsec:stratummetric}, and in \autoref{sec:tensors} with Appendix~\ref{app:thirdvar}; \autoref{sec:discussion} discusses them together with open directions. The notation follows \citet{lavenant_luckhardt_mordant_et_al_riemannian_2025} wherever possible, and the regularization parameter $\eps>0$ is fixed throughout.

%% file: sec2.tex
\section{Entropic transport, the Sinkhorn divergence and the sphere embedding}\label{sec:prelim}

This section collects results from the lecture notes of \citet{Nutz2022EOTNotes} and the papers of \citet{feydy_sejourné_vialard_amari_trouvé_peyré_interpolating_2019} and \citet{lavenant_luckhardt_mordant_et_al_riemannian_2025} that the paper builds upon: the entropic transport problem with its dual, the Sinkhorn divergence, the reproducing kernel Hilbert space of the entropic kernel, the operators of the self-transport problem and the sphere embedding. The results are due to these authors and are stated with references to their proofs; the one addition is the constant $q$ of Proposition~\ref{prop:operators}, obtained from a Doeblin bound (\citet{Vialard2019EntropicNotes} obtains a constant of the same form for the Sinkhorn map).

\subsection{The entropic transport problem and the entropic kernel}\label{subsec:primal}

Let $X$ and $Y$ be compact metric spaces, let $\mu\in\mathcal P(X)$ and $\nu\in\mathcal P(Y)$ be probability measures on them, and let $c\in C(X\times Y)$ be a nonnegative continuous cost function. For a regularization parameter $\eps>0$, fixed throughout the paper, the entropically regularized optimal transport problem is
\begin{equation}\label{eq:eot-primal}
\OTe(\mu,\nu)
=
\inf_{\pi\in\Pi(\mu,\nu)}
\left\{
\int_{X\times Y} c(x,y)\,\pi(dx,dy)
+\eps\,\KL(\pi\mid\mu\otimes\nu)
\right\},
\end{equation}
where $\Pi(\mu,\nu)$ denotes the set of couplings of the marginals, that is, probability measures on $X\times Y$ whose first and second marginals are $\mu$ and $\nu$ respectively, and where the relative entropy of a probability measure $\pi$ with respect to a nonnegative finite measure $\rho$ is
\[
\KL(\pi\mid\rho)=\int\log\left(\frac{d\pi}{d\rho}\right)d\pi
\quad\text{if }\pi\ll\rho,
\qquad
\KL(\pi\mid\rho)=+\infty
\quad\text{otherwise.}
\]
The functional $\pi\mapsto\int c\,d\pi+\eps\,\KL(\pi\mid\mu\otimes\nu)$ is weak-star lower semicontinuous and strictly convex on the convex, weak-star compact set $\Pi(\mu,\nu)$, so the infimum in \eqref{eq:eot-primal} is attained, at a unique coupling $\pi_\eps$.

Write $k_c(x,y)=e^{-c(x,y)/\eps}$ for the \emph{entropic kernel}, a continuous function on $X\times Y$ with values in $(0,1]$.

\subsection{Duality and the Schr\"odinger potentials}\label{subsec:duality}

The problem \eqref{eq:eot-primal} admits an unconstrained concave dual. For $\mu\in\mathcal P(X)$ and $\nu\in\mathcal P(Y)$,
\begin{equation}\label{eq:eot-dual}
\begin{aligned}
\OTe(\mu,\nu)
=
\sup_{(f,g)\in C(X)\times C(Y)}
\Big\{
&\int_X f\,d\mu+\int_Y g\,d\nu\\
&-\eps\Big(\int_{X\times Y}e^{(f(x)+g(y)-c(x,y))/\eps}\,d(\mu\otimes\nu)-1\Big)
\Big\},
\end{aligned}
\end{equation}
without duality gap \citep[Theorem~4.7]{Nutz2022EOTNotes}; see also \citep{Leonard2014}.

Maximizers of \eqref{eq:eot-dual} exist and are unique $\mu$-a.e. and $\nu$-a.e. up to the shift $(f,g)\mapsto(f+\lambda,\,g-\lambda)$, $\lambda\in\mathbb R$ \citep[Theorem~4.2]{Nutz2022EOTNotes}; the \emph{Schr\"odinger potentials} $(f_{\mu,\nu},g_{\mu,\nu})$ are the continuous versions defined on all of $X$ and $Y$ by the Schr\"odinger system \eqref{eq:schroedingersystem} below \citep[Section~4.1]{Nutz2022EOTNotes}. At the optimum the exponential integral in \eqref{eq:eot-dual} equals one, so the value reduces to
\begin{equation}\label{eq:eot-value}
\OTe(\mu,\nu)=\int_X f_{\mu,\nu}\,d\mu+\int_Y g_{\mu,\nu}\,d\nu,
\end{equation}
and the unique optimal coupling of \eqref{eq:eot-primal} is the product measure \emph{tilted}, that is, multiplied, by the exponential factor of the potentials and the cost,
\begin{equation}\label{eq:eot-plan}
\pi_\eps=e^{(f_{\mu,\nu}(x)+g_{\mu,\nu}(y)-c(x,y))/\eps}\,(\mu\otimes\nu).
\end{equation}
The first-order conditions of \eqref{eq:eot-dual} are written with the map
\begin{equation}\label{eq:Teps}
T_\eps: C(X)\times\mathcal P(X)\to C(Y),
\qquad
T_\eps(f,\mu)(y)=-\eps\log\int_X e^{(f(x)-c(x,y))/\eps}\,d\mu(x),
\end{equation}
and likewise $T_\eps(g,\nu)\in C(X)$ for $g\in C(Y)$ and $\nu\in\mathcal P(Y)$, with the roles of $X$ and $Y$ exchanged. As $\eps\to0$, $T_\eps(f,\mu)(y)$ converges to the $c$-transform $\inf_{x\in\operatorname{supp}\mu}\left(c(x,y)-f(x)\right)$, and we call $T_\eps$ the \emph{entropic $c$-transform}. The objective $J(f,g)$ of \eqref{eq:eot-dual} satisfies $J(f+\lambda,g-\lambda)=J(f,g)$ for $\lambda\in\mathbb R$, and correspondingly $T_\eps(f+\lambda,\mu)=T_\eps(f,\mu)-\lambda$.

In terms of \eqref{eq:Teps}, the optimality conditions of \eqref{eq:eot-dual} are the \emph{Schr\"odinger system}
\begin{equation}\label{eq:schroedingersystem}
f_{\mu,\nu}=T_\eps(g_{\mu,\nu},\nu),
\qquad
g_{\mu,\nu}=T_\eps(f_{\mu,\nu},\mu).
\end{equation}
Since $T_\eps(f,\mu)$ depends on $f$ only through its $\mu$-a.e. class and is continuous, the system defines the potentials everywhere from the a.e. classes of the maximizers, and a continuous pair that solves it is unique up to the shift: for two such pairs, $f'=f+\lambda$ $\mu$-a.e. by the a.e. uniqueness, so $g'=T_\eps(f',\mu)=g-\lambda$ and $f'=T_\eps(g',\nu)=f+\lambda$ everywhere. Exponentiated, the two equations of \eqref{eq:schroedingersystem} state that the marginals of the tilt \eqref{eq:eot-plan} are $\mu$ and $\nu$.

\subsection{The Sinkhorn divergence and the self-transport problem}\label{subsec:sinkhorn}

From this point on the two marginal spaces coincide and the cost is symmetric, so that a measure can be compared with itself.

\begin{assumption}\label{assump:standing}
$X$ is a compact metric space; the cost $c\in C(X\times X)$ is symmetric, nonnegative, Lipschitz continuous,\footnote{The divergence properties of \autoref{subsec:sinkhorn} \citep[Theorem~1]{feydy_sejourné_vialard_amari_trouvé_peyré_interpolating_2019} use the Lipschitz continuity of $c$; the metric construction of \autoref{sec:metric} requires only continuity \citep{lavenant_luckhardt_mordant_et_al_riemannian_2025}.} and vanishes on the diagonal, $c(x,x)=0$ for all $x\in X$; and the entropic kernel $k_c=e^{-c/\eps}$ is strictly positive definite and universal on $X$: for every nonzero finite signed measure $\xi$ on $X$,
\[
\int_X\int_X k_c(x,y)\,d\xi(x)\,d\xi(y)>0,
\]
and the linear span of the functions $\left\{k_c(x,\cdot):x\in X\right\}$ is dense in $\left(C(X),\|\cdot\|_\infty\right)$.
\end{assumption}

The quadratic cost $c(x,y)=\|x-y\|^2$ on a compact subset of $\mathbb R^d$ satisfies all these conditions: its entropic kernel is the Gaussian kernel $e^{-\|x-y\|^2/\eps}$, which is strictly positive definite and universal \citep{feydy_sejourné_vialard_amari_trouvé_peyré_interpolating_2019}. On a compact space the two conditions on $k_c$ are equivalent \citep[Proposition~4 and Remark~3(a)]{SriperumbudurFukumizuLanckriet2011}.

The regularized cost $\OTe$ is not a divergence: $\OTe(\mu,\mu)>0$ in general, and $\nu\mapsto\OTe(\mu,\nu)$ is not minimized at $\nu=\mu$. \citet{Genevay2018SinkhornDivergences} remove the bias by subtracting the self-comparison terms, and call the result the \emph{Sinkhorn divergence},
\begin{equation}\label{eq:sinkhorndivergenz}
\Se(\mu,\nu)=\OTe(\mu,\nu)-\frac12\,\OTe(\mu,\mu)-\frac12\,\OTe(\nu,\nu);
\end{equation}
\citet{feydy_sejourné_vialard_amari_trouvé_peyré_interpolating_2019} analyze it: by construction $\Se(\mu,\mu)=0$, and by their Theorem~1, under Assumption~\ref{assump:standing} the Sinkhorn divergence $\Se:\PX\times\PX\to[0,\infty)$ is symmetric and nonnegative, with $\Se(\mu,\nu)=0$ if and only if $\mu=\nu$, convex in each of its arguments separately, and it metrizes the weak-star convergence of probability measures.

For the pair $(\mu,\mu)$ the Schr\"odinger system \eqref{eq:schroedingersystem} is invariant under the exchange of the two potentials, so by the uniqueness up to the shift exactly one of its continuous solutions has equal components, $f_{\mu,\mu}=g_{\mu,\mu}=:\fmu$, the \emph{self-potential} of $\mu$. It is the fixed point of the entropic $c$-transform,
\begin{equation}\label{eq:fixpunkt}
\fmu=T_\eps(\fmu,\mu),
\end{equation}
or equivalently, after exponentiating,
\begin{equation}\label{eq:selfnorm}
\int_X e^{(\fmu(x)+\fmu(y)-c(x,y))/\eps}\,d\mu(x)=1
\qquad\text{for all }y\in X.
\end{equation}
From the self-potential we form the \emph{tilt} $\amu:=e^{\fmu/\eps}$, a positive continuous function, and the \emph{self-kernel}
\begin{equation}\label{eq:selbstkern}
k_\mu(x,y):=e^{(\fmu(x)+\fmu(y)-c(x,y))/\eps}=\amu(x)\,k_c(x,y)\,\amu(y),
\end{equation}
a symmetric positive kernel which, by \eqref{eq:selfnorm}, satisfies $\int_X k_\mu(x,y)\,d\mu(x)=1$ for every $y$. So the self-kernel is a reversible Markov transition density with invariant measure $\mu$ \citep{lavenant_luckhardt_mordant_et_al_riemannian_2025}, and the optimal self-coupling $\pi_\mu=k_\mu\cdot(\mu\otimes\mu)$ of \eqref{eq:eot-plan} is the law of one step of the corresponding chain.

\subsection{The reproducing kernel Hilbert space of the entropic kernel}\label{subsec:rkhs}

Under Assumption~\ref{assump:standing}, $k_c:X\times X\to(0,1]$ is a continuous, symmetric and positive definite kernel on the compact space $X$, so by the Moore-Aronszajn theorem \citep[Chapter~1, Theorem~3]{BerlinetThomasAgnan2004} it is the reproducing kernel of a unique Hilbert space $\Hc$ of functions on $X$, which contains the sections $k_c(x,\cdot)$, $x\in X$, and in which
\[
\left\langle k_c(x,\cdot),k_c(y,\cdot)\right\rangle_{\Hc}=k_c(x,y),
\qquad
\varphi(x)=\left\langle\varphi,k_c(x,\cdot)\right\rangle_{\Hc}
\quad\text{for }\varphi\in\Hc.
\]
Since $k_c(x,x)=1$, the Cauchy-Schwarz inequality yields $|\varphi(x)|\le\|\varphi\|_{\Hc}$ for $\varphi\in\Hc$ and $x\in X$, so $\Hc$ consists of continuous functions and the inclusion $\Hc\to C(X)$ has norm at most one.

A finite signed measure $\xi\in\MX$ is mapped into $\Hc$ by integrating the sections:
\begin{equation}\label{eq:kme}
\Hc[\cdot]:\MX\to\Hc,
\qquad
\Hc[\xi]=\int_X k_c(x,\cdot)\,d\xi(x),
\end{equation}
pointwise $\Hc[\xi](y)=\int_X k_c(x,y)\,d\xi(x)$; this is the kernel mean embedding, and the symbol $\Hc$ denotes both the space and the map. The reproducing property yields, for every $\varphi\in\Hc$,
\begin{equation}\label{eq:kme-duality}
\left\langle\Hc[\xi],\varphi\right\rangle_{\Hc}
=\int_X\varphi\,d\xi,
\end{equation}
and for $\varphi=\Hc[\xi]$
\begin{equation}\label{eq:mmd-norm}
\left\|\Hc[\xi]\right\|_{\Hc}^{2}
=\int_X\int_X k_c(x,y)\,d\xi(x)\,d\xi(y),
\end{equation}
which for $\xi=\mu-\nu$ is the squared maximum mean discrepancy between $\mu$ and $\nu$ for the kernel $k_c$ \citep{RamdasGarciaTrillosCuturi2017,feydy_sejourné_vialard_amari_trouvé_peyré_interpolating_2019}. By the strict positive definiteness in Assumption~\ref{assump:standing} the right side of \eqref{eq:mmd-norm} is positive for $\xi\ne0$, so the kernel mean embedding is injective.

For $\mu\in\PX$ the self-kernel $k_\mu:X\times X\to(0,\infty)$ of \eqref{eq:selbstkern} is continuous, symmetric and positive definite, since $k_\mu(x,y)=\amu(x)\,k_c(x,y)\,\amu(y)$ with $\amu$ positive and continuous; it has the reproducing kernel Hilbert space $\Hmu$, and \eqref{eq:kme} with $k_\mu$ in place of $k_c$ defines the embedding $\Hmu[\cdot]:\MX\to\Hmu$. The factorization in \eqref{eq:selbstkern} identifies $\Hmu$ with $\Hc$:

\begin{lemma}[{\citep[Proposition~4.7]{lavenant_luckhardt_mordant_et_al_riemannian_2025}}]\label{lem:tilt-isometry}
For every $\mu\in\PX$ the map $\iota_\mu:\Hc\to\Hmu$, $\varphi\mapsto\amu\varphi$, is an isometric isomorphism.
\end{lemma}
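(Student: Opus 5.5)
The plan is to use the standard fact that multiplying a reproducing kernel by a positive function on both sides corresponds to multiplying the functions of its space by that function. Concretely, I will define the image space $\amu\Hc:=\{\amu\varphi:\varphi\in\Hc\}$ with the transported inner product
\[
\langle\amu\varphi,\amu\psi\rangle:=\langle\varphi,\psi\rangle_{\Hc}.
\]
I will then show that this is a Hilbert space of functions on $X$ with reproducing kernel $k_\mu$. The uniqueness part of the Moore-Aronszajn theorem then gives $\amu\Hc=\Hmu$ with equal inner products, and the map $\iota_\mu$ is, by construction, an isometry from $\Hc$ onto $\Hmu$.

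The key steps are as follows.

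\emph{Well-definedness and injectivity.} Since $\amu=e^{\fmu/\eps}$ is strictly positive and continuous, $\amu\varphi=0$ pointwise forces $\varphi=0$. Hence $\iota_\mu:\Hc\to\amu\Hc$ is a linear bijection, and the transported inner product is well defined. Completeness of $\amu\Hc$ is inherited from $\Hc$ through this bijection.

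\emph{The sections belong to the space.} By the factorization \eqref{eq:selbstkern},
\[
k_\mu(x,\cdot)=\amu(x)\,\amu(\cdot)\,k_c(x,\cdot)=\iota_\mu\big(\amu(x)k_c(x,\cdot)\big),
\]
so each section $k_\mu(x,\cdot)$ lies in $\amu\Hc$.

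\emph{The reproducing property.} For $\varphi\in\Hc$ I will compute
\[
\langle\amu\varphi,k_\mu(x,\cdot)\rangle=\amu(x)\langle\varphi,k_c(x,\cdot)\rangle_{\Hc}=\amu(x)\varphi(x)=(\amu\varphi)(x).
\]

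Finally, Moore-Aronszajn uniqueness identifies $(\amu\Hc,\langle\cdot,\cdot\rangle)$ with $\Hmu$. Alternatively, one can check directly that $\iota_\mu$ maps the span of $\{k_c(x,\cdot)\}$ isometrically onto the span of $\{k_\mu(x,\cdot)\}$: the Gram matrices agree after scaling by $\amu(x_i)\amu(x_j)$, since $k_\mu(x_i,x_j)=\amu(x_i)k_c(x_i,x_j)\amu(x_j)$. One then extends by density and uses the fact that norm convergence in either RKHS implies pointwise convergence, so the extension is still multiplication by $\amu$.

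The only point needing care is this last identification. In the density route, the limit of $\amu\varphi_n$ in $\Hmu$ has to equal $\amu\varphi$ pointwise. This follows from pointwise convergence in both spaces, since evaluation functionals are continuous in any RKHS. The rest is routine, and the argument uses only positivity and continuity of $\amu$, which hold for every $\mu\in\PX$ because $\fmu$ is continuous.
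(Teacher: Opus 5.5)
Your proof is correct. It is the standard argument behind the statement: the paper gives no proof of its own, but cites Lavenant et al.\ and notes that the factorization $k_\mu(x,y)=\amu(x)\,k_c(x,y)\,\amu(y)$ of \eqref{eq:selbstkern} identifies $\Hmu$ with $\Hc$, and your transported inner product together with the uniqueness part of the Moore-Aronszajn theorem makes exactly this rigorous.
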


By \eqref{eq:selfnorm}, $\Hmu[\mu]$ is the constant function $1$, and by \eqref{eq:mmd-norm} for the kernel $k_\mu$,
\[
\|1\|_{\Hmu}^{2}=\int_X\int_X k_\mu\,d\mu\,d\mu=1;
\]
Lemma~\ref{lem:tilt-isometry} then implies that $\amu^{-1}=e^{-\fmu/\eps}$ is a unit vector of $\Hc$, and the bound $|\varphi(x)|\le\|\varphi\|_{\Hc}$ yields $\fmu\ge0$ on $X$ for every $\mu\in\PX$ \citep[Remark~4.9]{lavenant_luckhardt_mordant_et_al_riemannian_2025}.

\subsection{The transition operator of the self-transport problem}\label{subsec:operators}

Fix $\mu\in\PX$. Composing the embedding $\Hmu[\cdot]$ of \autoref{subsec:rkhs} with the inclusion $\Hmu\to C(X)$, and restricting it to the measures $\varphi\mu$ with $\varphi\in C(X)$, yields the integral operators with kernel $k_\mu$ of \citet[Definition~3.3]{lavenant_luckhardt_mordant_et_al_riemannian_2025}, $\Hmu[\cdot]:\MX\to C(X)$ and $\Kmu:C(X)\to C(X)$,
\begin{equation}\label{eq:Kmu}
\Hmu[\nu](y)=\int_X k_\mu(x,y)\,d\nu(x),
\qquad
(\Kmu\varphi)(y)=\Hmu[\varphi\mu](y)=\int_X\varphi(x)\,k_\mu(x,y)\,d\mu(x).
\end{equation}
Both operators are compact as operators into $C(X)$ \citep[Proposition~3.6, proved in their Lemma~A.3]{lavenant_luckhardt_mordant_et_al_riemannian_2025}. By \eqref{eq:selfnorm}, $\Kmu 1=1$. So $\Kmu$ is the Markov operator of the reversible chain of \autoref{subsec:sinkhorn}, with transition density $k_\mu$ and invariant measure $\mu$.

% Schranke de Doeblin: k_mu >= k_c >= 1-q porque f_mu >= 0, mejor que la constante de Lavenant et al.
\begin{proposition}[{\citep[Propositions~3.8 and~4.2 and Theorems~3.9 and~4.3]{lavenant_luckhardt_mordant_et_al_riemannian_2025}}]\label{prop:operators}
Let Assumption~\ref{assump:standing} hold, let $\mu\in\PX$, and set $q:=1-e^{-\|c\|_\infty/\eps}\in(0,1)$. Then: (i) for all $\varphi,\psi\in\Hmu$, $\left\langle\Kmu\varphi,\psi\right\rangle_{\Hmu}=\left\langle\varphi,\psi\right\rangle_{L^2(\mu)}$; (ii) $\Kmu$ is self-adjoint, nonnegative and compact on $L^2(\mu)$ and on $\Hmu$, the eigenvalue $1$ is simple, with the constant functions as eigenspace, and the rest of the spectrum lies in $[0,q]$; (iii) $\id+\Kmu$ is boundedly invertible on $C(X)$, and on the mean-zero subspaces of $L^2(\mu)$ and $\Hmu$, for every integer $m\ge1$, as quadratic forms,
\[
\id\le\left(\id-\Kmu^{m}\right)^{-1}\le\left(1-q^{m}\right)^{-1}\id.
\]
\end{proposition}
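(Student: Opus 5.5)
Most of the proposition is quoted from Lavenant et al.; the one new ingredient is the constant $q$. So I plan to recall (i), (ii) without the bound $q$, and the invertibility of $\id+\Kmu$ from the cited results, and then derive the spectral bound $[0,q]$ and the inequalities in (iii) from a Doeblin minorization of the self-kernel.

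\textbf{Part (i) and the qualitative part of (ii).} For (i), I would take $\varphi=\Hmu[\alpha]$ and $\psi=\Hmu[\beta]$ in the dense span of kernel sections. By \eqref{eq:Kmu}, $\Kmu\varphi=\Hmu[\varphi\mu]$, and \eqref{eq:kme-duality} for $k_\mu$ gives $\langle \Hmu[\varphi\mu],\psi\rangle_{\Hmu}=\int\varphi\psi\,d\mu$. The identity then extends to all of $\Hmu$ by continuity. From (i), self-adjointness and nonnegativity on $\Hmu$ are immediate, since $\langle\Kmu\varphi,\varphi\rangle_{\Hmu}=\|\varphi\|_{L^2(\mu)}^2\ge0$. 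On $L^2(\mu)$ they follow from the symmetry of $k_\mu$ and its positive definiteness. Compactness follows from the continuity of the kernel on a compact space (Hilbert--Schmidt). Since $\Kmu1=1$ by \eqref{eq:selfnorm}, the constants form an eigenspace.

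\textbf{Doeblin bound (the step I expect to carry the weight).} Since $\fmu\ge0$ (Section~\ref{subsec:rkhs}) and $c\le\|c\|_\infty$, we have
\[
k_\mu(x,y)=\amu(x)k_c(x,y)\amu(y)\ge e^{-\|c\|_\infty/\eps}=1-q .
\]
So the transition kernel splits as $k_\mu=(1-q)+q\,r$, where $r\ge0$ and $\int r(x,y)\,d\mu(x)=1$. Then $\Kmu=(1-q)\Pi+qR$, where $\Pi\varphi=\int\varphi\,d\mu$ and $R$ is a Markov operator, symmetric with respect to $\mu$ because $r$ is symmetric. On mean-zero $\varphi\in L^2(\mu)$ we get $\Kmu\varphi=qR\varphi$, and $R$ is an $L^2(\mu)$-contraction by Jensen and invariance. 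Hence $\|\Kmu\varphi\|_{L^2(\mu)}\le q\|\varphi\|_{L^2(\mu)}$, and the spectrum of $\Kmu$ on the mean-zero subspace lies in $[0,q]$, using nonnegativity.

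\textbf{Transfer to $\Hmu$ and simplicity of the eigenvalue $1$.} Eigenfunctions of $\Kmu$ with nonzero eigenvalue lie in the range of $\Kmu$, hence in $\Hmu\cap C(X)$. Thus the nonzero spectra on $L^2(\mu)$ and $\Hmu$ coincide, with the same eigenfunctions. The mean-zero subspace of $\Hmu$ (the $\Hmu$-orthogonal complement of the constants, which by (i) equals the $L^2(\mu)$-mean-zero functions) is invariant. So the bound $q<1$ also gives simplicity of the eigenvalue $1$ on both spaces.

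\textbf{Part (iii).} On the mean-zero subspaces, $0\le\Kmu^m\le q^m$ by the spectral theorem for the compact self-adjoint nonnegative operator. Functional calculus then gives $1\le(1-t^m)^{-1}\le(1-q^m)^{-1}$ for $t\in[0,q]$, which is the claimed operator inequality. Invertibility of $\id+\Kmu$ on $C(X)$ follows from the Fredholm alternative for the compact operator $\Kmu$ on $C(X)$. Indeed, if $\varphi+\Kmu\varphi=0$ with $\varphi$ continuous, then $\varphi\in L^2(\mu)$ is an eigenfunction with eigenvalue $-1$, so $\varphi=0$ $\mu$-a.e. Then $\Kmu\varphi=0$ everywhere, so $\varphi=-\Kmu\varphi=0$.
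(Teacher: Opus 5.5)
Your proof is correct. It starts from the same Doeblin minorization as the paper, $k_\mu\ge k_c\ge 1-q$ (which follows from $\fmu\ge0$), but you extract a different contraction from it.

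\textbf{The paper's route.} It writes $k_\mu(\cdot,y)=(1-q)+q\,p_y$ with probability densities $p_y$ and derives the oscillation bound $|(\Kmu\varphi)(y)-(\Kmu\varphi)(y')|\le q(\sup\varphi-\inf\varphi)$. This is Proposition~3.8 of Lavenant et al.\ with the improved constant. The paper then does not redo (ii) and (iii); it lets the proofs of their Proposition~4.2 and Theorems~3.9 and~4.3 run with the new $q$.

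\textbf{Your route.} You use Jensen's inequality and the symmetry of the remainder kernel to get the Hilbert-space bound $\|\Kmu\varphi\|_{L^2(\mu)}\le q\|\varphi\|_{L^2(\mu)}$ on $L^2_0(\mu)$. You then finish yourself with the spectral theorem, the correspondence between eigenfunctions with nonzero eigenvalue on $L^2(\mu)$ and on $\Hmu$, and the Fredholm alternative on $C(X)$.

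\textbf{What each buys.} Your argument is self-contained and lands directly in the Hilbert spaces where (ii) and (iii) are stated, with no passage from a sup-norm contraction to a spectral bound. It is in fact the one-marginal case of the Jensen argument the paper itself uses later for $LL'$ in Lemma~\ref{lem:potanalytic}(ii). The paper's oscillation bound gives in addition a contraction on the Banach space $C(X)$ modulo constants, namely uniform geometric convergence of $\Kmu^{n}\varphi$ to $\int_X\varphi\,d\mu$. It also lets the cited proofs be reused verbatim.

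\textbf{Two small repairs.} First, in (i) no density argument is needed: \eqref{eq:kme-duality} for $k_\mu$ applies to $\xi=\varphi\mu$ for every $\varphi\in\Hmu$. As written, your extension by continuity presupposes the boundedness of $\Kmu$ on $\Hmu$, which you only derive from (i) afterwards. Second, the Hilbert--Schmidt argument gives compactness on $L^2(\mu)$ only. On $\Hmu$ you can use that (i) says $\Kmu=\iota^{\ast}\iota$ for the inclusion $\iota:\Hmu\to L^2(\mu)$. This inclusion is compact because $\Hmu$ embeds compactly into $C(X)$ by Arzel\`a--Ascoli.
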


Part (i) is \eqref{eq:kme-duality} for the kernel $k_\mu$. The constant $q$ comes from the pointwise bound $k_\mu\ge k_c\ge1-q$: since $\fmu\ge0$, $k_\mu(x,y)=\amu(x)\,k_c(x,y)\,\amu(y)\ge k_c(x,y)\ge e^{-\|c\|_\infty/\eps}$ for $x,y\in X$, so by \eqref{eq:selfnorm} the functions $p_y:=\left(k_\mu(\cdot,y)-(1-q)\right)/q$, $y\in X$, are probability densities with respect to $\mu$, and $(\Kmu\varphi)(y)-(\Kmu\varphi)(y')=q\int_X\varphi\,(p_y-p_{y'})\,d\mu$, which is at most $q\,(\sup\varphi-\inf\varphi)$ in absolute value. This Doeblin kernel minorization argument is the contraction of the oscillation in \citep[Proposition~3.8]{lavenant_luckhardt_mordant_et_al_riemannian_2025} with the constant $q$ above, and the proofs of \citep[Proposition~4.2 and Theorems~3.9 and~4.3]{lavenant_luckhardt_mordant_et_al_riemannian_2025} then yield (ii) and (iii). A contraction constant of the same form, $1-e^{-L\operatorname{diam}(\operatorname{supp}\nu)/\eps}$ for an $L$-Lipschitz cost and a marginal $\nu$, is obtained for the Sinkhorn map by \citet[Proposition~19]{Vialard2019EntropicNotes} from an $L^1$ bound between Gibbs measures \citep[Lemma~20]{Vialard2019EntropicNotes}. The constants of \citet{lavenant_luckhardt_mordant_et_al_riemannian_2025}, from bounds on ratios of values of the kernel, are $q=1-e^{-4\|c\|_\infty/\eps}$ \citep[Proposition~3.8]{lavenant_luckhardt_mordant_et_al_riemannian_2025} and $1-e^{-5\|c\|_\infty/(2\eps)}$ \citep[Remark~4.9]{lavenant_luckhardt_mordant_et_al_riemannian_2025}.

Since $1$ is an eigenvalue of $\Kmu$ with the constants as eigenspace, $\id-\Kmu^{m}$ is invertible only on a complement of the constants. We take the orthogonal complement in $L^2(\mu)$ and the projection onto it,
\begin{equation}\label{eq:meanzero}
L^2_0(\mu):=\left\{h\in L^2(\mu):\int_X h\,d\mu=0\right\},
\qquad
\Pi_0:L^2(\mu)\to L^2_0(\mu),
\quad
\Pi_0h:=h-\int_X h\,d\mu.
\end{equation}
From now on $\left(\id-\Kmu^{m}\right)^{-1}$ denotes the operator $\left(\id-\Kmu^{m}\right)^{-1}\Pi_0:L^2(\mu)\to L^2_0(\mu)$, which maps constants to zero; we call this the \emph{resolvent convention}. By Proposition~\ref{prop:operators}(iii), its restriction to $L^2_0(\mu)$ is bounded, self-adjoint and bounded below by the identity. It maps $C(X)$ into $C(X)$: $\left(\id-\Kmu^{m}\right)g=\Pi_0h$ yields $g=\Pi_0h+\Kmu^{m}g$, and $\Kmu$ maps $L^2(\mu)$ into $C(X)$. Identities between continuous functions, such as $\left(\id-\Kmu^{m}\right)g=\Pi_0h$ or $\Pi_0f=\Kmu\psi$, are identities in $C(X)$, on all of $X$ (pointwise).\footnote{When $\mu$ has full support they are equivalent to the same identities in $L^2(\mu)$, but in general they are stronger, since a continuous function that vanishes $\mu$-a.e. need not vanish off the support of $\mu$.}

\subsection{\texorpdfstring{The sphere embedding into $\Hc$}{The sphere embedding into H\_c}}\label{subsec:sphere}

The tilt $\amu=e^{\fmu/\eps}$ of \autoref{subsec:sinkhorn} and the unit vector $\amu^{-1}=e^{-\fmu/\eps}$ of $\Hc$ define two maps on $\PX$,
\begin{equation}\label{eq:Bmap}
A:\PX\to\MpX,\quad A(\mu)=\amu\mu,
\qquad
B:\PX\to\Hc,\quad B(\mu)=e^{-\fmu/\eps},
\end{equation}
the \emph{tilted measure} and the \emph{sphere embedding}. Multiplying \eqref{eq:selfnorm} by $e^{-\fmu(y)/\eps}$ yields $B(\mu)=\Hc[A(\mu)]$, the kernel mean embedding of the tilted measure, and $\mu=B(\mu)\,A(\mu)$ as measures, $B(\mu)$ being the function $\amu^{-1}$. For $w\in\Hc$ the pairing in \eqref{eq:kme-duality} yields
\begin{equation}\label{eq:tangency}
\left\langle B(\mu),w\right\rangle_{\Hc}=\int_X w\,dA(\mu)=\int_X\amu\,w\,d\mu ,
\end{equation}
so the tangent space $B(\mu)^{\perp}=\{w\in\Hc:\langle B(\mu),w\rangle_{\Hc}=0\}$ of the unit sphere of $\Hc$ at $B(\mu)$ consists of the $w$ for which $\amu w$ has $\mu$-mean zero; the velocity of a curve in the image of $B$ that is differentiable in $\Hc$ lies in it, since $\|B(\mu)\|_{\Hc}=1$ (\autoref{subsec:rkhs}). \citet{lavenant_luckhardt_mordant_et_al_riemannian_2025} use $B$ as a coordinate on all of $\PX$:

\begin{theorem}[{\citep[Theorem~4.8]{lavenant_luckhardt_mordant_et_al_riemannian_2025}}]\label{thm:sphere}
Let Assumption~\ref{assump:standing} hold. Then $B$ is injective, and it is a homeomorphism from $\PX$ with the weak-star topology onto its image $\mathcal B:=B(\PX)$ with the norm topology of $\Hc$; in particular $\mathcal B$ is norm compact.
\end{theorem}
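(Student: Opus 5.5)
The plan has three steps: recover $\mu$ from $B(\mu)$ to get injectivity, prove continuity of $B$ by a compactness-and-uniqueness argument for the self-potentials, and then finish with the standard compact-to-Hausdorff argument.

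\emph{Injectivity.} We use the identities of \autoref{subsec:sphere}. Suppose $B(\mu)=B(\nu)$. Then $\fmu=f_\nu$ pointwise, so $a_\mu=a_\nu=:a$, and hence
\[
\Hc[a\mu]=B(\mu)=B(\nu)=\Hc[a\nu].
\]
The kernel mean embedding is injective on $\MX$ by the strict positive definiteness in Assumption~\ref{assump:standing} (\autoref{subsec:rkhs}). Therefore $a\mu=a\nu$ as measures. Since $a$ is positive and continuous, multiplying by $a^{-1}$ gives $\mu=\nu$.

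\emph{Continuity of $B$ from the weak-star topology to the norm of $\Hc$.} Let $\mu_n\to\mu$ weak-star and write $f_n:=f_{\mu_n}$.

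First, the family $\{f_n\}$ is uniformly bounded and equicontinuous. By \eqref{eq:fixpunkt},
\[
f_n(y)=-\eps\log\int_X e^{(f_n(x)-c(x,y))/\eps}\,d\mu_n(x).
\]
The right side inherits the modulus of continuity of $c$ in $y$, uniformly in $n$, because a log-sum-exp of functions with a common modulus has that modulus. For the bound, $f_n\ge0$ by \autoref{subsec:rkhs}. For an upper bound, apply \eqref{eq:selfnorm} at $y=x_0$, where $x_0$ is a maximizer of $f_n$:
\[
1\ge e^{(2\min f_n-\|c\|_\infty)/\eps}\,e^{(\max f_n-\min f_n)/\eps}.
\]
Together with the oscillation bound $\max f_n-\min f_n\le\|c\|_\infty$, which follows from the Lipschitz-type control of $T_\eps$, this gives $\min f_n\le\|c\|_\infty$ and hence $\max f_n\le 2\|c\|_\infty$.

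By Arzel\`a--Ascoli, every subsequence has a further subsequence with $f_n\to f$ uniformly. Passing to the limit in \eqref{eq:selfnorm} is legitimate: the integrand converges uniformly in $(x,y)$, and $\mu_n\to\mu$ weak-star. So $f$ is a continuous solution of \eqref{eq:fixpunkt} for $\mu$. The continuous symmetric solution of the self-transport Schr\"odinger system is unique (\autoref{subsec:sinkhorn}), hence $f=\fmu$. Since every subsequence has a further subsequence with this same limit, the whole sequence satisfies $f_n\to\fmu$ uniformly.

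It remains to upgrade uniform convergence to convergence in $\Hc$. Here we use $B(\nu)=\Hc[A(\nu)]$ and \eqref{eq:mmd-norm}:
\[
\|B(\mu_n)-B(\mu)\|_{\Hc}^2=\iint k_c\,d\xi_n\,d\xi_n,\qquad \xi_n:=a_{\mu_n}\mu_n-\amu\mu .
\]
Since $a_{\mu_n}\to\amu$ uniformly and $\mu_n\to\mu$ weak-star, we get $\xi_n\to0$ weak-star. Then $\xi_n\otimes\xi_n\to0$ weak-star on $X\times X$, and the masses of $\xi_n$ stay bounded. Because $k_c$ is continuous on the compact space $X\times X$, the double integral tends to $0$. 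I expect this continuity step to be the main obstacle, specifically the uniform bounds and the use of uniqueness to identify the limit of the potentials.

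\emph{Conclusion.} $\PX$ is weak-star compact (Prokhorov/Banach--Alaoglu on compact $X$), and $B$ is a continuous injection into the Hausdorff space $\Hc$. A continuous bijection from a compact space onto a Hausdorff space is a homeomorphism, so $B$ is a homeomorphism onto its image $\mathcal B$. Finally, $\mathcal B$ is the continuous image of a compact set, so it is norm compact.
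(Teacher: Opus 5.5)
Your proof is correct. The injectivity step is the argument the paper sketches after the theorem: $B(\mu)=\Hc[A(\mu)]$ recovers $A(\mu)$ by the injectivity of the kernel mean embedding, and then $\mu=B(\mu)\,A(\mu)$.

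For the topological part the paper gives no proof of its own. It refers to Lavenant et al., who identify $\mathcal B$ with the unit vectors of $\Hc[\MpX]$ and invert $B$ explicitly. By the identities of \autoref{subsec:sphere}, $B^{-1}(\beta)$ is the measure $\Hc^{-1}[\beta]$ multiplied by the function $\beta$.

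You take a different route. You prove continuity of $B$ by an Arzel\`a--Ascoli argument on the self-potentials and identify the limit through the uniqueness of the symmetric continuous solution of the Schr\"odinger system. You then get continuity of $B^{-1}$ from the weak-star compactness of $\PX$. Your route is self-contained and uses only facts already recorded in \autoref{sec:prelim}. The price is that it says nothing about which unit vectors of $\Hc$ lie in $\mathcal B$, nor how to invert $B$ concretely; the explicit description provides exactly that.

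Three small points:
\begin{enumerate}
\item Since you argue with sequences throughout, you should say that the weak-star topology on $\PX$ is metrizable ($X$ is compact metric, so $\CX$ is separable). Sequential continuity is then continuity.
\item Your upper bound is roundabout. The displayed inequality already gives $\min f_n+\max f_n\le\|c\|_\infty$, hence $\max f_n\le\|c\|_\infty$ by $f_n\ge0$, without the oscillation bound.
\item The last step can be shortened. By \eqref{eq:tangency} and $\|B(\cdot)\|_{\Hc}=1$, one has $\|B(\mu_n)-B(\mu)\|_{\Hc}^{2}=2-2\int_X a_{\mu_n}\,a_\mu^{-1}\,d\mu_n$. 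This tends to $0$ once $f_{\mu_n}\to\fmu$ uniformly, which avoids the argument with $\xi_n\otimes\xi_n$.
\end{enumerate}
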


Injectivity comes from the identity $\mu=B(\mu)\,A(\mu)$ together with the injectivity of the kernel mean embedding, which recovers $A(\mu)$ from $B(\mu)$; Lavenant et al.\ identify $\mathcal B$ with the unit vectors of $\Hc[\MpX]$ and invert $B$ explicitly.

\begin{example}[Dirac masses]\label{ex:dirac}
For $x\in X$ and $f=c(x,\cdot)$, \eqref{eq:Teps} yields $T_\eps(f,\delta_x)=c(x,\cdot)-f(x)=f$, since $f(x)=c(x,x)=0$, so $(f,f)$ solves the Schr\"odinger system \eqref{eq:schroedingersystem} for $(\delta_x,\delta_x)$, and $f_{\delta_x}=c(x,\cdot)$, $A(\delta_x)=\delta_x$ and $B(\delta_x)=k_c(x,\cdot)=\Hc[\delta_x]$: the sphere embedding maps the Dirac masses to the sections of the kernel.
\end{example}

%% file: sec3.tex
\section{The smooth stratum}\label{sec:stratum}

This paper builds the information geometry of the Sinkhorn divergence, its metric and its connections, on a manifold of probability measures. That manifold is the \emph{smooth stratum} we describe in this section, a Hilbert manifold of measures with Sobolev-regular densities, on which the connections act as operators on vector fields and define geodesic equations. Both require a differentiable structure, which the weak-star topology of $\PX$ we have seen so far does not admit.

\begin{remark}[No Banach manifold structure for the weak-star topology]\label{rem:nogo}
For infinite $X$, no Banach manifold structure on $\PX$ induces the weak-star topology: $\PX$ is compact (hence locally compact) and convex (hence connected), so by Riesz's theorem, such a manifold is modelled on some $\mathbb R^n$, and by invariance of domain it cannot contain the simplex $\Delta^{n+1}$, whereas $\lambda\mapsto\sum_{i=0}^{n+1}\lambda_i\delta_{x_i}$ embeds $\Delta^{n+1}$ homeomorphically for distinct $x_0,\dots,x_{n+1}\in X$. By Theorem~\ref{thm:sphere} the same holds for $\mathcal B$, which is norm compact and therefore has empty interior in the unit sphere of the infinite-dimensional space $\Hc$.
\end{remark}

The smooth stratum consists of the measures with a positive density of Sobolev regularity $\sigma$, and its topology is strictly finer than the weak-star topology. For $\sigma'>\sigma$ the stratum of regularity $\sigma'$ is contained in that of regularity $\sigma$ (Theorem~\ref{thm:scale}), and the intersection of all strata is the Fr\'echet manifold of smooth positive densities on which \citet{Ay2024Otto} works.\footnote{The strata are nested and all of infinite dimension, unlike the strata of a stratified space, which are disjoint manifolds of different dimensions.} We construct the differentiable structure by the approach of \citet{EbinMarsden1970}, who obtained Hilbert manifolds of diffeomorphisms by restricting to Sobolev regularity \citep[Section~7]{BauerBruverisMichor2014}, and we take as coordinate the density of the \textit{tilted measure} $\amu\mu$: if it has a density $h$ with respect to a fixed reference measure, the tilt and the operators of \autoref{subsec:operators} are expressed through $h$ and one fixed linear operator (Proposition~\ref{prop:stratum}), whereas a density of $\mu$ determines the tilt only through the fixed-point equation \eqref{eq:fixpunkt}. On the resulting Hilbert manifold the metric of \autoref{sec:metric} is a weak Riemannian metric (\autoref{subsec:stratummetric}): the map from tangent to cotangent spaces is injective but not surjective \citep[Section~2]{BauerBruverisMichor2014}.

From now on, wherever the smooth stratum is used, $X$ is, in addition to Assumption~\ref{assump:standing}, assumed to be a connected closed Riemannian manifold of dimension $d$ with Riemannian volume $\mathrm{vol}$, and $\mu_0\in\PX$ has a smooth positive density $\rho_0=d\mu_0/d\mathrm{vol}$. We write $\langle\cdot,\cdot\rangle_0$ for the inner product of $L^2(\mu_0)$. The regularity of a measure is expressed on the Sobolev scale $H^r(X)$, $r\ge0$: a finite measure has \emph{Sobolev regularity} $r$ if it equals $h\mu_0$ with $h\in H^r(X)$. Since multiplication by a smooth positive function is an isomorphism of $H^r(X)$, this class does not depend on the choice of $\mu_0$. For $r>d/2$ the space $H^r(X)$ embeds continuously into $\CX$, so its positive functions form an open subset, which we use in Proposition~\ref{prop:stratum}. The coordinates $h$ of the stratum will lie in $H^\sigma(X)$ with $\sigma>d/2$. The kernel mean embedding of a measure $h\mu_0$ is $\Hc[h\mu_0]=Sh$ with the integral operator
\begin{equation}\label{eq:Sop}
S:L^2(\mu_0)\to\CX,
\qquad
(Sh)(y)=\int_X k_c(x,y)\,h(x)\,d\mu_0(x).
\end{equation}
On $L^2(\mu_0)$ it is bounded and self-adjoint, since $k_c$ is continuous and symmetric, and it is nonnegative, since by \eqref{eq:mmd-norm}
\[
\langle h,Sh\rangle_0=\int_X\int_X k_c(x,y)\,h(x)\,h(y)\,d\mu_0(x)\,d\mu_0(y)=\left\|\Hc[h\mu_0]\right\|_{\Hc}^{2}.
\]

On the smooth stratum, the coordinate of a measure is the density of its tilted measure. For a positive $h\in\CX$ let
\begin{equation}\label{eq:hchart}
\mu_h:=h\,(Sh)\,\mu_0,
\end{equation}
a measure with a positive continuous density and total mass $\mu_h(X)=\langle h,Sh\rangle_0$. The constraint $\langle h,Sh\rangle_0=1$ normalizes $\mu_h$ and also implies by \eqref{eq:mmd-norm} that $Sh=\Hc[h\mu_0]$, which Proposition~\ref{prop:stratum}(ii) identifies with $B(\mu_h)$, is of unit norm.

\begin{assumption}[smooth regime]\label{assump:stratum}
There are exponents $s>d/2$ and $\sigma>d/2$ such that (M1) $S:H^r(X)\to H^{r+2s}(X)$ is an isomorphism for every $r\in[0,\sigma]$.
\end{assumption}

By (M1), $S$ raises Sobolev regularity by exactly $2s$ for every order $r\in[0,\sigma]$. Since $S$ is the integral operator of $k_c$ with respect to $\mathrm{vol}$ composed with multiplication by $\rho_0$, an isomorphism of each $H^r(X)$, condition (M1) depends only on the kernel and not on the choice of $\mu_0$. For every $\sigma>d/2$, we also use the fact (M2) that $H^\sigma(X)$ is a Banach algebra under pointwise multiplication, and $g\mapsto1/g$ is real-analytic on $\left\{g\in H^\sigma(X):g>0\right\}$, since by the Moser estimates \citep[Chapter~13, Section~3]{Taylor2011PDE3} the positive functions are invertible elements of $H^\sigma(X)$, and inversion is real-analytic on the invertible elements of a Banach algebra.

The running example is the flat torus $X=\mathbb T^d=\mathbb R^d/\mathbb Z^d$ with the Mat\'ern kernel \citep{BorovitskiyTereninMostowskyDeisenroth2020}; on $\mathbb R^d$ the Mat\'ern kernels range from the exponential kernel $e^{-\kappa|x-y|}$, whose cost is $\eps\kappa|x-y|$, to the Gaussian kernel of the quadratic cost in the limit of infinite smoothness \citep[Section~4.2.1]{RasmussenWilliams2006}, and on the flat torus, where the Laplacian is diagonal in the Fourier basis, convolution with a Mat\'ern kernel is a Fourier multiplier, so that (M1) reduces to \textit{the decay of its Fourier coefficients}. For $\kappa>0$ and $s\ge(d+1)/2$, let $k_c(x,y)=G(x-y)/G(0)$, where $G$ is the Green's function of $(\kappa^2-\Delta)^s$ on $\mathbb T^d$, with Fourier coefficients $\hat G(n)=(\kappa^2+4\pi^2|n|^2)^{-s}$, $n\in\mathbb Z^d$. The function $G$ is positive, as a superposition of heat kernels. Since all $\hat G(n)$ are positive, $\int_X\int_X k_c\,d\nu\,d\nu=G(0)^{-1}\sum_n\hat G(n)|\hat\nu(n)|^2>0$ for every nonzero $\nu\in\MX$; the norm of $\Hc$ is $\|\varphi\|_{\Hc}^{2}=G(0)\sum_n(\kappa^2+4\pi^2|n|^2)^s|\hat\varphi(n)|^2$, so $\Hc=H^s(\mathbb T^d)$ with an equivalent norm \citep[Chapter~1, Remark~7.6]{LionsMagenes1972}, and it contains the trigonometric polynomials. Hence $k_c$ is strictly positive definite and universal. The cost $c(x,y)=\phi(x-y)$, with $\phi:=-\eps\log(G/G(0))$, is symmetric, and it is nonnegative and vanishes exactly on the diagonal, since $G(0)-G(z)=\sum_n\hat G(n)\left(1-\cos2\pi n\cdot z\right)$ is nonnegative and vanishes only at $z=0$ in $\mathbb T^d$. It is Lipschitz, since $G$ is Lipschitz and bounded below by a positive constant: for $s>(d+1)/2$ the Fourier series of $G$ can be differentiated termwise, and for $s=(d+1)/2$, $G$ is a positive multiple of $\sum_{m\in\mathbb Z^d}e^{-\kappa|z+m|}$ by the Poisson summation formula and \citep[Chapter~I, Theorem~1.14]{SteinWeiss1971}. Assumption~\ref{assump:standing} therefore holds. For (M1), $Sh=G\ast(h\rho_0)/G(0)$, and convolution with $G$ is an isomorphism from $H^r$ onto $H^{r+2s}$, so (M1) holds for every $\sigma$.

The quadratic cost lies outside the smooth regime: on the torus its Gaussian kernel becomes the normalized heat kernel, whose Fourier coefficients $e^{-4\pi^2t|n|^2}$ decay faster than any power, so $S$ maps $L^2(\mu_0)$ into $C^\infty(\mathbb T^d)$ but not onto any Sobolev space, and (M1) fails for every $s$.
%\footnote{Everything from Proposition~\ref{prop:stratum} to Theorem~\ref{thm:scale} use only that $S$ maps $H^r(X)$ into $H^{r+2s}(X)$, so the strata remain manifolds; the surjectivity in (M1) yields the Sobolev description of $\Hc$, \eqref{eq:sobolevskala}, and of the range of the flat map (Proposition~\ref{prop:range}), without which the Levi-Civita form of \autoref{sec:tensors} need not be represented.} 

\autoref{subsec:stratummetric} evaluates the metric on a velocity $v$ of the stratum through the norm $\|Sv\|_{\Hc}=\|S^{1/2}v\|_0$, which follows from \eqref{eq:mmd-norm} and is weaker than the norm of $H^\sigma(X)$, which is what makes the metric weak. The following lemma identifies $\Hc$ with the range of $S^{1/2}$.

\begin{lemma}\label{lem:douglas}
Let $\iota:\Hc\to L^2(\mu_0)$, $\varphi\mapsto\varphi$, be the inclusion and $\iota^{\ast}:L^2(\mu_0)\to\Hc$ its adjoint. Then $\iota^{\ast}\varphi=\Hc[\varphi\mu_0]=S\varphi$ for every $\varphi\in L^2(\mu_0)$, so $S=\iota\,\iota^{\ast}$ as an operator on $L^2(\mu_0)$. Moreover $S$ is injective, $\operatorname{ran}S^{1/2}=\Hc$, and $S^{1/2}:L^2(\mu_0)\to\Hc$ is an isometric isomorphism.
\end{lemma}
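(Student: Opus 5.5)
The plan is to compute the adjoint directly from the reproducing property, then read off injectivity and the range statement from the factorization $S=\iota\,\iota^{\ast}$.

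\emph{Adjoint.} First, $\iota$ is bounded: since $|\varphi(x)|\le\|\varphi\|_{\Hc}$ (\autoref{subsec:rkhs}) and $\mu_0$ is a probability measure, $\|\varphi\|_0\le\|\varphi\|_{\Hc}$. For $\varphi\in L^2(\mu_0)$ the signed measure $\varphi\mu_0$ is finite, because $L^2(\mu_0)\subset L^1(\mu_0)$. Hence $\Hc[\varphi\mu_0]\in\Hc$ is defined by \eqref{eq:kme}. By \eqref{eq:kme-duality}, for every $\psi\in\Hc$,
\[
\langle\Hc[\varphi\mu_0],\psi\rangle_{\Hc}=\int_X\psi\,\varphi\,d\mu_0=\langle\varphi,\iota\psi\rangle_0 .
\]
So $\iota^{\ast}\varphi=\Hc[\varphi\mu_0]$. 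Comparing pointwise with \eqref{eq:Sop} gives $\Hc[\varphi\mu_0]=S\varphi$, and therefore $S=\iota\iota^{\ast}$ on $L^2(\mu_0)$.

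\emph{Injectivity.} If $S\varphi=0$, then $\langle\varphi,S\varphi\rangle_0=\|\Hc[\varphi\mu_0]\|_{\Hc}^2=0$. By the strict positive definiteness in Assumption~\ref{assump:standing}, the measure $\varphi\mu_0$ vanishes, so $\varphi=0$ $\mu_0$-a.e. The same argument shows that $\iota^{\ast}$ is injective. Moreover $\iota$ is injective, since $\mu_0$ has a positive continuous density and hence full support, so a continuous function vanishing $\mu_0$-a.e.\ vanishes everywhere.

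\emph{Range of $S^{1/2}$.} I would use the polar decomposition of $\iota^{\ast}$, or equivalently Douglas' lemma. Since
\[
\|\iota^{\ast}\varphi\|_{\Hc}^2=\langle\iota\iota^{\ast}\varphi,\varphi\rangle_0=\langle S\varphi,\varphi\rangle_0=\|S^{1/2}\varphi\|_0^2,
\]
the map $U:S^{1/2}\varphi\mapsto\iota^{\ast}\varphi$ is a well-defined isometry from $\operatorname{ran}S^{1/2}$ into $\Hc$. It extends to $\overline{\operatorname{ran}S^{1/2}}=(\ker S)^{\perp}=L^2(\mu_0)$. Its range is closed and contains $\operatorname{ran}\iota^{\ast}$, which is dense in $\Hc$ because $\ker\iota=0$. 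So $U:L^2(\mu_0)\to\Hc$ is unitary, with $\iota^{\ast}=US^{1/2}$.

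Taking adjoints gives $\iota=S^{1/2}U^{\ast}$, hence $\Hc=\operatorname{ran}\iota=\operatorname{ran}S^{1/2}$ as sets of functions. For $\psi\in\Hc$ we have $\psi=S^{1/2}U^{\ast}\psi$, so $\|\psi\|_{\Hc}=\|U^{\ast}\psi\|_0$. Since $S^{1/2}$ is injective, $U^{\ast}\psi$ is the unique preimage of $\psi$ under $S^{1/2}$. Thus $S^{1/2}:L^2(\mu_0)\to\Hc$ is a bijection that preserves norms, i.e.\ an isometric isomorphism.

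\emph{Main obstacle.} The delicate step is this last identification. It must be read with $S^{1/2}$ taken as the $L^2$ square root, mapping into $L^2(\mu_0)$, and with its range identified with $\Hc$ as function classes. The identification $\iota=S^{1/2}U^{\ast}$ is exactly what handles this, and I would be careful about a.e.\ classes versus continuous representatives there.
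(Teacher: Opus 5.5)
Your proposal is correct and follows essentially the paper's route: both use the isometry $S^{1/2}\varphi\mapsto\iota^{\ast}\varphi=S\varphi$ from $\operatorname{ran}S^{1/2}$ into $\Hc$, extend it to all of $L^2(\mu_0)$ by density (from injectivity of $S$), and get range $\Hc$ because $\operatorname{ran}\iota^{\ast}$ is dense (from injectivity of $\iota$). The only difference is cosmetic: the paper identifies the extension with $S^{1/2}$ directly, since it equals $S^{1/2}$ on the dense set $\operatorname{ran}S^{1/2}$ and $\Hc$ embeds continuously into $L^2(\mu_0)$, whereas you reach the same identification $S^{1/2}=\iota U$ by taking adjoints in $\iota^{\ast}=US^{1/2}$.
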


\newcommand{\PrfDouglas}{%
By the reproducing property, $\langle\iota^{\ast}\varphi,k_c(y,\cdot)\rangle_{\Hc}=\langle\varphi,k_c(y,\cdot)\rangle_0=(S\varphi)(y)$ for $y\in X$, so $\iota^{\ast}\varphi=S\varphi$, which is $\Hc[\varphi\mu_0]$ by \eqref{eq:kme}, and $S=\iota\,\iota^{\ast}$. By \eqref{eq:mmd-norm}, $\langle\varphi,S\varphi\rangle_0=\|\Hc[\varphi\mu_0]\|_{\Hc}^{2}$, which is positive for $\varphi\ne0$ by the strict positive definiteness in Assumption~\ref{assump:standing}, so $S$ is injective. For $\psi\in L^2(\mu_0)$, $\|S\psi\|_{\Hc}^{2}=\langle\psi,\iota\,\iota^{\ast}\psi\rangle_0=\|S^{1/2}\psi\|_0^{2}$, so the map $S^{1/2}\psi\mapsto S\psi=S^{1/2}(S^{1/2}\psi)$ is isometric from $\operatorname{ran}S^{1/2}$, which is dense in $L^2(\mu_0)$ since $S$ is injective, into $\Hc$. Its continuous extension to $L^2(\mu_0)$ is $S^{1/2}$, since $\Hc$ embeds continuously into $L^2(\mu_0)$. Its range is closed and contains $\operatorname{ran}\iota^{\ast}$, which is dense in $\Hc$ since $\iota$ is injective, so it is $\Hc$.
}
\iflong
\begin{proof}
\PrfDouglas
\end{proof}
\else
The proof is in Appendix~\ref{app:proofs-hilbert}.
\fi

In an orthonormal basis $(e_n)$ of $L^2(\mu_0)$ of eigenfunctions of the compact operator $S$, with eigenvalues $\lambda_n>0$, Lemma~\ref{lem:douglas} is the classical description $\Hc=\left\{\sum_na_ne_n:\sum_na_n^2/\lambda_n<\infty\right\}$ of Mercer's theorem. By (M1) for $r=0$, $S$ is an isomorphism of $L^2(\mu_0)$ onto $H^{2s}(X)$, so $\operatorname{dom}S^{-1}=\operatorname{ran}S=H^{2s}(X)$, the graph norm of $S^{-1}$ is equivalent to the norm of $H^{2s}(X)$, and
\begin{equation}\label{eq:sobolevskala}
H^{2s}(X)=\operatorname{ran}S\subset\Hc=\operatorname{ran}S^{1/2}\subset L^2(\mu_0).
\end{equation}
Since $S^{-1}$ is a positive self-adjoint operator on $L^2(\mu_0)$ with domain $H^{2s}(X)$, the interpolation space of order $1/2$ between $L^2(\mu_0)$ and $H^{2s}(X)$ is the domain of $S^{-1/2}$ \citep[Chapter~1, Definition~2.1 and Remark~2.3]{LionsMagenes1972}, and by the interpolation of Sobolev spaces on the closed manifold $X$ \citep[Chapter~1, Theorem~7.7 and Remark~7.6]{LionsMagenes1972} it is $H^s(X)$:
\[
\Hc=\operatorname{dom}S^{-1/2}=\left[H^{2s}(X),L^2(\mu_0)\right]_{1/2}=H^{s}(X)
\]
with equivalent norms. For $v\in L^2(\mu_0)$ and $\varphi\in\Hc$, $\langle v,\varphi\rangle_0=\langle S^{1/2}v,S^{-1/2}\varphi\rangle_0$, with equality in the Cauchy-Schwarz inequality at $\varphi=Sv$, so $\|S^{1/2}v\|_0$ is the norm of $v$ as a functional on $\Hc$; it is therefore equivalent to the norm of $H^{-s}(X)$, the dual of $H^s(X)$ with respect to $L^2(\mu_0)$ \citep[Chapter~1, Theorem~7.6]{LionsMagenes1972}, in which \autoref{subsec:stratummetric} measures velocities on the stratum.

\begin{definition}\label{def:stratum}
Let Assumption~\ref{assump:stratum} hold. The \emph{smooth stratum} is
\[
\mathcal P^\sigma(X):=\left\{\mu\in\PX:A(\mu)=h\mu_0\text{ for some positive }h\in H^\sigma(X)\right\},
\]
the set of probability measures whose tilted measure has a positive density of Sobolev regularity $\sigma$. Its parameter set is
\begin{equation}\label{eq:parametermenge}
U_\sigma:=\left\{h\in H^\sigma(X):h>0\text{ on }X,\quad\langle h,Sh\rangle_0=1\right\},
\end{equation}
with the map $U_\sigma\to\PX$, $h\mapsto\mu_h$, of \eqref{eq:hchart}.
\end{definition}

\begin{proposition}[Parametrization of the smooth stratum]\label{prop:stratum}
Let Assumptions~\ref{assump:standing} and~\ref{assump:stratum} hold. (i) $U_\sigma$ is a real-analytic submanifold of $H^\sigma(X)$ of codimension one, with tangent space $T_hU_\sigma=\left\{v\in H^\sigma(X):\langle v,Sh\rangle_0=0\right\}$. (ii) For $h\in U_\sigma$, the self-potential of $\mu_h$ is $f_{\mu_h}=-\eps\log(Sh)$; consequently $a_{\mu_h}=1/(Sh)$, $A(\mu_h)=h\mu_0$ and $B(\mu_h)=Sh$. Conversely, if $\mu\in\PX$ and $A(\mu)=h\mu_0$ with $h\in H^\sigma(X)$ positive, then $h\in U_\sigma$ and $\mu=\mu_h$. Hence the map $U_\sigma\to\mathcal P^\sigma(X)$, $h\mapsto\mu_h$, is a bijection. (iii) For $h\in U_\sigma$ and $v\in H^\sigma(X)$, $\langle B(\mu_h),Sv\rangle_{\Hc}=\int_X a_{\mu_h}\,Sv\,d\mu_h=\langle v,Sh\rangle_0$, so $T_hU_\sigma$ consists of the $v$ with $Sv$ in the tangent space $B(\mu_h)^{\perp}$ of the unit sphere; a curve in $U_\sigma$ differentiable in $H^\sigma(X)$ with velocity $v$ has a sphere image differentiable in $\Hc$ with velocity $Sv$.
\end{proposition}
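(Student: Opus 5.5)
For part (i) we apply the regular value theorem in the Banach space $H^\sigma(X)$. The set $O:=\{h\in H^\sigma(X):h>0\}$ is open, because $H^\sigma(X)\hookrightarrow\CX$ for $\sigma>d/2$. Consider the quadratic function
\[
Q(h)=\langle h,Sh\rangle_0 .
\]
It is a continuous quadratic form on $H^\sigma(X)$, since $S$ is bounded on $L^2(\mu_0)$ and $H^\sigma\hookrightarrow L^2$. Hence it is real-analytic, with derivative $DQ(h)v=2\langle v,Sh\rangle_0$, using the self-adjointness of $S$. At $h\in U_\sigma$ we have $DQ(h)h=2Q(h)=2\neq0$, so $DQ(h)$ is a surjective functional. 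Its kernel is closed of codimension one and has the complement $\mathbb R h$. The analytic implicit function theorem therefore gives that $U_\sigma=O\cap Q^{-1}(1)$ is a real-analytic submanifold of codimension one, with tangent space $\ker DQ(h)=\{v:\langle v,Sh\rangle_0=0\}$.

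For part (ii), fix $h\in U_\sigma$. Since $k_c>0$ and $h>0$ is continuous, $Sh$ is positive and continuous. Set $f:=-\eps\log(Sh)\in\CX$. We verify the normalization \eqref{eq:selfnorm} for $\mu_h$ directly:
\[
\int_X e^{(f(x)+f(y)-c(x,y))/\eps}\,d\mu_h(x)=\frac{1}{Sh(y)}\int_X\frac{k_c(x,y)}{Sh(x)}\,h(x)\,Sh(x)\,d\mu_0(x)=\frac{Sh(y)}{Sh(y)}=1 .
\]
Also $\mu_h(X)=Q(h)=1$, so $\mu_h\in\PX$. This identity is the fixed-point equation \eqref{eq:fixpunkt}, so $(f,f)$ is a continuous solution of the Schr\"odinger system for $(\mu_h,\mu_h)$ with equal components. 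By the uniqueness statement of \autoref{subsec:sinkhorn}, $f=f_{\mu_h}$. It follows that $a_{\mu_h}=1/Sh$, $A(\mu_h)=a_{\mu_h}\mu_h=h\mu_0$, and $B(\mu_h)=Sh$.

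For the converse of (ii), suppose $A(\mu)=h\mu_0$ with $h\in H^\sigma$ positive. By \autoref{subsec:sphere},
\[
B(\mu)=\Hc[A(\mu)]=\Hc[h\mu_0]=Sh \quad\text{and}\quad \mu=B(\mu)A(\mu)=h\,(Sh)\,\mu_0=\mu_h .
\]
Normalization then gives $Q(h)=\mu_h(X)=1$, so $h\in U_\sigma$. For bijectivity:
\begin{itemize}
\item surjectivity onto $\mathcal P^\sigma(X)$ is exactly this converse;
\item injectivity follows because $\mu_h=\mu_{h'}$ gives $h\mu_0=A(\mu_h)=A(\mu_{h'})=h'\mu_0$, hence $h=h'$ as continuous functions. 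Here $\mu_0$ has full support.
\end{itemize}

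For part (iii), take $v\in H^\sigma$. Then $Sv=\Hc[v\mu_0]\in\Hc$ by Lemma~\ref{lem:douglas}. By \eqref{eq:tangency} and (ii),
\[
\langle B(\mu_h),Sv\rangle_{\Hc}=\int a_{\mu_h}Sv\,d\mu_h=\int Sv\;h\,d\mu_0=\langle v,Sh\rangle_0 ,
\]
using the self-adjointness of $S$. By (i), the $v$ making this vanish are exactly the tangent vectors. For a curve $h_t$ that is $C^1$ in $H^\sigma$, the map $t\mapsto Sh_t$ is differentiable in $\Hc$ with derivative $S\dot h_t$, because $S=S^{1/2}S^{1/2}$ is bounded from $L^2(\mu_0)$ into $\Hc$ (Lemma~\ref{lem:douglas}) and the inclusion $H^\sigma\hookrightarrow L^2$ is continuous.

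The main obstacle is the uniqueness step in (ii). One must make sure that a continuous, everywhere-defined solution of \eqref{eq:selfnorm} is the self-potential, which holds because the uniqueness up to shift was stated for continuous solutions of the system. The equal-components requirement then fixes the shift to zero. Note that (M1) is not needed here: only boundedness of $S$ and the embedding into $\CX$ are used.
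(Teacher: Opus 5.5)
Your proof is correct and follows the paper's argument step by step: the regular-value argument for the quadratic form $\langle h,Sh\rangle_0$ in (i), the direct verification of \eqref{eq:selfnorm} together with uniqueness of the symmetric self-potential in (ii), and boundedness of $S$ into $\Hc$ via Lemma~\ref{lem:douglas} in (iii). The only deviation is in the converse of (ii). There you obtain $\langle h,Sh\rangle_0=1$ from $\mu(X)=1$, whereas the paper obtains it from $\|B(\mu)\|_{\Hc}=1$. The two are equivalent, since this constraint is at once the mass one of $\mu_h$ and the unit norm of $Sh$.
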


\newcommand{\PrfStratum}{%
(i) The map $N:H^\sigma(X)\to\mathbb R$, $N(h)=\langle h,Sh\rangle_0$, is a continuous quadratic form, hence real-analytic, and since $S$ is self-adjoint on $L^2(\mu_0)$, its derivative is $DN(h)v=2\langle v,Sh\rangle_0$. At $h\in U_\sigma$, $DN(h)h=2N(h)=2$, so $DN(h)\ne0$, and the level set $\{N=1\}$ is regular on the set of positive functions, which is open in $H^\sigma(X)$. By the analytic implicit function theorem, $U_\sigma$ is a real-analytic submanifold of codimension one, with $T_hU_\sigma=\ker DN(h)$.

(ii) Set $f:=-\eps\log(Sh)$, which is continuous since $Sh$ is continuous and positive. Then $a:=e^{f/\eps}=1/(Sh)$, and for every $y\in X$, since $d\mu_h=h\,(Sh)\,d\mu_0$,
\[
\begin{aligned}
\int_X e^{(f(x)+f(y)-c(x,y))/\eps}\,d\mu_h(x)
&=a(y)\int_X k_c(x,y)\,a(x)\,h(x)\,(Sh)(x)\,d\mu_0(x)\\
&=a(y)\,(Sh)(y)=1.
\end{aligned}
\]
This is \eqref{eq:selfnorm}, so $(f,f)$ solves the Schr\"odinger system \eqref{eq:schroedingersystem} for $(\mu_h,\mu_h)$ with equal components, and $f_{\mu_h}=f$ by \autoref{subsec:sinkhorn}, as in Example~\ref{ex:dirac}. Hence $A(\mu_h)=a\,\mu_h=h\mu_0$ and $B(\mu_h)=e^{-f/\eps}=Sh$. Conversely, let $A(\mu)=h\mu_0$ with $h\in H^\sigma(X)$ positive. Then $B(\mu)=\Hc[A(\mu)]=Sh$ (\autoref{subsec:sphere}), so $\langle h,Sh\rangle_0=\|Sh\|_{\Hc}^{2}=\|B(\mu)\|_{\Hc}^{2}=1$ by \eqref{eq:mmd-norm} and \autoref{subsec:rkhs}, that is, $h\in U_\sigma$, and $\mu=B(\mu)\,A(\mu)=(Sh)\,h\,\mu_0=\mu_h$. The map $h\mapsto\mu_h$ is injective, since $\mu_h=\mu_{h'}$ implies $h\mu_0=A(\mu_h)=h'\mu_0$, hence $h=h'$, both being continuous and $\mu_0$ having full support.

(iii) By (ii), \eqref{eq:tangency} and $d\mu_h=h\,(Sh)\,d\mu_0$, $\langle B(\mu_h),Sv\rangle_{\Hc}=\int_X a_{\mu_h}\,Sv\,d\mu_h=\int_X(Sv)\,h\,d\mu_0=\langle v,Sh\rangle_0$, which vanishes exactly on $T_hU_\sigma$ by (i). By Lemma~\ref{lem:douglas}, $S=S^{1/2}S^{1/2}$ is bounded from $L^2(\mu_0)$ into $\Hc$, and $H^\sigma(X)$ embeds continuously into $L^2(\mu_0)$, so $\tau\mapsto Sh_\tau=B(\mu_{h_\tau})$ is differentiable in $\Hc$ with derivative $Sv$.
}
\iflong
\begin{proof}
\PrfStratum
\end{proof}
\else
The proof is in Appendix~\ref{app:proofs-hilbert}.
\fi

For $h\in U_\sigma$, with $a=a_{\mu_h}=1/(Sh)$ and $d\mu_h=h\,(Sh)\,d\mu_0$, the self-kernel \eqref{eq:selbstkern} satisfies $k_{\mu_h}(x,y)\,h(x)\,(Sh)(x)=a(y)\,k_c(x,y)\,h(x)$, so the transition operator \eqref{eq:Kmu} factorizes as
\begin{equation}\label{eq:faktorisierung}
K_{\mu_h}=M_a\,S\,M_h,
\end{equation}
where $M_g:\varphi\mapsto g\varphi$ denotes multiplication by $g$. We give $\mathcal P^\sigma(X)$ the topology that makes $h\mapsto\mu_h$ a homeomorphism from $U_\sigma$. It is finer than the weak-star topology, since $\mu_h=B^{-1}(Sh)$, where $h\mapsto Sh$ is continuous from $H^\sigma(X)$ into $\Hc$ and $B^{-1}$ is continuous by Theorem~\ref{thm:sphere}. It is strictly finer because $S:H^\sigma(X)\to\Hc$ is compact: for $h\in U_\sigma$, an orthonormal sequence $(w_n)$ in $T_hU_\sigma$ and small $t>0$, the functions $h_n:=(h+tw_n)/\langle h+tw_n,S(h+tw_n)\rangle_0^{1/2}$ lie in $U_\sigma$, and $\mu_{h_n}\to\mu_h$ weak-star while $\|h_n-h\|_{H^\sigma}\to t$.

By the following corollary, the metric of \autoref{sec:metric}, written in the coordinate $h$ in \autoref{subsec:stratummetric} through the tilt, the resolvents, $S$ and the inner product $\langle\varphi,h\,(Sh)\,\psi\rangle_0$ of $L^2(\mu_h)$, is real-analytic in $h$. The isomorphism property of (M1) also fixes the Sobolev order of the metric: it identifies $\Hc$ with $H^{s}(X)$, \eqref{eq:sobolevskala}, and makes the norm of the metric on the stratum equivalent to the norm of $H^{-s}(X)$ (Proposition~\ref{prop:stratumflat}(ii)).

\begin{corollary}[Analytic dependence on the coordinate]\label{cor:analytic}
Let Assumptions~\ref{assump:standing} and~\ref{assump:stratum} hold, let $m\ge1$ be an integer, and let $\mathcal L(L^2(\mu_0))$ be the Banach algebra of bounded operators on $L^2(\mu_0)$. Then the maps $U_\sigma\to H^\sigma(X)$, $h\mapsto a_{\mu_h}$, and $U_\sigma\to\mathcal L(L^2(\mu_0))$, $h\mapsto K_{\mu_h}$, $h\mapsto(\id+K_{\mu_h})^{-1}$, $h\mapsto(\id-K_{\mu_h}^{m})^{-1}$, are real-analytic, where the last operator is taken in the resolvent convention of \autoref{subsec:operators}.
\end{corollary}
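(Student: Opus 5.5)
The plan is to prove everything on the open set $\mathcal O:=\{h\in H^\sigma(X):h>0\}$ of $H^\sigma(X)$, using the explicit formulas $a=1/(Sh)$ and \eqref{eq:faktorisierung}, which make sense for every $h\in\mathcal O$. Real-analyticity on $U_\sigma$ then follows by restricting to the real-analytic submanifold of Proposition~\ref{prop:stratum}(i). Throughout, I use that sums, compositions and bounded multilinear maps of real-analytic maps between Banach spaces are real-analytic, and that inversion is real-analytic on the open set of invertible elements of a Banach algebra.

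\emph{Step 1 (the tilt).} By (M1) with $r=\sigma$, $S$ maps $H^\sigma(X)$ boundedly into $H^{\sigma+2s}(X)$, hence into $H^\sigma(X)$. So $h\mapsto Sh$ is bounded linear on $H^\sigma(X)$. For $h\in\mathcal O$ the function $Sh$ is positive, since $k_c>0$ and $h>0$. By (M2), $g\mapsto1/g$ is real-analytic on the positive functions of $H^\sigma(X)$, so $h\mapsto a_{\mu_h}=1/(Sh)$ is real-analytic into $H^\sigma(X)$ by Proposition~\ref{prop:stratum}(ii).

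\emph{Step 2 (the transition operator).} Since $H^\sigma(X)\hookrightarrow\CX$ continuously, the map $g\mapsto M_g$ is bounded linear from $H^\sigma(X)$ into $\mathcal L(L^2(\mu_0))$, with $\|M_g\|\le\|g\|_\infty\le C\|g\|_{H^\sigma}$. Then $h\mapsto K_{\mu_h}=M_{a}\,S\,M_h$ is a bounded trilinear expression in the analytic maps $h\mapsto M_{a_{\mu_h}}$, the constant $S$, and $h\mapsto M_h$, and is therefore real-analytic.

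\emph{Step 3 (the inverses).} The measures $\mu_h$ and $\mu_0$ have mutual densities $h\,Sh$ and $1/(h\,Sh)$, both continuous and positive. Hence $L^2(\mu_h)$ and $L^2(\mu_0)$ are the same space with equivalent norms, and invertibility of an operator does not depend on which of the two norms is used.

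For $(\id+K_{\mu_h})^{-1}$: by Proposition~\ref{prop:operators}(ii), $K_{\mu_h}$ is nonnegative and self-adjoint on $L^2(\mu_h)$, so $\id+K_{\mu_h}$ is invertible there and hence in $\mathcal L(L^2(\mu_0))$. Composing the analytic map of Step 2 with inversion gives analyticity.

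For the resolvent convention I introduce
\[
P_h\varphi:=\langle\varphi,h\,Sh\rangle_0\,1=\Big(\int_X\varphi\,d\mu_{h}\Big)1 ,
\]
which is analytic in $h$ as a bounded bilinear expression. On $U_\sigma$ it is a projection, since $\langle1,hSh\rangle_0=1$, and $\Pi_0=\id-P_h$.

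The key identity is $P_hK=KP_h=P_h$, where $K=K_{\mu_h}$. Indeed $K1=1$, and by invariance of $\mu_h$, $\int K\varphi\,d\mu_h=\int\varphi\,d\mu_h$. Set $Q_h:=\id-K^m+P_h$. With respect to the decomposition $L^2=\mathbb R1\oplus L^2_0(\mu_h)$, which $K$ preserves, $Q_h$ acts as the identity on the constants and as $\id-K^m$ on $L^2_0(\mu_h)$. The latter is invertible by Proposition~\ref{prop:operators}(iii). Hence $Q_h$ is invertible, and
\[
(\id-K^m)^{-1}\Pi_0=Q_h^{-1}-P_h .
\]
The right side is analytic in $h$, as inversion composed with an analytic map minus an analytic map.

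The main obstacle is Step 3. Its invertibility statements must hold in the fixed algebra $\mathcal L(L^2(\mu_0))$, while Proposition~\ref{prop:operators} is formulated in the $h$-dependent space $L^2(\mu_h)$. Moreover the convention operator involves the $h$-dependent projection $\Pi_0$, and it is not itself an inverse. The equivalence of norms and the augmented operator $Q_h$ are the two devices that resolve this. Analyticity is local, so only pointwise invertibility of $Q_h$ at each $h$ is needed, not uniform bounds.
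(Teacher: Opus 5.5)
Your proposal is correct and follows the paper's proof essentially step by step. You obtain the tilt from (M1), (M2) and $a=1/(Sh)$, and the transition operator from the factorization \eqref{eq:faktorisierung} together with the bounded map $g\mapsto M_g$. For the resolvent you use the equivalence of the $L^2(\mu_h)$ and $L^2(\mu_0)$ norms, the augmented operator $\id-K^m+P_h$, and the identity $(\id-K^m)^{-1}\Pi_0=(\id-K^m+P_h)^{-1}-P_h$. Your remarks on working on the open set of positive functions and on the openness of the invertible elements only make explicit what the paper leaves implicit.
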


\newcommand{\PrfAnalytic}{%
By (M1), $S$ maps $H^\sigma(X)$ boundedly into $H^{\sigma+2s}(X)\subset H^\sigma(X)$, and $Sh>0$ for $h>0$; so by (M2) and Proposition~\ref{prop:stratum}(ii), $h\mapsto a_{\mu_h}=1/(Sh)$ is real-analytic. The map $H^\sigma(X)\to\mathcal L(L^2(\mu_0))$, $g\mapsto M_g$, is linear and bounded, since $\|M_g\|\le\|g\|_\infty\le C\|g\|_{H^\sigma}$, so $h\mapsto K_{\mu_h}=M_{a_{\mu_h}}\,S\,M_h$ is real-analytic by \eqref{eq:faktorisierung}. Since $d\mu_h=h\,(Sh)\,d\mu_0$ with a density bounded above and below, $L^2(\mu_h)$ and $L^2(\mu_0)$ coincide with equivalent norms; by Proposition~\ref{prop:operators}(ii), $\id+K_{\mu_h}$ is therefore invertible in $\mathcal L(L^2(\mu_0))$, and inversion is real-analytic on the invertible elements of a Banach algebra. For the last map, let $P_h\varphi:=\langle\varphi,h\,Sh\rangle_0$, viewed as a constant function; this is the projection onto the constants along $L^2_0(\mu_h)$, and it depends analytically on $h$. The operator $\id-K_{\mu_h}^{m}+P_h$ acts as the identity on the constants and as $\id-K_{\mu_h}^{m}$ on $L^2_0(\mu_h)$, which $K_{\mu_h}$ preserves; so it is invertible by Proposition~\ref{prop:operators}(ii), and $(\id-K_{\mu_h}^{m})^{-1}=(\id-K_{\mu_h}^{m}+P_h)^{-1}-P_h$ is real-analytic.
}
\iflong
\begin{proof}
\PrfAnalytic
\end{proof}
\else
The proof is in Appendix~\ref{app:proofs-hilbert}.
\fi

A measure of the smooth stratum with coordinate $h$ has the density $\rho_0\,h\,(Sh)$ with respect to $\mathrm{vol}$, and the map
\[
\Phi_\sigma:U_\sigma\to H^\sigma(X),\qquad\Phi_\sigma(h)=\rho_0\,h\,(Sh),
\]
from coordinates to densities is real-analytic by (M1) and (M2). The following theorem identifies the smooth stratum with the manifold of positive probability densities of Sobolev regularity $\sigma$, on which $h$ is a global real-analytic parametrization and the density a global chart, and it brings the Sobolev scale of the strata together with the Fr\'echet setting of \citet{Ay2024Otto}: the strata are nested, a measure has the same coordinate $h$ on every stratum, the charts of the smaller strata are restrictions of those of the larger ones, and the intersection of the strata is Ay's manifold $\mathcal P^\infty_+(X)$ of smooth positive densities, as a set and as a Fr\'echet manifold, just as $C^\infty(X)$ is the intersection of the spaces $H^\sigma(X)$, with the Fr\'echet topology defined by their norms. Banach calculus is thus available on every stratum. For maps between open subsets of Fr\'echet spaces and their submanifolds we use the \emph{convenient calculus} of \citet{KrieglMichor1997}, the setting of \citet{Ay2024Otto}: a map is smooth if it maps smooth curves to smooth curves.

% C^infty como interseccion de los H^sigma, igual que en Ay; la coordenada h es la misma en todos los niveles
\begin{theorem}[The strata as a Sobolev scale with limit $\mathcal P^\infty_+(X)$]\label{thm:scale}
Let Assumptions~\ref{assump:standing} and~\ref{assump:stratum} hold.
(i) $\mathcal P^\sigma(X)=\left\{\rho\,\mathrm{vol}:\rho\in H^\sigma(X),\ \rho>0,\ \int_X\rho\,d\mathrm{vol}=1\right\}$, and $\Phi_\sigma$ is a real-analytic diffeomorphism from $U_\sigma$ onto the set of these densities, an open subset of the closed affine hyperplane $\left\{\rho\in H^\sigma(X):\int_X\rho\,d\mathrm{vol}=1\right\}$ of $H^\sigma(X)$.
(ii) If $\sigma'>\sigma$ and Assumption~\ref{assump:stratum} holds for $\sigma'$ in place of $\sigma$, then $\mathcal P^{\sigma'}(X)\subset\mathcal P^\sigma(X)$ and $U_{\sigma'}\subset U_\sigma$, a measure in $\mathcal P^{\sigma'}(X)$ has the same coordinate on both levels, $\Phi_{\sigma'}$ is the restriction of $\Phi_\sigma$, and the inclusion $U_{\sigma'}\to U_\sigma$ is real-analytic.
(iii) If Assumption~\ref{assump:stratum} holds for every $\sigma>d/2$, then
\[
\bigcap_{\sigma>d/2}\mathcal P^\sigma(X)=\mathcal P^\infty_+(X):=\left\{\rho\,\mathrm{vol}:\rho\in C^\infty(X),\ \rho>0,\ \int_X\rho\,d\mathrm{vol}=1\right\},
\]
the manifold of \citet{Ay2024Otto}; the set $U_\infty:=\bigcap_{\sigma>d/2}U_\sigma=\left\{h\in C^\infty(X):h>0,\ \langle h,Sh\rangle_0=1\right\}$ is a submanifold of codimension one of $C^\infty(X)$; and the common restriction $\Phi$ of the maps $\Phi_\sigma$ is a diffeomorphism from $U_\infty$ onto the densities of $\mathcal P^\infty_+(X)$, an open subset of a closed affine hyperplane of $C^\infty(X)$.
\end{theorem}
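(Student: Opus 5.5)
The plan is to reduce everything to inverting $\Phi_\sigma$. Given a density $\rho$ of the target set, the inverse must solve $\rho_0\,h\,(Sh)=\rho$. I will solve this through the self-potential rather than through an inverse function theorem. Set $\mu=\rho\,\mathrm{vol}$ and take its tilt $a_\mu$. I aim to show that $a_\mu\in H^\sigma(X)$; then $h:=a_\mu\rho/\rho_0$ is a positive element of $H^\sigma(X)$ by (M2), with $A(\mu)=h\mu_0$. Proposition~\ref{prop:stratum}(ii) then gives $h\in U_\sigma$ and $\mu=\mu_h$, so $\Phi_\sigma$ is onto that set. The regularity of the tilt comes from a bootstrap on the fixed point. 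By \eqref{eq:selfnorm}, $1/a_\mu=\Hc[a_\mu\mu]=S(a_\mu\rho/\rho_0)$. Since $a_\mu$ is continuous, $a_\mu\rho/\rho_0\in L^2$, so (M1) gives $1/a_\mu\in H^{2s}$ and, by (M2), $a_\mu\in H^{\min(2s,\sigma)}$. Repeating the step raises the order by $2s$ each time until it reaches $\sigma$. The multiplication step at intermediate orders $r<\sigma$ requires that $H^r\cdot H^\sigma\subset H^r$ and that $1/g\in H^r$ for positive $g\in H^r$ with $r>d/2$. Both hold since $2s>d$ already places us above $d/2$ after one step.

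The converse inclusion is immediate: for $h\in U_\sigma$ the density $\rho_0 h(Sh)$ lies in $H^\sigma$, is positive, and has mass $\langle h,Sh\rangle_0=1$. So $\Phi_\sigma$ is a bijection, and injectivity follows from Proposition~\ref{prop:stratum}(ii). The target is open in the hyperplane because $H^\sigma\subset C(X)$. Analyticity of $\Phi_\sigma$ is already stated. For the inverse, I will use the analytic inverse function theorem on the extended map $\tilde\Phi(h)=\rho_0 h(Sh)$ from the open set of positive functions of $H^\sigma$ into $H^\sigma$. Since $\tilde\Phi$ maps $U_\sigma$ into the hyperplane and $\tilde\Phi(th)=t^2\tilde\Phi(h)$, it suffices to show that $D\tilde\Phi(h)v=\rho_0(v\,Sh+h\,Sv)$ is an isomorphism of $H^\sigma$. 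Dividing by $\rho_0\,Sh$ turns this into $\id+M_{h/Sh}S$ on $H^\sigma$, which after conjugation is $(\id+K_{\mu_h})$ acting with $M_a S M_h=K_{\mu_h}$ and the conjugating factors $a=1/Sh$. This operator is compact, because $S$ gains $2s$ derivatives. It is also injective: if $v+M_aSM_hv=0$, then $w=v/a$ satisfies $w+K_{\mu_h}w=0$ up to these factors, and $K_{\mu_h}\ge0$ on $L^2(\mu_h)$ by Proposition~\ref{prop:operators}(ii) forces $w=0$. Fredholm alternative then gives the isomorphism. I expect the main obstacle to be checking this conjugation carefully and showing that injectivity in $L^2$ transfers to $H^\sigma$. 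The transfer holds because kernel elements lie in $L^2$ anyway.

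For (ii), the inclusion $U_{\sigma'}\subset U_\sigma$ is trivial. The coordinate of a measure is $h=a_\mu\rho/\rho_0$ on every level, which is intrinsic, so the coordinates agree and $\Phi_{\sigma'}=\Phi_\sigma|_{U_{\sigma'}}$. The inclusion map is the restriction of the bounded linear inclusion $H^{\sigma'}\to H^\sigma$, hence analytic. Part (iii) follows by intersecting (i) over $\sigma$. Since $C^\infty=\bigcap H^\sigma$, this yields the set identities and $U_\infty$. The submanifold property follows because $N(h)=\langle h,Sh\rangle_0$ is a smooth quadratic form on $C^\infty$ with $DN(h)h=2$, which gives a splitting by the line through $h$. $\Phi$ and its inverse are smooth in the convenient sense. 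A smooth curve in $C^\infty$ is smooth into each $H^\sigma$, and the levelwise inverses are compatible by (ii). So composition with $\Phi_\sigma^{-1}$ gives a curve that is smooth into every $H^\sigma$, and therefore smooth into $C^\infty$.
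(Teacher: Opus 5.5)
Your proposal is correct and follows essentially the paper's proof: the same bootstrap through $1/a_\mu=S(a_\mu\rho/\rho_0)$ (the paper runs it on $h=\rho/(\rho_0\,Sh)$, which is the same iteration), the same compactness and Fredholm argument for $\id+M_{h/Sh}S$ (your homogeneity remark is just another way of seeing that the ambient local inverse carries the level set $\{\langle g,Sg\rangle_0=1\}$ onto the mass-one hyperplane), the same identity of coordinates in (ii), and the same levelwise argument for the convenient smoothness of $\Phi^{-1}$ in (iii). One small correction and one detail to fill in: the conjugation is $\id+M_{h/Sh}S=M_h(\id+K_{\mu_h})M_h^{-1}$, so the substitution is $w=v/h$ rather than $v/a$, and then $\langle w+K_{\mu_h}w,w\rangle_{L^2(\mu_h)}=\int_X\frac{Sh}{h}\,v^2\,d\mu_0+\langle v,Sv\rangle_0$, which is exactly the paper's injectivity identity; and in (iii) your ``splitting by the line through $h$'' should be written out as the explicit global chart $g\mapsto g/\langle g,Sh_0\rangle_0-h_0$ that the paper uses, since no implicit function theorem is available in $C^\infty(X)$.
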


\newcommand{\PrfScale}{%
(i) Let $\mu\in\mathcal P^\sigma(X)$. By Proposition~\ref{prop:stratum}(ii), $\mu=\mu_h$ with $h\in U_\sigma$, so $d\mu/d\mathrm{vol}=\rho_0\,h\,(Sh)$. By (M1), $Sh\in H^{\sigma+2s}(X)\subset H^\sigma(X)$, and by (M2) the product $\rho_0\,h\,(Sh)$ lies in $H^\sigma(X)$; it is positive since $h>0$ and $k_c>0$, and its integral is $\mu_h(X)=1$. Conversely, let $\rho\in H^\sigma(X)$ be positive with $\int_X\rho\,d\mathrm{vol}=1$, and let $\mu=\rho\,\mathrm{vol}$. The self-potential $\fmu$ is continuous, as a fixed point \eqref{eq:fixpunkt} of the entropic $c$-transform, so the tilt $\amu=e^{\fmu/\eps}$ is continuous and positive, and $A(\mu)=h\mu_0$ with $h:=\amu\rho/\rho_0$, which is continuous and positive. The identity $B(\mu)=\Hc[A(\mu)]$ after \eqref{eq:Bmap} becomes $1/\amu=Sh$, so
\begin{equation}\label{eq:hdensity}
h=\frac{\rho}{\rho_0\,Sh}.
\end{equation}
Let $r_j=\min(\sigma,2js)$ for integers $j\ge0$. Then $h\in L^2(\mu_0)=H^{r_0}(X)$. If $h\in H^{r_j}(X)$, then $Sh\in H^{r_j+2s}(X)$ by (M1), since $r_j\le\sigma$; as $Sh>0$ and $r_j+2s>d/2$, (M2) yields $1/(\rho_0\,Sh)\in H^{r_j+2s}(X)$, and since $\rho\in H^\sigma(X)$, the right side of \eqref{eq:hdensity} lies in $H^{r_{j+1}}(X)$. After $\lceil\sigma/(2s)\rceil$ steps $r_j=\sigma$, so $h$ is a positive element of $H^\sigma(X)$, and the converse part of Proposition~\ref{prop:stratum}(ii) yields $h\in U_\sigma$ and $\mu=\mu_h\in\mathcal P^\sigma(X)$.

The map $\Phi_\sigma$ is the restriction to $U_\sigma$ of the quadratic map $g\mapsto\rho_0\,g\,(Sg)$ on $H^\sigma(X)$, which is continuous by (M1) and (M2), hence real-analytic. By the first part and the injectivity of $h\mapsto\mu_h$ in Proposition~\ref{prop:stratum}(ii), $\Phi_\sigma$ is a bijection of $U_\sigma$ onto the positive densities of mass one, which form an open subset of the affine hyperplane $\left\{\rho\in H^\sigma(X):\int_X\rho\,d\mathrm{vol}=1\right\}$, since $H^\sigma(X)$ embeds into $\CX$. On $H^\sigma(X)$ the derivative is
\[
D\Phi_\sigma(h)v=\rho_0\left(v\,(Sh)+h\,(Sv)\right)=\rho_0\,(Sh)\left(\id+M_{h/Sh}\,S\right)v.
\]
The operator $M_{h/Sh}\,S$ is compact on $H^\sigma(X)$: $S$ maps $H^\sigma(X)$ boundedly into $H^{\sigma+2s}(X)$ by (M1), the embedding of $H^{\sigma+2s}(X)$ into $H^\sigma(X)$ is compact on the closed manifold $X$, and multiplication by $h/Sh$ is bounded on $H^\sigma(X)$ by (M2). So $\id+M_{h/Sh}\,S$ is Fredholm of index zero. It is injective: if $v+(h/Sh)\,Sv=0$, multiplication by $Sh/h$ and the inner product with $v$ in $L^2(\mu_0)$ yield
\[
\int_X\frac{Sh}{h}\,v^{2}\,d\mu_0+\langle v,Sv\rangle_0=0,
\]
where both terms are nonnegative, the second since $S$ is nonnegative; as $Sh/h>0$ and $v$ is continuous, $v=0$. Multiplication by $\rho_0\,Sh$ is an isomorphism of $H^\sigma(X)$ by (M2), so $D\Phi_\sigma(h)$ is an isomorphism of $H^\sigma(X)$. By the self-adjointness of $S$, $\int_X D\Phi_\sigma(h)v\,d\mathrm{vol}=\langle v,Sh\rangle_0+\langle h,Sv\rangle_0=2\langle v,Sh\rangle_0$, so $D\Phi_\sigma(h)$ maps the closed hyperplane $T_hU_\sigma$ of $H^\sigma(X)$ into the closed hyperplane of densities of mass zero, and onto it, since an isomorphism maps closed hyperplanes to closed hyperplanes. By the analytic inverse function theorem, $\Phi_\sigma$ is a local real-analytic diffeomorphism at every point, and since it is bijective, it is a real-analytic diffeomorphism.

(ii) Assumption~\ref{assump:stratum} for $\sigma'$ contains Assumption~\ref{assump:stratum} for $\sigma$, since (M1) on $[0,\sigma']$ includes $[0,\sigma]$. The embedding of $H^{\sigma'}(X)$ into $H^\sigma(X)$ is continuous and the constraints defining $U_{\sigma'}$ and $U_\sigma$ are the same, so $U_{\sigma'}=U_\sigma\cap H^{\sigma'}(X)$. If $A(\mu)=h\mu_0$ with $h\in H^{\sigma'}(X)$, then $h\in H^\sigma(X)$; so $\mathcal P^{\sigma'}(X)\subset\mathcal P^\sigma(X)$, and the coordinate $h$, the density of $A(\mu)$, is the same on both levels. The maps $\Phi_{\sigma'}$ and $\Phi_\sigma$ are given by the same formula, and the inclusion $U_{\sigma'}\to U_\sigma$ is the restriction of a continuous linear map to an analytic submanifold, hence real-analytic.

(iii) By the Sobolev embeddings, $C^\infty(X)$ is the intersection of the spaces $H^\sigma(X)$, $\sigma>d/2$, with the Fr\'echet topology defined by their norms; so by (i) on every level and the identity of coordinates in (ii), a measure lies in every stratum if and only if its density with respect to $\mathrm{vol}$ is a positive smooth function of mass one, which proves $\bigcap_{\sigma>d/2}\mathcal P^\sigma(X)=\mathcal P^\infty_+(X)$, and $U_\infty=\left\{h\in C^\infty(X):h>0,\ N(h)=1\right\}$ with $N(g)=\langle g,Sg\rangle_0$ as in the proof of Proposition~\ref{prop:stratum}. For $h_0\in U_\infty$ and $\ell(g):=\langle g,Sh_0\rangle_0$, the map $g\mapsto\left(g/\ell(g)-h_0,\ N(g)\right)$ is a diffeomorphism from the positive functions in $C^\infty(X)$ onto $W\times(0,\infty)$, where $W:=\{v\in C^\infty(X):\ell(v)=0,\ h_0+v>0\}$ is open in a closed hyperplane, with inverse $(v,t)\mapsto\left(t/N(h_0+v)\right)^{1/2}(h_0+v)$, both maps being built from continuous linear maps, the continuous quadratic form $N$ and division by positive smooth real functions; it maps $U_\infty$ onto $W\times\{1\}$, so $U_\infty$ is a submanifold of codimension one of $C^\infty(X)$ with the global chart $h\mapsto h/\ell(h)-h_0$ onto $W$. The map $\Phi$ is bijective by (i) on every level and smooth, being the restriction of the quadratic map $g\mapsto\rho_0\,g\,(Sg)$, which is continuous on $C^\infty(X)$ by (M1) on every level; and $\Phi^{-1}$ is smooth in the convenient calculus, since a curve is smooth into $C^\infty(X)$ if and only if it is smooth into every $H^\sigma(X)$, the topology of $C^\infty(X)$ being the initial topology of the inclusions, and for a smooth curve $\tau\mapsto\rho_\tau$ of densities in $\mathcal P^\infty_+(X)$ the curves $\tau\mapsto\Phi_\sigma^{-1}(\rho_\tau)$ are smooth into $H^\sigma(X)$ by (i) and coincide by (ii).
}
\iflong
\begin{proof}
\PrfScale
\end{proof}
\else
The proof is in Appendix~\ref{app:proofs-hilbert}.
\fi

\begin{remark}[Parametrized measure models]\label{rem:ajls}
The map $U_\sigma\to\MX$, $h\mapsto\mu_h=h\,(Sh)\,\mu_0$, is a parametrized measure model in the sense of \citet[Definition~3.4]{AyJostLeSchwachhoefer2017}, dominated by $\mu_0$ with the positive density $h\,(Sh)$ and real-analytic, since $h\mapsto h\,(Sh)$ is a continuous quadratic map from $H^\sigma(X)$ into $\CX$. Since $S$ is linear, $\log\left((h+tv)\,S(h+tv)\right)=\log(h+tv)+\log(Sh+tSv)$ pointwise on $X$, so the logarithmic derivative of the model in the direction $v\in T_hU_\sigma$,
\[
\partial_v\log\left(h\,(Sh)\right)=\frac{v}{h}+\frac{Sv}{Sh},
\]
is continuous and depends continuously on $(h,v)$ in the supremum norm; by \citep[Theorem~3.2]{AyJostLeSchwachhoefer2017} the model is $k$-integrable for every $k$ \citep[Definition~3.7]{AyJostLeSchwachhoefer2017}, and its canonical tensors \citep[Section~3.2.5]{AyJostLeSchwachhoefer2017} are defined on $U_\sigma$, among them the Fisher metric
\[
g^{F}_h(v,v)=\int_X\left(\frac{v}{h}+\frac{Sv}{Sh}\right)^{2}h\,(Sh)\,d\mu_0
\]
and the Amari-Chentsov tensor \citep[Example~3.5]{AyJostLeSchwachhoefer2017}. In general these tensors live on the parameter space of a model, as pullbacks of the canonical tensors on the spaces of roots of measures \citep[Section~3.2.3]{AyJostLeSchwachhoefer2017}, directed limits over all reference measures; here every $\mu_h$ has a continuous positive density with respect to $\mu_0$, so the model takes its values in the copy of $L^{1/r}(\mu_0)$ inside these spaces, and the parameter space is the manifold of measures itself: by Theorem~\ref{thm:scale}, $h\mapsto\mu_h$ is a real-analytic diffeomorphism onto $\mathcal P^\sigma(X)$, and in the density chart $\rho$ the metric is $g^{F}_\rho(\psi,\psi)=\int_X\psi^{2}/\rho\,d\mathrm{vol}$, the Fisher-Rao metric of the positive densities of Sobolev regularity $\sigma$. The Fisher-Rao metric, the Amari-Chentsov tensor and the Sinkhorn metric of \autoref{sec:metric} are therefore tensors on one Hilbert manifold of measures, not on a parameter space mapped into it, and every model $\xi\mapsto\rho_\xi\,\mathrm{vol}$ on a manifold of parameters whose densities form a $C^{1}$ map into the positive probability densities in $H^\sigma(X)$ is a $C^{1}$ map into this manifold. By the chain rule, $\partial_V\log\rho_\xi$ is the logarithmic derivative of the stratum in the direction of the image of $V$, so the Fisher metric and the Amari-Chentsov tensor of the model are the pullbacks of those of the stratum, and the pullback of $g^{\sigma}$ is the metric tensor of \citet{lavenant_luckhardt_mordant_et_al_riemannian_2025} on the tangent measures of the model.
\end{remark}

%% file: sec4.tex
\section{The Sinkhorn metric tensor}\label{sec:metric}

In Eguchi's construction the metric of a divergence is its second-order term on the diagonal; for the Sinkhorn divergence this term is a metric tensor on $\PX$, at every $\mu$ an inner product $g_\mu$ on the signed measures of total mass zero \citep{lavenant_luckhardt_mordant_et_al_riemannian_2025}. \autoref{subsec:lavmetric} states the results of Lavenant et al.\ in the operator framework of \autoref{sec:prelim}, with references to the proofs; Sections~\ref{subsec:gradients} to~\ref{subsec:stratummetric} contain our results on the metric: its characterization through the gradients of the mean values $\mu\mapsto\int_X f\,d\mu$, the strict positivity of the correction in the sphere coordinate, and its form on the smooth stratum, where it is a weak Riemannian metric bounded by a multiple of the Fisher metric.

\subsection{The metric of Lavenant et al.}\label{subsec:lavmetric}

\subsubsection{The metric by second variation}\label{subsec:secondvar}

Following \citep[Definition~4.1]{lavenant_luckhardt_mordant_et_al_riemannian_2025}, the metric is the second-order rate of the divergence along a segment,
\begin{equation}\label{eq:gdef}
g_\mu(\delta\mu,\delta\mu)=\lim_{t\to0}\frac{\Se(\mu,\mu+t\,\delta\mu)}{t^{2}},
\end{equation}
for the perturbations $\delta\mu$ specified below. In Eguchi's convention \citep{Eguchi_1983,Eguchi_1992} the metric of a divergence $D$ is $-\partial_t\partial_s D(\mu_t,\mu_s)$ at $t=s=0$; for $\Se$ this is twice \eqref{eq:gdef}, by \eqref{eq:gF} below, and we use \eqref{eq:gdef}, the $g$ of Lavenant et al., throughout the paper.

In what follows, $\MzX=\{\nu\in\MX:\nu(X)=0\}$ denotes the \emph{balanced} measures, the finite signed measures of total mass zero, which act as tangent vectors. Fix $\mu\in\PX$ and $\delta\mu\in\MzX$ with $\delta\mu\ll\mu$ and bounded density, so that $\mu_t:=\mu+t\,\delta\mu\in\PX$ for $\vert t\vert\le t_0$ and some $t_0>0$, and define
\[
F:[-t_0,t_0]^{2}\to\mathbb R,\qquad F(t,s)=\OTe(\mu_t,\mu_s),
\]
a symmetric function, since $\OTe$ is. The two diagonal terms subtracted in the definition \eqref{eq:sinkhorndivergenz} of $\Se$ are the diagonal values of $F$,
\begin{equation}\label{eq:selfkill}
\Se(\mu,\mu_t)=F(0,t)-\tfrac12F(0,0)-\tfrac12F(t,t)=\tfrac12\left(F(0,t)+F(t,0)-F(0,0)-F(t,t)\right),
\end{equation}
the second equality by symmetry; each depends on one variable only, so every mixed derivative of $\Se(\mu_t,\mu_s)$ in $t$ and $s$ is the mixed derivative of $F$, and the metric and the connection forms of \autoref{sec:tensors} are computed thus directly from $\OTe$. If $F$ has continuous first partial derivatives and a continuous mixed partial derivative near the origin, the symmetric second difference in \eqref{eq:selfkill} has the limit \citep[Lemma~3.11]{lavenant_luckhardt_mordant_et_al_riemannian_2025}
\begin{equation}\label{eq:gF}
g_\mu(\delta\mu,\delta\mu)=-\tfrac12\,\frac{\partial^{2}F}{\partial t\,\partial s}(0,0).
\end{equation}

Write $f_{t,s}=f_{\mu_t,\mu_s}$ and $g_{t,s}=g_{\mu_t,\mu_s}$ for the Schr\"odinger potentials of the pair $(\mu_t,\mu_s)$, with the symmetric normalization of \autoref{subsec:sinkhorn} on the diagonal, $f_{t,t}=g_{t,t}=f_{\mu_t}$. The value $\OTe$ is differentiable in its marginals with the potentials as gradients \citep[Proposition~2]{feydy_sejourné_vialard_amari_trouvé_peyré_interpolating_2019}, and along the segment the envelope principle for the dual problem \eqref{eq:eot-dual} yields \citep[Lemma~3.12]{lavenant_luckhardt_mordant_et_al_riemannian_2025}
\begin{equation}\label{eq:envelope}
\frac{\partial F}{\partial t}(t,s)=\left\langle\delta\mu,\ f_{t,s}\right\rangle,
\end{equation}
a pairing that does not depend on the normalization of $f_{t,s}$, since $\delta\mu$ is balanced. The derivatives of the potentials in their marginals, with which Lavenant et al.\ iterate \eqref{eq:envelope}, are recalled in Appendix~\ref{app:thirdvar}, where the third-order computation uses them.

\begin{definition}[{\citep[Definition~4.1]{lavenant_luckhardt_mordant_et_al_riemannian_2025}}]\label{def:metric}
For $\mu\in\PX$, the \emph{Sinkhorn metric tensor} at $\mu$ is the bilinear form
\begin{equation}\label{eq:metrik}
g_\mu:\MzX\times\MzX\to\mathbb R,\qquad
g_\mu(\delta\mu_1,\delta\mu_2)
=\frac{\eps}{2}\,\left\langle\delta\mu_1,\ (\id-\Kmu^{2})^{-1}\Hmu[\delta\mu_2]\right\rangle,
\end{equation}
where $\langle\nu,\varphi\rangle=\int_X\varphi\,d\nu$.
\end{definition}

\begin{theorem}[{Second variation of $\Se$; \citep[Theorem~3.4]{lavenant_luckhardt_mordant_et_al_riemannian_2025}}]\label{thm:hessian}
Let $\mu\in\PX$, and let $\delta\mu\in\MzX$ with $\delta\mu\ll\mu$ and bounded density. Then
\begin{equation}\label{eq:hessian}
\lim_{t\to0}\frac{\Se(\mu,\mu_t)}{t^{2}}
=g_\mu(\delta\mu,\delta\mu).
\end{equation}
\end{theorem}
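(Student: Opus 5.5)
The plan is to reduce the limit to the mixed derivative of $F$ via \eqref{eq:gF}, and to compute that mixed derivative by differentiating the envelope formula \eqref{eq:envelope} in $s$ at $t=s=0$. Write $\delta\mu=\varphi\mu$ with $\varphi$ bounded and of $\mu$-mean zero. The first step is regularity: I would show that $(t,s)\mapsto f_{t,s}$ is $C^1$ into $C(X)$ near the origin. This makes $\partial_tF$ and $\partial_s\partial_tF$ continuous, and by the symmetry of $F$ also $\partial_sF$, so \eqref{eq:gF} applies. For this I would apply the implicit function theorem in $C(X)\times C(X)$ to the Schr\"odinger system \eqref{eq:schroedingersystem}, written as
\[
\Psi(f,g;t,s)=\bigl(f-T_\eps(g,\mu_s),\,g-T_\eps(f,\mu_t)\bigr)=0 ,
\]
with one normalization imposed to kill the shift degeneracy, for instance $\int_X g\,d\mu=\int_X f\,d\mu$. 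Since $\mu_t=\mu+t\varphi\mu$ is affine in $t$, $T_\eps$ is smooth in $(f,t)$.

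The linearization at $(f_\mu,f_\mu;0,0)$ is the next step. Differentiating $T_\eps(f,\mu)$ in $f$ in the direction $\dot f$ gives $-\Kmu\dot f$, by the definition of $k_\mu$ and \eqref{eq:selfnorm}. Differentiating $T_\eps(f_\mu,\mu_s)$ in $s$ gives $-\eps\,\Hmu[\varphi\mu]=-\eps\Kmu\varphi$. The linearized system for $(\dot f,\dot g)=\partial_s(f,g)$ at the origin is therefore
\[
\dot f=-\Kmu\dot g-\eps\Kmu\varphi,\qquad \dot g=-\Kmu\dot f.
\]
The operator $\begin{pmatrix}\id&\Kmu\\ \Kmu&\id\end{pmatrix}$ is invertible modulo the one-dimensional kernel $(1,-1)$, which is the shift. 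This uses Proposition~\ref{prop:operators}: $\id-\Kmu^2$ is invertible on the complement of the constants, and $\Kmu$ maps $C(X)$ compactly, so Fredholm theory gives invertibility on $C(X)$ modulo constants. Solving gives
\[
\dot f=\eps\,(\id-\Kmu^2)^{-1}\Kmu\varphi
\]
modulo constants (a sign check is needed here). The constants are irrelevant because $\delta\mu$ is balanced.

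The last step is the assembly. Since $\partial_tF(0,s)=\langle\delta\mu,f_{0,s}\rangle$, we get
\[
\partial_s\partial_tF(0,0)=\langle\delta\mu,\partial_sf_{0,s}|_{s=0}\rangle .
\]
Here the derivative in $s$ of the first potential of the pair $(\mu,\mu_s)$ is $-\eps(\id-\Kmu^2)^{-1}\Kmu\varphi$, up to the sign to be fixed. Inserting this into \eqref{eq:gF} yields
\[
g_\mu(\delta\mu,\delta\mu)=\frac{\eps}{2}\bigl\langle\delta\mu,(\id-\Kmu^2)^{-1}\Hmu[\delta\mu]\bigr\rangle ,
\]
using $\Hmu[\varphi\mu]=\Kmu\varphi$. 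A sanity check on the sign is that the result is nonnegative, by Proposition~\ref{prop:operators}(i)--(iii).

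I expect the main obstacle to be the regularity step, namely the invertibility of the linearization on $C(X)$ together with the handling of the shift. Because of it, the implicit function theorem has to be set up on $C(X)\times C(X)$ modulo the diagonal constants $(\lambda,-\lambda)$, or with an explicit normalization that is preserved by the linearization.
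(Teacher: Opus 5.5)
Your route is the one the paper indicates through Lavenant et al.: reduce to the mixed derivative by \eqref{eq:gF}, differentiate the envelope identity \eqref{eq:envelope}, and insert the derivative of the first potential in the second marginal, which the paper recalls in \eqref{eq:firstorder}. The sign is already fixed by your own linearized system. Substituting $\dot g=-\Kmu\dot f$ into the first equation gives $(\id-\Kmu^{2})\dot f=-\eps\Kmu\varphi$, so $\partial_sf_{0,s}|_{s=0}=-\eps(\id-\Kmu^{2})^{-1}\Hmu[\delta\mu]$; this is the $\partial_rf|_0=-\eps\varphi_w$ of \eqref{eq:firstorder}. Your first display with $+\eps$ is a slip, and your second one correctly gives $-\tfrac12\partial_t\partial_sF(0,0)=g_\mu(\delta\mu,\delta\mu)$.

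The regularity step, however, does not work as you set it up. The map $\Psi$ is invariant under the shift $(f,g)\mapsto(f+\lambda,g-\lambda)$, and this produces a cokernel as well as a kernel. Since $\int_X\Kmu\psi\,d\mu=\int_X\psi\,d\mu$, the range of the linearization $(\dot f,\dot g)\mapsto(\dot f+\Kmu\dot g,\ \Kmu\dot f+\dot g)$ lies in the closed hyperplane $\{(a,b):\int_Xa\,d\mu=\int_Xb\,d\mu\}$ of $\CX\times\CX$. So neither a normalization on the domain nor the quotient by $(\lambda,-\lambda)$ makes $D_{(f,g)}\Psi$ an isomorphism onto $\CX\times\CX$, and the implicit function theorem cannot be applied to $\Psi=0$ directly.

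A repair uses the operator you actually invert. Eliminate $g=T_\eps(f,\mu_t)$ and consider $f\mapsto f-T_\eps(T_\eps(f,\mu_t),\mu_s)$. This map is shift-invariant, so it induces a map of $\CX/\mathbb R$ into itself with linearization $\id-\Kmu^{2}$ at the origin. That operator is bijective on $\CX/\mathbb R$ by your Fredholm argument (cf.\ Lemma~\ref{lem:flat}(ii)). The implicit function theorem then gives a $C^1$ branch $[f_{t,s}]$ with $f_{t,s}-T_\eps(g_{t,s},\mu_s)=\lambda$ constant, where $g_{t,s}:=T_\eps(f_{t,s},\mu_t)$.

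It remains to show $\lambda=0$. Exponentiated, the two equations read $\int_Xe^{(f(x)+g(y)-c(x,y))/\eps}\,d\mu_t(x)=1$ for every $y$ and $\int_Xe^{(f(x)+g(y)-c(x,y))/\eps}\,d\mu_s(y)=e^{\lambda/\eps}$ for every $x$. Integrating the first against $\mu_s$ and the second against $\mu_t$ gives $1=e^{\lambda/\eps}$, so $\lambda=0$. Uniqueness up to the shift then identifies the branch with the Schr\"odinger potentials.

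Alternatively, the regularity you need is exactly Lemma~\ref{lem:smooth} (Carlier and Laborde), already stated in the paper. It gives $C^\infty$ dependence of $[f_{\mu_t,\mu_s}]\in\CX/\mathbb R$ on $(t,s)$, which makes $\partial_tF$, $\partial_sF$ and $\partial_s\partial_tF$ continuous via \eqref{eq:envelope}.
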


By \eqref{eq:selfnorm}, $\int_X\Hmu[\delta\mu]\,d\mu=\delta\mu(X)=0$, so $\Hmu[\delta\mu]$ has $\mu$-mean zero and $(\id-\Kmu^{2})^{-1}$ in \eqref{eq:metrik} acts on it as the inverse of $\id-\Kmu^{2}$ on $L^2_0(\mu)$ (the resolvent convention of \autoref{subsec:operators}). The form $g_\mu$ is defined on all balanced measures, and it is symmetric and positive definite \citep[Theorem~4.6]{lavenant_luckhardt_mordant_et_al_riemannian_2025}; Theorem~\ref{thm:hessian} identifies its quadratic form with the second-order rate \eqref{eq:gdef} of $\Se$ for the perturbations $\delta\mu\ll\mu$ with bounded density, and \citet[Theorem~3.4]{lavenant_luckhardt_mordant_et_al_riemannian_2025} state the expansion for $C^m$-perturbations, of which the segment is the case $m=0$.

\subsubsection{The metric in the sphere coordinate}\label{subsec:betametric}

\begin{theorem}[{The metric in the sphere coordinate; \citep[Definition~4.10, Theorem~4.12 and Lemma~4.13]{lavenant_luckhardt_mordant_et_al_riemannian_2025}}]\label{thm:gbeta}
Let $\mu\in\PX$ and $\beta=B(\mu)$, and let $\beta^{\perp}=\{v\in\Hc:\langle\beta,v\rangle_{\Hc}=0\}$ be the tangent space at $\beta$ of the unit sphere of $\Hc$. For $v\in\beta^{\perp}$, the function $\amu v$ lies in $L^{2}_{0}(\mu)$ by \eqref{eq:tangency}, so the symmetric bilinear form
\begin{equation}\label{eq:gbeta-bilinear}
\tilde g_\mu:\beta^{\perp}\times\beta^{\perp}\to\mathbb R,\qquad
\tilde g_\mu(v_1,v_2)=\frac{\eps}{2}\left(\langle v_1,v_2\rangle_{\Hc}+2\left\langle\amu v_1,\ (\id-\Kmu)^{-1}(\amu v_2)\right\rangle_{L^{2}(\mu)}\right)
\end{equation}
is well-defined. Its quadratic form is
\begin{equation}\label{eq:gbeta}
\tilde g_\mu(v,v)=\frac{\eps}{2}\left(\|v\|_{\Hc}^{2}+2\,Q_\mu(v)\right),
\qquad
Q_\mu(v):=\left\langle\amu v,\ (\id-\Kmu)^{-1}(\amu v)\right\rangle_{L^{2}(\mu)},
\end{equation}
whereby we call $Q_\mu$ the \emph{correction} . The form $\tilde g_\mu$ is the metric tensor in the sphere coordinate: for a weak-star continuous curve $(\mu_t)$ in $\PX$ with $\mu_0=\mu$ whose sphere image is weakly differentiable in $\Hc$ at $t=0$ with velocity $\dot\beta$, the velocity $\dot\beta$ lies in $\beta^{\perp}$, the velocity $\dot\mu$ of the curve lies in the completion $H^{\ast}_{\mu,0}=\{\xi\in\Hmu^{\ast}:\xi(1)=0\}$ of $\MzX$ for $g_\mu$, and the extension of $g_\mu$ to it satisfies $g_\mu(\dot\mu,\dot\mu)=\tilde g_\mu(\dot\beta,\dot\beta)$.
\end{theorem}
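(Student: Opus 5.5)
The plan is to differentiate the identity $\mu=B(\mu)\,A(\mu)$ of \autoref{subsec:sphere} together with $B(\mu)=\Hc[A(\mu)]$. This gives a linear relation between $\dot\mu$ and $\dot\beta$, which we then insert into \eqref{eq:metrik} and simplify with Proposition~\ref{prop:operators}(i) and Lemma~\ref{lem:tilt-isometry}.

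\emph{Formal derivation.} We first work formally, as if the curve were a smooth segment with $\dot\mu\ll\mu$ and $a:=\amu$ varied differentiably, so that $\dot A=a\,\dot\mu+\dot a\,\mu$ makes sense. Tangency is immediate: $\|B(\mu_t)\|_{\Hc}=1$ for all $t$ (\autoref{subsec:rkhs}), so differentiating $\langle B(\mu_t),B(\mu_t)\rangle_{\Hc}$ weakly at $t=0$ gives $\langle\beta,\dot\beta\rangle_{\Hc}=0$.

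From $\mu=\beta A$ we get $\dot\mu=\dot\beta\,A+\beta\,\dot A$. Since $\beta=1/a$ and $A=a\mu$, this gives
\[
\dot A=a\,\dot\mu-a^{2}\dot\beta\,\mu .
\]
Applying $\Hc[\cdot]$ and using $\dot\beta=\Hc[\dot A]$ yields $\dot\beta=\Hc[a\dot\mu]-\Hc[a^{2}\dot\beta\,\mu]$. By \eqref{eq:selbstkern}, $a\,\Hc[a\nu]=\Hmu[\nu]$, so multiplying by $a$ gives the key equation
\[
(\id+\Kmu)\,w=\Hmu[\dot\mu],\qquad w:=a\,\dot\beta .
\]
Here $w$ is mean zero in $L^2(\mu)$ by \eqref{eq:tangency}, and $\id+\Kmu$ is invertible by Proposition~\ref{prop:operators}(iii).

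\emph{Computing the metric.} Since $\id-\Kmu^{2}=(\id-\Kmu)(\id+\Kmu)$, we have
\[
(\id-\Kmu^{2})^{-1}\Hmu[\dot\mu]=(\id-\Kmu)^{-1}w .
\]
Proposition~\ref{prop:operators}(i) in the form $\langle\nu,\psi\rangle=\langle\Hmu[\nu],\psi\rangle_{\Hmu}$ (\eqref{eq:kme-duality} for $k_\mu$) then gives
\[
\tfrac{2}{\eps}g_\mu(\dot\mu,\dot\mu)=\langle(\id+\Kmu)w,(\id-\Kmu)^{-1}w\rangle_{\Hmu}.
\]
We split this into two terms.
\begin{itemize}
\item The $\Kmu w$ term is $\langle w,(\id-\Kmu)^{-1}w\rangle_{L^2(\mu)}=Q_\mu(\dot\beta)$, again by Proposition~\ref{prop:operators}(i).
\item For the $w$ term we write $(\id-\Kmu)^{-1}=\id+\Kmu(\id-\Kmu)^{-1}$ on mean-zero functions. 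This gives $\|w\|_{\Hmu}^{2}+Q_\mu(\dot\beta)$, and $\|w\|_{\Hmu}=\|a\dot\beta\|_{\Hmu}=\|\dot\beta\|_{\Hc}$ by Lemma~\ref{lem:tilt-isometry}.
\end{itemize}
Summing the two terms gives $\|\dot\beta\|_{\Hc}^{2}+2Q_\mu(\dot\beta)$, which is \eqref{eq:gbeta}. Polarization gives the bilinear form \eqref{eq:gbeta-bilinear}.

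\emph{Rigorous version (main obstacle).} For a general weak-star continuous curve, $\dot\mu$ need not be a measure and $a$ need not be differentiable. We would reverse the logic and \emph{define} $\dot\mu\in\Hmu^{\ast}$ by
\[
\dot\mu(\psi):=\langle(\id+\Kmu)(a\dot\beta),\psi\rangle_{\Hmu}.
\]
Then $\dot\mu(1)=\langle a\dot\beta,1\rangle_{L^2(\mu)}+\langle\Kmu(a\dot\beta),1\rangle_{\Hmu}=0$ by \eqref{eq:tangency} and (i), so $\dot\mu\in H^{\ast}_{\mu,0}$.

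Two things remain to be shown:
\begin{enumerate}
\item The functional $\dot\mu$ is the weak limit of $(\mu_t-\mu)/t$ tested against $\psi\in\Hmu$. To get this we would pass to the difference quotient in the exact identity $\mu_t=B(\mu_t)A(\mu_t)$. We would use the weak-star continuity of $t\mapsto a_{\mu_t}$, which holds since $\beta_t\to\beta$ in norm and therefore uniformly by Theorem~\ref{thm:sphere}, with $1/\beta_t$ uniformly bounded because $\fmu\ge0$ is bounded by $\|c\|_\infty$.
\item The extension of $g_\mu$ to the completion $H^{\ast}_{\mu,0}$ is given by the same formula with $\Hmu[\dot\mu]$ replaced by the Riesz representative of $\dot\mu$. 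This follows from density of $\MzX$ in $H^{\ast}_{\mu,0}$ and the boundedness of $(\id-\Kmu^{2})^{-1}$ from Proposition~\ref{prop:operators}(iii).
\end{enumerate}
Controlling the difference quotients of the product $B(\mu_t)A(\mu_t)$ is the delicate step. Once it is done, the algebraic computation above applies verbatim to the Riesz representative.
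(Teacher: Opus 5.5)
Your formal computation is correct. It is the algebra the paper itself carries out on the smooth stratum: your key equation $(\id+\Kmu)(\amu\dot\beta)=\Hmu[\dot\mu]$ is \eqref{eq:bridge}, and the reduction $(\id-\Kmu^{2})^{-1}(\id+\Kmu)=(\id-\Kmu)^{-1}$ followed by $R=\id+\Kmu R$ is the proof of Proposition~\ref{prop:stratumflat}(i). That proof evaluates in $L^{2}(\mu_0)$ using the self-adjointness of $S$, where you evaluate in $\Hmu$ using Proposition~\ref{prop:operators}(i) and Lemma~\ref{lem:tilt-isometry}. For general curves the paper gives no proof and cites Lavenant et al.

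The gap is in what the statement adds to this algebra: identifying the limit of $(\mu_t-\mu)/t$ on $\Hmu$. You announce this step but do not carry it out, and the tool you name does not suffice. Set $\beta_t=B(\mu_t)$, $A_t=A(\mu_t)$, your $w=\amu\dot\beta$, and $\psi=\amu\varphi\in\Hmu$ with $\varphi\in\Hc$. The identity $\mu_t=\beta_tA_t$ gives
\[
\int_X\psi\,d\mu_t-\int_X\psi\,d\mu=\int_X\psi\,(\beta_t-\beta)\,dA_t+\int_X\varphi\,d(A_t-A).
\]
The second term is $\langle\beta_t-\beta,\varphi\rangle_{\Hc}$ by \eqref{eq:kme-duality}, so its quotient by $t$ tends to $\langle\dot\beta,\varphi\rangle_{\Hc}=\langle w,\psi\rangle_{\Hmu}$. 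In the first term, however, $(\beta_t-\beta)/t$ converges only weakly in $\Hc$, hence only pointwise and boundedly on $X$, while $A_t\to A$ only weak-star. The uniform convergence of $a_{\mu_t}$ that you invoke gives the weak-star convergence of $A_t$. It does not give the limit of $\frac1t\int_X\psi\,(\beta_t-\beta)\,dA_t$, because a bounded, pointwise convergent family integrated against weak-star convergent measures need not converge.

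The missing idea is to move the kernel onto the measure. By \eqref{eq:kme-duality}, $\int_X\psi\,(\beta_t-\beta)\,dA_t=\langle\beta_t-\beta,\Hc[\psi A_t]\rangle_{\Hc}$, and $\Hc[\psi A_t]\to\Hc[\psi A]$ in norm. Indeed, by \eqref{eq:mmd-norm} the squared norm of the difference is a combination of integrals of the continuous function $k_c(x,y)\psi(x)\psi(y)$ against $A_t\otimes A_t$, $A_t\otimes A$ and $A\otimes A$, and these converge weak-star on $X\times X$. This is where your observations enter: $\beta_t\to\beta$ uniformly with $\beta_t\ge e^{-\|c\|_\infty/\eps}$, so $A_t=\beta_t^{-1}\mu_t\to A$ weak-star with bounded mass.

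The difference quotients are bounded in $\Hc$ near $t=0$ by the uniform boundedness principle. So pairing the weakly convergent quotients with the norm-convergent $\Hc[\psi A_t]$ gives convergence to $\langle\dot\beta,\Hc[\psi A]\rangle_{\Hc}=\int_X\psi\,w\,d\mu=\langle\Kmu w,\psi\rangle_{\Hmu}$, by Proposition~\ref{prop:operators}(i). Hence $(\mu_t-\mu)/t$ converges on $\Hmu$ to $\langle(\id+\Kmu)w,\cdot\rangle_{\Hmu}$, which is the functional you defined; it vanishes on $1$ because every $\mu_t$ is a probability measure.

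The same boundedness is what justifies your tangency step, where $\langle\beta_t-\beta,\beta_t+\beta\rangle_{\Hc}/t$ pairs the quotient with a norm-convergent vector. With this supplied, your second item and the algebra go through as you describe.
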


In \eqref{eq:gbeta} the first term is, up to the factor $\eps/2$, the round metric of the unit sphere of $\Hc$, which does not depend on $\mu$; the dependence on the base measure is in the correction, which is nonnegative and bounded by a multiple of the round term uniformly in $\mu$ (\autoref{subsec:positivity}).

\subsection{The metric through the gradients of mean values}\label{subsec:gradients}

\citet[Section~4]{Ay2024Otto} characterizes a Riemannian metric on probability measures by the gradients it assigns to the mean-value functions $E(f):\PX\to\mathbb R$, $E(f)(\mu)=\int_X f\,d\mu$, $f\in\CX$. The gradient of $E(f)$ at $\mu$ is the tangent vector $\operatorname{grad}_\mu E(f)$ with $g_\mu\left(\operatorname{grad}_\mu E(f),a\right)=\int_X f\,da$ for every tangent vector $a$, that is, the image of the covector of $E(f)$ under the inverse $f\mapsto f^\sharp$ of the flat map $a\mapsto a^\flat:=g_\mu(a,\cdot)$ on its range, which for a weak metric is a proper subspace of the cotangent space. We call $f\mapsto f^\sharp$ the \emph{Riesz map} of the metric, since it represents a linear form by a vector; it determines the metric, and so do the gradients of the mean-value functions \citep[Theorem~1 and Proposition~3]{Ay2024Otto}. Ay works with smooth positive densities on a closed Riemannian manifold; here the tangent space at $\mu\in\PX$ is $\MzX$ with the form $g_\mu$ of Definition~\ref{def:metric}, the cotangent vectors are taken in $\CX/\mathbb R$, the space of the covectors of the mean-value functions, which is smaller than the dual of $\MzX$ with the total variation norm, and the pairing is $\langle a,f\rangle=\int_X f\,da$, well-defined on classes modulo constants because $a$ is balanced. The construction is pointwise, does not use any differentiable structure (Remark~\ref{rem:nogo}), requires only Assumption~\ref{assump:standing} and applies at every $\mu$, so the quadratic cost is admitted. The Sinkhorn metric differs from the Fisher-Rao and Otto metrics (Remark~\ref{rem:rieszmaps}) in the domain of $\sharp$: $\Hmu[\cdot]$ is injective on measures (\autoref{subsec:rkhs}) but compact (\autoref{subsec:operators}), and its inverse exists only on its range.

\begin{lemma}[The flat map of the Sinkhorn metric]\label{lem:flat}
Let $\mu\in\PX$ and $C_0(X,\mu):=\left\{w\in\CX:\int_X w\,d\mu=0\right\}$. For $b\in\MzX$ the covector $b^\flat=g_\mu(b,\cdot)\in\CX/\mathbb R$ is represented, by Definition~\ref{def:metric}, by the function
\[
\frac{\eps}{2}\,\left(\id-\Kmu^{2}\right)^{-1}\Hmu[b],
\]
which lives in $C_0(X,\mu)$; this representative is also denoted $b^\flat$. Then: (i) the map $\MzX\to C_0(X,\mu)$, $b\mapsto b^\flat$, is linear and injective; (ii) $\id-\Kmu^{2}$ is a bijection of $C_0(X,\mu)$ onto itself; (iii) a function $w\in C_0(X,\mu)$ equals $b^\flat$ for some $b\in\MzX$ if and only if $(\id-\Kmu^{2})\,w\in\Hmu[\MX]$, and then $b=\frac{2}{\eps}\,\Hmu^{-1}\left[(\id-\Kmu^{2})\,w\right]$, a balanced measure.
\end{lemma}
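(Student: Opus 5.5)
The proof starts from the facts that $\Hmu[b]$ is continuous with $\mu$-mean zero for balanced $b$ (by \eqref{eq:selfnorm}) and that the resolvent convention maps $C(X)$ into $C(X)$ (\autoref{subsec:operators}). Together these show that $b^\flat=\frac{\eps}{2}(\id-\Kmu^2)^{-1}\Hmu[b]$ lies in $C_0(X,\mu)$. Its mean is zero because the resolvent takes values in $L^2_0(\mu)$ and the function is continuous. It represents $g_\mu(b,\cdot)$ on $\MzX$ by Definition~\ref{def:metric}, since the pairing with a balanced measure ignores constants. Linearity is then clear.

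Part (ii) comes next, because (i) and (iii) use it. Since $\Kmu$ is self-adjoint on $L^2(\mu)$ with $\Kmu1=1$, it preserves $\mu$-mean zero: $\int\Kmu w\,d\mu=\int w\,\Kmu1\,d\mu=0$. Thus $\id-\Kmu^2$ maps $C_0(X,\mu)$ into itself. For injectivity, let $(\id-\Kmu^2)w=0$ with $w\in C_0(X,\mu)$. Then $w$ is zero as an element of $L^2_0(\mu)$ by Proposition~\ref{prop:operators}(iii), that is, $w=0$ $\mu$-a.e. Pointwise on $X$ we have $w=\Kmu^2w$, and $\Kmu$ applied to a $\mu$-null function vanishes everywhere, so $w=0$ everywhere. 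This step is the subtle point (cf.\ the footnote on the support of $\mu$). For surjectivity, take $u\in C_0(X,\mu)$ and set $w=(\id-\Kmu^2)^{-1}u$ in the resolvent convention. As noted in \autoref{subsec:operators}, this $w$ is continuous, it satisfies $(\id-\Kmu^2)w=\Pi_0u=u$ pointwise, and it has mean zero.

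For (i), suppose $b^\flat=0$. By (ii) this gives $\Hmu[b]=0$. By Lemma~\ref{lem:tilt-isometry}, $\Hmu[b]=\amu\,\Hc[\amu b]$, so $\Hc[\amu b]=0$. Injectivity of the kernel mean embedding (\autoref{subsec:rkhs}) gives $\amu b=0$, hence $b=0$ since $\amu>0$. The same argument shows that $\Hmu[\cdot]$ is injective on all of $\MX$, which makes the notation $\Hmu^{-1}$ in (iii) meaningful.

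For (iii), the forward direction is immediate: if $w=b^\flat$, then $(\id-\Kmu^2)w=\frac{\eps}{2}\Hmu[b]=\Hmu[\frac{\eps}{2}b]$ by the bijection in (ii). Conversely, suppose $(\id-\Kmu^2)w=\Hmu[\nu]$ with $\nu\in\MX$, and set $b=\frac{2}{\eps}\nu$. The main point is to show that $b$ is balanced. By (ii), $\Hmu[\nu]$ has $\mu$-mean zero. By Fubini and \eqref{eq:selfnorm}, $\int\Hmu[\nu]\,d\mu=\int\!\int k_\mu(x,y)\,d\mu(y)\,d\nu(x)=\nu(X)$, so $\nu(X)=0$. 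Then $b^\flat=\frac{\eps}{2}(\id-\Kmu^2)^{-1}\Hmu[b]=(\id-\Kmu^2)^{-1}(\id-\Kmu^2)w=w$ by (ii), which gives the stated formula for $b$. I expect the main obstacle to be the pointwise (off-support) injectivity in (ii); the bootstrap $w=\Kmu^2w$ handles it.
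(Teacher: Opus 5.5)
Your proof is correct, and for (ii) and (iii) it follows the paper. Means are preserved because $\Kmu$ is self-adjoint with $\Kmu1=1$. Injectivity on $C_0(X,\mu)$ is upgraded from $\mu$-a.e.\ to everywhere through the pointwise identity $w=\Kmu^{2}w$. You start from the invertibility of $\id-\Kmu^{2}$ on $L^2_0(\mu)$; the paper starts from the simplicity of the eigenvalue $1$, which makes $w$ first $\mu$-a.e.\ constant, then constant, then zero. Surjectivity comes from the resolvent convention, and the balancedness of $b$ from $\int_X\Hmu[\nu]\,d\mu=\nu(X)$.

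The real difference is in (i). The paper proves it in one line and independently of (ii): $b^\flat=0$ gives $g_\mu(b,b)=\langle b,b^\flat\rangle=0$, so $b=0$ by the positive definiteness of $g_\mu$, which it imports from Lavenant et al. You instead prove (ii) first and use it to get $\Hmu[b]=0$. You then conclude from the injectivity of the kernel mean embedding of $k_c$ via $\Hmu[b]=\amu\,\Hc[\amu b]$.

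What each route buys:
\begin{itemize}
\item Your route uses only Assumption~\ref{assump:standing} and the operator facts of \autoref{subsec:operators}. On the way it establishes the injectivity of $\Hmu[\cdot]$ on $\MX$, which (iii) needs for $\Hmu^{-1}$ to make sense; the paper takes this from \autoref{subsec:rkhs}.
\item The paper's route is shorter, but it rests on the stronger external fact $g_\mu(b,b)>0$.
\end{itemize}

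Two small attributions. The identity $\Hmu[b]=\amu\,\Hc[\amu b]$ comes from the factorization \eqref{eq:selbstkern}, not from Lemma~\ref{lem:tilt-isometry}. And the fact that $\frac{\eps}{2}(\id-\Kmu^{2})^{-1}\Hmu[b]$ represents $g_\mu(b,\cdot)$, and not only $g_\mu(\cdot,b)$, uses the symmetry of $g_\mu$ in addition to Definition~\ref{def:metric}.
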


\begin{proof}
The function $\Hmu[b]$ is continuous with $\mu$-mean zero, as shown after Theorem~\ref{thm:hessian}, and $(\id-\Kmu^{2})^{-1}$ maps $\CX$ into $\CX$ and $L^2_0(\mu)$ into $L^2_0(\mu)$ by the resolvent convention (\autoref{subsec:operators}), so $b^\flat\in C_0(X,\mu)$. (i) If $b^\flat=0$, then $g_\mu(b,b)=0$, so $b=0$ by the positive definiteness of $g_\mu$ (\autoref{subsec:secondvar}). (ii) $\id-\Kmu^{2}$ maps $\CX$ into $\CX$ and preserves $\mu$-means, since $\Kmu$ is self-adjoint on $L^2(\mu)$ with $\Kmu1=1$. If $w\in C_0(X,\mu)$ and $\Kmu^{2}w=w$, then $w$ is $\mu$-almost everywhere constant by Proposition~\ref{prop:operators}(ii), hence $\Kmu^{2}w$, and with it $w$, is constant everywhere, hence zero. For $u\in C_0(X,\mu)$, $w:=(\id-\Kmu^{2})^{-1}u$ lies in $C_0(X,\mu)$ and solves $(\id-\Kmu^{2})\,w=u$. (iii) By (ii), $w=b^\flat$ if and only if $\Hmu[b]=\frac{2}{\eps}(\id-\Kmu^{2})\,w$, which determines $b$ by the injectivity of $\Hmu[\cdot]$ on $\MX$ (\autoref{subsec:rkhs}). Its mass is $b(X)=\int_X\Hmu[b]\,d\mu=\frac{2}{\eps}\int_X(\id-\Kmu^{2})\,w\,d\mu=0$.
\end{proof}

\begin{definition}[Riesz map of the Sinkhorn metric]\label{def:riesz}
For $\mu\in\PX$, with the centring $\Pi_0h=h-\int_X h\,d\mu$ of \eqref{eq:meanzero}, let
\[
\mathcal D_\mu:=\left\{f\in\CX:(\id-\Kmu^{2})\,\Pi_0f\in\Hmu[\MX]\right\},
\]
a linear subspace of $\CX$ that contains the constants, and
\[
\mathcal D_\mu\to\MzX,\qquad f\mapsto f^\sharp:=\frac{2}{\eps}\,\Hmu^{-1}\left[(\id-\Kmu^{2})\,\Pi_0f\right],
\]
the measure with $(f^\sharp)^\flat=\Pi_0f$ of Lemma~\ref{lem:flat}(iii). The \emph{cotangent form} of the Sinkhorn metric is $\langle\langle f,g\rangle\rangle_\mu:=g_\mu(f^\sharp,g^\sharp)=\int_X g\,d(f^\sharp)$ for $f,g\in\mathcal D_\mu$.
\end{definition}

Since $\sharp$ vanishes on constants, it is defined on $\mathcal D_\mu/\mathbb R$, and by Lemma~\ref{lem:flat} it is a bijection of $\mathcal D_\mu/\mathbb R$ onto $\MzX$, with inverse $a\mapsto a^\flat+\mathbb R$. For infinite $X$, the domain is a dense proper subspace of $\CX$: proper because $\Hmu[\cdot]$ is compact and dense because $\Hmu[\MX]$ contains the sections $k_\mu(x,\cdot)$, whose span is dense in $\Hmu$, and $\Hmu$ is itself dense in $\CX$ by Lemma~\ref{lem:tilt-isometry} (and the universality in Assumption~\ref{assump:standing}). %In the two classical cases, the corresponding domains are all continuous functions for Fisher-Rao and the twice differentiable ones for Otto, a restriction that Ay's smooth setting does not show.

\begin{lemma}[Functional calculus for $\Kmu$ on $L^2_0(\mu)$]\label{lem:spectral}
Let $\mu\in\PX$. The operator $\Kmu$ maps $L^2_0(\mu)$ into itself and is compact, self-adjoint and injective there, so $L^2_0(\mu)$ has an orthonormal basis $(e_j)_{j\in J}$ of eigenvectors, $\Kmu e_j=\lambda_je_j$ with $\lambda_j\in(0,q]$. For $\psi\in L^2_0(\mu)$ write $c_j=\langle\psi,e_j\rangle_{L^2(\mu)}$, and for a bounded function $\phi:(0,q]\to\mathbb R$, let $\phi(\Kmu)\psi:=\sum_{j\in J}\phi(\lambda_j)\,c_j\,e_j$, an operator on $L^2_0(\mu)$.
(i) For a rational function $\phi=p_1/p_2$ whose denominator does not vanish on $[0,q]$, $\phi(\Kmu)=p_1(\Kmu)\,p_2(\Kmu)^{-1}$; in particular $(\id\pm\Kmu)^{-1}$, $(\id-\Kmu^{2})^{-1}$ and $\Kmu(\id-\Kmu^{2})^{-1}$ are of this form. The inverse $\Kmu^{-1}$ is defined on $\Kmu(L^2_0(\mu))=\{\psi\in L^2_0(\mu):\sum_jc_j^{2}/\lambda_j^{2}<\infty\}$ and acts by $e_j\mapsto\lambda_j^{-1}e_j$.
(ii) $\langle\psi,\phi(\Kmu)\psi\rangle_{L^2(\mu)}=\sum_{j}\phi(\lambda_j)\,c_j^{2}$; hence, if $\phi_2>0$ on $(0,q]$ and $\phi_1/\phi_2$ is nondecreasing there, then $\langle\psi,\phi_1(\Kmu)\psi\rangle_{L^2(\mu)}\le\frac{\phi_1(q)}{\phi_2(q)}\,\langle\psi,\phi_2(\Kmu)\psi\rangle_{L^2(\mu)}$.
\end{lemma}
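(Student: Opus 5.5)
The plan is to reduce everything to Proposition~\ref{prop:operators}(ii) and the spectral theorem for compact self-adjoint operators, then read off (i) and (ii) from the eigenexpansion.

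\emph{Invariance, compactness and self-adjointness.} Since $\Kmu$ is self-adjoint on $L^2(\mu)$ with $\Kmu1=1$, for $\psi\in L^2_0(\mu)$ we have $\int\Kmu\psi\,d\mu=\langle\Kmu\psi,1\rangle_{L^2(\mu)}=\langle\psi,\Kmu1\rangle=0$. Hence $L^2_0(\mu)$ is invariant. Compactness and self-adjointness pass to this closed invariant subspace.

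\emph{Spectrum and injectivity.} Nonnegativity and Proposition~\ref{prop:operators}(ii) put the spectrum of the restriction in $[0,q]$: the eigenvalue $1$ is excluded because its eigenspace consists of the constants, which meet $L^2_0(\mu)$ only in $0$.

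The main obstacle is injectivity, i.e.\ excluding the eigenvalue $0$. I would use Proposition~\ref{prop:operators}(i) together with the factorization of $\Kmu$ through the embedding. If $\Kmu\psi=0$, then $\Hmu[\psi\mu]=0$, and the kernel mean embedding is injective on $\MX$ (strict positive definiteness, via Lemma~\ref{lem:tilt-isometry}). Therefore $\psi\mu=0$, i.e.\ $\psi=0$ in $L^2(\mu)$. Equivalently,
\[
\langle\psi,\Kmu\psi\rangle_{L^2(\mu)}=\int\!\!\int k_\mu\,\psi\,\psi\,d\mu\,d\mu=\|\Hmu[\psi\mu]\|_{\Hmu}^2,
\]
which is positive for $\psi\ne0$. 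The spectral theorem then yields an orthonormal eigenbasis $(e_j)$ of $L^2_0(\mu)$ with $\lambda_j\in(0,q]$. The index set $J$ may be finite, if $\mu$ has finite support, or countable.

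\emph{Part (i).} The operator $\phi(\Kmu)$ is bounded with norm at most $\sup|\phi|$ and acts diagonally. For polynomials, $p(\Kmu)e_j=p(\lambda_j)e_j$ directly. If $p_2$ has no zeros on $[0,q]$, then $p_2(\Kmu)$ is diagonal with entries bounded away from $0$, so its inverse on $L^2_0(\mu)$ is the diagonal operator with entries $1/p_2(\lambda_j)$. Composing gives $\phi(\Kmu)=p_1(\Kmu)p_2(\Kmu)^{-1}$. The examples $1\pm x$, $1-x^2$ and $x/(1-x^2)$ have denominators nonvanishing on $[0,q]$ because $q<1$. Moreover, $(\id-\Kmu^2)^{-1}$ agrees with the resolvent convention on $L^2_0(\mu)$. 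For $\Kmu^{-1}$: $\psi=\Kmu\chi$ with $\chi\in L^2_0(\mu)$ holds if and only if $c_j=\lambda_j\langle\chi,e_j\rangle$ for all $j$. By Parseval, this is possible exactly when $\sum_j c_j^2/\lambda_j^2<\infty$, and then $\Kmu^{-1}$ acts by $e_j\mapsto\lambda_j^{-1}e_j$, with uniqueness given by injectivity.

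\emph{Part (ii).} The identity follows from Parseval. For the inequality, monotonicity gives $\phi_1(\lambda_j)\le\frac{\phi_1(q)}{\phi_2(q)}\phi_2(\lambda_j)$ for each $j$, using $\phi_2(\lambda_j)>0$. Multiplying by $c_j^2\ge0$ and summing gives the claim.
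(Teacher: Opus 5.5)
Your proof is correct and follows the paper's argument. You get invariance of $L^2_0(\mu)$ from self-adjointness and $\Kmu1=1$, and the spectral bound from Proposition~\ref{prop:operators}(ii). You get injectivity from $\langle\psi,\Kmu\psi\rangle_{L^2(\mu)}=\|\Hmu[\psi\mu]\|_{\Hmu}^{2}$ together with the injectivity of $\Hmu[\cdot]$ on measures. Then (i) and (ii) follow by acting on the eigenbasis and using Parseval.

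One small point: your first phrasing of injectivity, going directly from $\Kmu\psi=0$ to $\Hmu[\psi\mu]=0$, only gives vanishing $\mu$-a.e. It is the quadratic-form identity you give next, as in the paper, that closes this step.
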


\begin{proof}
By Proposition~\ref{prop:operators}(ii), $\Kmu$ is compact and self-adjoint on $L^2(\mu)$ with $\Kmu1=1$, so it preserves $L^2_0(\mu)$, whereby its spectrum lies in $[0,q]$; it is injective there, since $\langle\Kmu\psi,\psi\rangle_{L^2(\mu)}=\|\Hmu[\psi\mu]\|_{\Hmu}^{2}$ by \eqref{eq:mmd-norm} for the self-kernel and $\Hmu[\cdot]$ is injective on measures (\autoref{subsec:rkhs}). The spectral theorem yields the eigenbasis, and (i) and (ii) follow by acting on it and by Parseval's identity.
\end{proof}

\begin{lemma}[The norm of $\Hmu$ in the eigenbasis]\label{lem:hmunorm}
Let $\mu\in\PX$, with $(e_j)_{j\in J}$, $\lambda_j$ and $c_j$ as in Lemma~\ref{lem:spectral}. For $\varphi\in L^2(\mu)$, $\|\Kmu\varphi\|_{\Hmu}^{2}=\langle\varphi,\Kmu\varphi\rangle_{L^2(\mu)}$; hence $\|\psi\|_{\Hmu}^{2}=\sum_jc_j^{2}/\lambda_j$ for $\psi\in\Kmu(L^2_0(\mu))$. The mean-zero subspace of $\Hmu$ is the $\Hmu$-orthogonal sum of the closure of $\Kmu(L^2_0(\mu))$ in $\Hmu$ and of $\{\varphi\in\Hmu:\varphi=0\ \mu\text{-a.e.}\}$; the second summand is trivial if $\mu$ has full support.
\end{lemma}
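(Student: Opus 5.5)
The first identity comes from the reproducing property. For $\varphi\in L^2(\mu)$ we have $\Kmu\varphi=\Hmu[\varphi\mu]$. By \eqref{eq:mmd-norm} for the self-kernel,
\[
\|\Kmu\varphi\|_{\Hmu}^{2}=\int_X\int_X k_\mu(x,y)\,\varphi(x)\varphi(y)\,d\mu(x)\,d\mu(y)=\langle\varphi,\Kmu\varphi\rangle_{L^2(\mu)}.
\]
Equivalently, this is Proposition~\ref{prop:operators}(i) applied with $\psi=\Kmu\varphi$ and $\Kmu\varphi$ in the first slot, extended from $\varphi\in\Hmu$ to $\varphi\in L^2(\mu)$ by density, using the reproducing identity \eqref{eq:kme-duality} directly. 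For $\psi=\Kmu\varphi\in\Kmu(L^2_0(\mu))$ we may take $\varphi\in L^2_0(\mu)$ by Lemma~\ref{lem:spectral}. Writing $\varphi=\sum_j(c_j/\lambda_j)e_j$ then gives $\langle\varphi,\Kmu\varphi\rangle=\sum_j c_j^2/\lambda_j$. The preimage is unique by the injectivity of $\Kmu$ on $L^2_0(\mu)$, so the formula is well defined.

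For the decomposition, I take $\psi$ in the mean-zero subspace of $\Hmu$, meaning $\int\psi\,d\mu=0$, and test it against range elements. Let $Z=\{\varphi\in\Hmu:\varphi=0\ \mu\text{-a.e.}\}$ and $R=\overline{\Kmu(L^2_0(\mu))}^{\Hmu}$. Proposition~\ref{prop:operators}(i) gives $\langle\psi,\Kmu\varphi\rangle_{\Hmu}=\langle\psi,\varphi\rangle_{L^2(\mu)}$ for all $\varphi\in L^2(\mu)$, first for $\varphi\in\Hmu$ and then by the reproducing property for general $\varphi$. This yields three facts:
\begin{itemize}
\item[(a)] $Z\perp R$, since an element of $Z$ has zero $L^2(\mu)$ pairing with every $\varphi$;
\item[(b)] $Z$ and $R$ lie in the mean-zero subspace, since $\int\Kmu\varphi\,d\mu=\int\varphi\,d\mu=0$ for $\varphi\in L^2_0(\mu)$, and the mean is $\Hmu$-continuous, being $\langle\cdot,1\rangle_{\Hmu}$ after $\Hmu[\mu]=1$;
\item[(c)] if a mean-zero $\psi$ is orthogonal to $R$, then $\langle\psi,\varphi\rangle_{L^2(\mu)}=0$ for all $\varphi\in L^2_0(\mu)$. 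Because $\psi$ also has $\mu$-mean zero, it is orthogonal in $L^2(\mu)$ to the constants, hence to all of $L^2(\mu)$, so $\psi=0$ $\mu$-a.e. and $\psi\in Z$.
\end{itemize}
Since $R$ is closed, the orthogonal decomposition of the mean-zero subspace as $R\oplus Z$ follows. If $\mu$ has full support, a continuous function vanishing $\mu$-a.e. vanishes everywhere, so $Z=0$.

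The only delicate point is extending Proposition~\ref{prop:operators}(i) from $\varphi\in\Hmu$ to $\varphi\in L^2(\mu)$. I would handle it directly from \eqref{eq:kme-duality}:
\[
\langle\Hmu[\varphi\mu],\psi\rangle_{\Hmu}=\int\psi\,\varphi\,d\mu,
\]
which holds for every finite measure $\varphi\mu$. So the extension is automatic.
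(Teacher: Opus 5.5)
Your proof is correct and follows the paper's route. Both arguments rest on the duality \eqref{eq:kme-duality} for $k_\mu$, $\langle\Kmu\varphi,\chi\rangle_{\Hmu}=\int_X\chi\varphi\,d\mu$. It is used once with $\chi=\Kmu\varphi$ to get the norm identity, and once to show that a mean-zero element of $\Hmu$ is orthogonal to $\Kmu(L^2_0(\mu))$ exactly when it vanishes $\mu$-a.e. You also spell out two points the paper leaves implicit: the mean-zero subspace is closed, since $\int_X\psi\,d\mu=\langle\psi,1\rangle_{\Hmu}$, and the closure of $\Kmu(L^2_0(\mu))$ lies in it.
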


\begin{proof}
The duality \eqref{eq:kme-duality} for $k_\mu$ yields $\langle\Kmu\varphi,\chi\rangle_{\Hmu}=\int_X\chi\varphi\,d\mu$ for $\chi\in\Hmu$; with $\chi=\Kmu\varphi$ and $\varphi=\sum_jd_je_j$ this is $\sum_j\lambda_jd_j^{2}=\sum_jc_j^{2}/\lambda_j$, and the same duality shows that a $\varphi$ of $\mu$-mean zero in $\Hmu$ is orthogonal to $\Kmu(L^2_0(\mu))$ exactly when it vanishes $\mu$-a.e., hence everywhere if $\operatorname{supp}\mu=X$.
\end{proof}

On measures with a density, the Riesz map and the cotangent form become explicit. If $\Pi_0f=\Kmu\psi$ in $C(X)$ with $\psi\in L^2_0(\mu)$, since $\Hmu[\psi\mu]=\Kmu\psi$ by \eqref{eq:Kmu}, we obtain
\begin{equation}\label{eq:riesz-density}
f^\sharp=\frac{2}{\eps}\left(\Kmu^{-1}-\Kmu\right)\Pi_0f\cdot\mu,
\qquad
\langle\langle f,g\rangle\rangle_\mu=\frac{2}{\eps}\left\langle\left(\Kmu^{-1}-\Kmu\right)\Pi_0f,\ \Pi_0g\right\rangle_{L^2(\mu)},
\end{equation}
for $f,g$ with $\Pi_0f,\Pi_0g\in\Kmu\left(L^2_0(\mu)\right)$, where $\Kmu^{-1}\Pi_0f$ denotes the unique $\psi\in L^2_0(\mu)$ with $\Kmu\psi=\Pi_0f$ in $C(X)$ (Lemma~\ref{lem:spectral}(i)). By \eqref{eq:selfnorm}, for $w\in L^2(\mu)$,
\begin{equation}\label{eq:dirichlet}
\left\langle(\id-\Kmu)\,w,w\right\rangle_{L^2(\mu)}
=\frac12\int_X\int_X\left(w(x)-w(y)\right)^{2}k_\mu(x,y)\,d\mu(x)\,d\mu(y)
=:\mathcal E_\mu(w,w),
\end{equation}
the Dirichlet form of the reversible chain of \autoref{subsec:operators} with transition density $k_\mu$ and invariant measure $\mu$. In the eigenbasis of Lemma~\ref{lem:spectral}, with $c_j$ the coefficients of $\Pi_0f$,
\[
\langle\langle f,f\rangle\rangle_\mu=\frac{2}{\eps}\sum_j\frac{1+\lambda_j}{\lambda_j}\,(1-\lambda_j)\,c_j^{2},
\qquad
\mathcal E_\mu(\Pi_0f,\Pi_0f)=\sum_j(1-\lambda_j)\,c_j^{2}.
\]
So the cotangent form of the Sinkhorn metric is a spectral reweighting of the Dirichlet form of the Sinkhorn chain, with the weight $(1+\lambda)/\lambda$ on the eigenvalues $\lambda\in(0,q]$, and since $(1+\lambda)/\lambda\ge2$ for $0<\lambda\le1$, $\langle\langle f,f\rangle\rangle_\mu\ge\frac{4}{\eps}\,\mathcal E_\mu(\Pi_0f,\Pi_0f)$. The reweighted form is not a Dirichlet form in general: its domain is the mean-zero part of $\Hmu$ (Lemma~\ref{lem:hmunorm}), and at the uniform measure on the circle with the Mat\'ern kernel of \autoref{sec:stratum} and $s\ge2$ this is the mean-zero part of $H^{s}(\mathbb T)$, which $f\mapsto|f|$ does not preserve. The Otto metric has in this place the Dirichlet form $\int_X|\nabla w|^{2}\,d\mu$ of the diffusion with generator $\Delta_\mu=\Delta+\langle\nabla\log\rho,\nabla\cdot\rangle$, Brownian motion with drift $\nabla\log\rho$.

\begin{theorem}[Gradients of mean values]\label{thm:gradients}
Let $\mu\in\PX$.
(i) For $f\in\mathcal D_\mu$ there is exactly one $a\in\MzX$ with $g_\mu(a,b)=\int_X f\,db$ for all $b\in\MzX$, namely $a=f^\sharp$. It is the gradient $\operatorname{grad}_\mu E(f)$ of the mean-value function with respect to the Sinkhorn metric.
(ii) For $f\in\CX\setminus\mathcal D_\mu$ no such $a$ exists.
(iii) Every $a\in\MzX$ is a gradient, $a=\operatorname{grad}_\mu E(a^\flat)$. Consequently, a bilinear form $g'$ on $\MzX$ with $g'(f^\sharp,b)=\int_X f\,db$ for all $f\in\mathcal D_\mu$ and $b\in\MzX$ equals $g_\mu$.
\end{theorem}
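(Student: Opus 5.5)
The whole argument reduces to Lemma~\ref{lem:flat}, which says that $b\mapsto b^\flat$ is a linear injection of $\MzX$ into $C_0(X,\mu)$ with image characterized in part (iii). The key observation is that the defining condition of a gradient, $g_\mu(a,b)=\int_X f\,db$ for all $b\in\MzX$, is the equality of covectors $a^\flat=f+\mathbb R$ in $\CX/\mathbb R$. Here we use the symmetry of $g_\mu$ (\autoref{subsec:secondvar}), so that $g_\mu(a,b)=\langle b,a^\flat\rangle$. Since $\int_X\Pi_0f\,db=\int_X f\,db$ for balanced $b$, the condition reads $\langle b,a^\flat-\Pi_0f\rangle=0$ for every $b\in\MzX$.

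The first step is to show that a continuous function $w$ with $\int_X w\,db=0$ for all balanced $b$ is constant. This follows by testing against $b=\delta_x-\delta_y$, which gives $w(x)=w(y)$. Since both $a^\flat$ and $\Pi_0f$ lie in $C_0(X,\mu)$, their constant difference has $\mu$-mean zero and therefore vanishes. The gradient condition is thus equivalent to the pointwise identity $a^\flat=\Pi_0f$ in $C_0(X,\mu)$.

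Parts (i) and (ii) then follow from Lemma~\ref{lem:flat}.
\begin{itemize}
\item For (i): if $f\in\mathcal D_\mu$, then $a=f^\sharp$ satisfies $(f^\sharp)^\flat=\Pi_0f$ by Definition~\ref{def:riesz} and Lemma~\ref{lem:flat}(iii), so it is a solution. Any other solution $a'$ has $a'^\flat=\Pi_0f=a^\flat$, hence $a'=a$ by the injectivity in Lemma~\ref{lem:flat}(i).
\item For (ii): if some $a$ exists, then $\Pi_0f=a^\flat$ lies in the image of $\flat$. By Lemma~\ref{lem:flat}(iii) this means $(\id-\Kmu^2)\Pi_0f\in\Hmu[\MX]$, i.e.\ $f\in\mathcal D_\mu$.
\end{itemize}

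For (iii), given $a\in\MzX$, take $f:=a^\flat\in C_0(X,\mu)$. Then $\Pi_0f=f=a^\flat$, so Lemma~\ref{lem:flat}(iii) gives $f\in\mathcal D_\mu$, and (i) yields $\operatorname{grad}_\mu E(f)=f^\sharp=a$.

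For the uniqueness statement, let $a,b\in\MzX$ and write $a=f^\sharp$ with $f=a^\flat$. Then
\[
g'(a,b)=\int_X a^\flat\,db=g_\mu(a,b).
\]
Thus $g'=g_\mu$ on all of $\MzX\times\MzX$.

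The only delicate point is the passage from equality of covectors modulo constants to pointwise equality of representatives. Two facts handle it: testing with the balanced measures $\delta_x-\delta_y$, and the normalization of both functions to $\mu$-mean zero. Together they avoid any issue with the support of $\mu$, which is why the identities hold in $C(X)$ everywhere as in the resolvent convention.
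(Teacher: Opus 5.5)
Your proposal is correct and follows the paper's proof essentially step for step. The symmetry of $g_\mu$ turns the gradient condition into $\langle b,a^\flat-\Pi_0f\rangle=0$ for all balanced $b$; testing with $\delta_x-\delta_y$ and using the $\mu$-mean-zero normalization gives $a^\flat=\Pi_0f$ pointwise; and Lemma~\ref{lem:flat} then yields (i)--(iii), with the only cosmetic difference being that you take uniqueness from the injectivity in Lemma~\ref{lem:flat}(i) where the paper reads it off the explicit formula in Lemma~\ref{lem:flat}(iii).
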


\begin{proof}
By Lemma~\ref{lem:flat} and the symmetry of $g_\mu$, $g_\mu(a,b)=\langle b,a^\flat\rangle$. Hence $g_\mu(a,\cdot)=\int_X f\,d(\cdot)$ on $\MzX$ if and only if $\langle b,a^\flat-f\rangle=0$ for all $b\in\MzX$, if and only if $a^\flat-f$ is constant, by the choice $b=\delta_x-\delta_y$, if and only if $a^\flat=\Pi_0f$, since $a^\flat$ has $\mu$-mean zero. By Lemma~\ref{lem:flat}(iii) such an $a$ exists if and only if $(\id-\Kmu^{2})\,\Pi_0f\in\Hmu[\MX]$, that is $f\in\mathcal D_\mu$, and then $a=f^\sharp$; this is (i) and (ii). For (iii), $f:=a^\flat$ lies in $C_0(X,\mu)$ with $(\id-\Kmu^{2})f=\frac{\eps}{2}\Hmu[a]\in\Hmu[\MX]$, so $f\in\mathcal D_\mu$ and $f^\sharp=a$; and for $a,b\in\MzX$, $g'(a,b)=g'\left((a^\flat)^\sharp,b\right)=\int_X a^\flat\,db=g_\mu(a,b)$.
\end{proof}

\begin{remark}[The FR, Otto and Sinkhorn Riesz maps]\label{rem:rieszmaps}
For the Fisher-Rao metric, $f^\sharp=\Pi_0f\,\mu$ for every $f\in\CX$ and every $\mu$, and the cotangent form is the variance $\int_X(\Pi_0f)^{2}\,d\mu$. For the Otto metric on a closed Riemannian manifold and $\mu$ with a smooth positive density $\rho$, $f^\sharp=-\Delta_\mu f\,\mu$ for $f\in C^{2}$, with $\Delta_\mu f=\Delta f+\langle\nabla\log\rho,\nabla f\rangle$ \citep[Theorem~1]{Ay2024Otto}, and the cotangent form is $\int_X|\nabla f|^{2}\,d\mu$, the Dirichlet form of the diffusion with generator $\Delta_\mu$, which is reversible for $\mu$. For the Sinkhorn metric, under Assumption~\ref{assump:standing} and for every $\mu$, $f^\sharp=\frac{2}{\eps}(\Kmu^{-1}-\Kmu)\,\Pi_0f\cdot\mu$ for $f$ with $\Pi_0f\in\Kmu(L^2_0(\mu))$, and the cotangent form is the spectral reweighting \eqref{eq:dirichlet} of the Dirichlet form of the Sinkhorn chain. So the three metrics \textit{are of the same kind}: their cotangent forms are built from a process reversible for $\mu$, the Dirichlet forms of independent resampling from $\mu$ and of the diffusion with generator $\Delta_\mu$, and a spectral reweighting of the Dirichlet form of the Sinkhorn chain. \citet[Definition~1]{AySchwachhoefer2025Torsion} write a metric of this kind as the Fisher-Rao metric twisted by an operator, $g_\mu(\psi\mu,\chi\mu)=\langle\Theta_\mu\psi,\chi\rangle_{L^2(\mu)}$, with $\Theta_\mu=\id$ for Fisher-Rao and $\Theta_\mu=(-\Delta_\mu)^{-1}$ for Otto. For the Sinkhorn metric,
\begin{equation}\label{eq:Theta}
\Theta_\mu=\frac{\eps}{2}\left(\Kmu^{-1}-\Kmu\right)^{-1}=\frac{\eps}{2}\,\Kmu\left(\id-\Kmu^{2}\right)^{-1}=\frac{\eps}{2}\sum_{j\ge0}\Kmu^{2j+1}
\quad\text{on }L^2_0(\mu),
\end{equation}
the series converging since $q<1$; this is Theorem~\ref{thm:hessian} with $\Hmu[\psi\mu]=\Kmu\psi$. On the torus with the Mat\'ern kernel and $\mu$ with a smooth positive density, $\Kmu=M_{\amu}\,S\,M_{\amu\rho/\rho_0}$, with $\rho_0$ the density of $\mu_0$ with respect to the volume, is a bijection of the smooth functions of $\mu$-mean zero, since $S$ is one and $\amu$ is smooth, so the Sinkhorn metric is a regular metric in their sense, and their $\alpha$-connections, the mixture connection and its dual, are defined (Remark~\ref{rem:selfdual}).
\end{remark}

\begin{remark}[The limits in $\eps$]\label{rem:epslimits}
The two limits concern two different families of kernels. As $\eps\to0$ the cost is held fixed. In the case of $X=\mathbb T^{d}$ with the periodized Gaussian kernel and of $\mu=\rho\,dx$ with $\rho$ smooth and positive, the self-potential has the Laplace expansion $\fmu=-\frac{\eps}{2}\log\rho+\text{const}+O(\eps^{2})$ \citep[Theorem~1]{Pal2019EntropicOTDiff}, \citep[Theorem~1.6]{ConfortiTamanini2021}. Formally, this expansion yields $\frac{2}{\eps}(\Kmu^{-1}-\Kmu)\to-\Delta_\mu$, so the Riesz map and the cotangent form of the Sinkhorn metric tend to those of the Otto metric. The sphere embedding $B(\mu)=\amu^{-1}$, normalized in $L^{2}(dx)$, tends to the Hellinger embedding $\mu\mapsto\sqrt\rho$. The limits are formal because the expansion $\Kmu=\id+\frac{\eps}{4}\Delta_\mu+o(\eps)$ of \citet[Section~4.4]{lavenant_luckhardt_mordant_et_al_riemannian_2025} does not extend to $\Kmu^{-1}$, whose domain is a proper subspace of the smooth functions for every $\eps$. At the uniform measure, the Otto limit holds exactly on trigonometric polynomials, which lie in $\mathcal D_\mu$ for every $\eps$: the self-potential is constant, the eigenvalue of $\Kmu$ on $\cos(2\pi n\cdot x)$ is $e^{-\pi^{2}\eps|n|^{2}}$, and the Riesz map acts on this cosine by $\frac{4}{\eps}\sinh(\pi^{2}\eps|n|^{2})$, which tends to $4\pi^{2}|n|^{2}$, the eigenvalue of $-\Delta$, as $\eps\to0$, and to infinity as $\eps\to\infty$. As $\eps\to\infty$ it is the kernel that varies, $k_c=e^{-c/\eps}$ for a fixed continuous cost $c$ with Assumption~\ref{assump:standing} for every $\eps$: since $0\le\fmu\le\|c\|_\infty$ by \eqref{eq:selfnorm} and $k_c=1-c/\eps+O(\eps^{-2})$, $\Kmu=-\frac{1}{\eps}\,\Pi_0C_\mu\Pi_0+O(\eps^{-2})$ in operator norm on $L^2_0(\mu)$, uniformly in $\mu$, where $(C_\mu\psi)(y)=\int_X c(x,y)\,\psi(x)\,d\mu(x)$, so $\Theta_\mu\to-\frac12\Pi_0C_\mu\Pi_0$ and the metric tensor converges on density perturbations to the constant form $-\frac12\int_X\int_X c\,d\dot\mu\,d\dot\mu$ of the squared maximum mean discrepancy with kernel $-c/2$, the limit of $\Se$ \citep{RamdasGarciaTrillosCuturi2017,feydy_sejourné_vialard_amari_trouvé_peyré_interpolating_2019}. This form may be degenerate, (for the quadratic cost it is $\left|\int_X x\,d\dot\mu(x)\right|^{2}$ \citep[Theorem~4.17]{lavenant_luckhardt_mordant_et_al_riemannian_2025}), so the Riesz maps do not have a limit in general; for a fixed Mat\'ern kernel of \autoref{sec:stratum} the operators do not depend on $\eps$, and $g_\mu$ is linear in $\eps$.
\end{remark}

\subsection{Comparison with the round metric of the unit sphere}\label{subsec:positivity}

The correction $Q_\mu$ in \eqref{eq:gbeta} is strictly positive on measures of full support, and \citet[Proposition~4.15]{lavenant_luckhardt_mordant_et_al_riemannian_2025} bound it by a multiple of the round term uniformly in $\mu$; we obtain their bound with an improved constant.

\begin{proposition}[Strict positivity of the correction]\label{prop:positivity}
Let $\mu\in\PX$, $\beta=B(\mu)$ and $v\in\beta^{\perp}$. Then
\[
Q_\mu(v)\ \ge\ \|\amu v\|_{L^{2}(\mu)}^{2},
\]
with equality if and only if $v=0$ $\mu$-almost everywhere, and $Q_\mu(v)=0$ in that case. In particular, if $\mu$ has full support, then $Q_\mu(v)>\|\amu v\|_{L^{2}(\mu)}^{2}>0$ for every $v\in\beta^{\perp}$ with $v\ne0$.
\end{proposition}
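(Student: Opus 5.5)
Set $\psi:=\amu v$. By \eqref{eq:tangency} and $v\in\beta^{\perp}$, $\psi\in L^2_0(\mu)$. Since $v\in\Hc\subset\CX$ and $\amu$ is continuous, $\psi$ is a bounded continuous function. The plan is to compare $(\id-\Kmu)^{-1}$ with the identity on $L^2_0(\mu)$ spectrally. We write $\psi$ in the orthonormal eigenbasis $(e_j)$ of Lemma~\ref{lem:spectral}, with coefficients $c_j=\langle\psi,e_j\rangle_{L^2(\mu)}$ and eigenvalues $\lambda_j\in(0,q]$. Lemma~\ref{lem:spectral}(ii), applied with $\phi(\lambda)=1/(1-\lambda)$ (bounded on $(0,q]$ since $q<1$), gives
\[
Q_\mu(v)=\sum_j\frac{c_j^{2}}{1-\lambda_j}
\qquad\text{and}\qquad
\|\amu v\|_{L^2(\mu)}^{2}=\sum_jc_j^{2}.
\]

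The key step is that every $\lambda_j$ is \emph{strictly} positive, by the injectivity of $\Kmu$ on $L^2_0(\mu)$ in Lemma~\ref{lem:spectral}. Hence $1/(1-\lambda_j)>1$ for every $j$, and
\[
Q_\mu(v)-\|\amu v\|_{L^2(\mu)}^{2}=\sum_j\frac{\lambda_j}{1-\lambda_j}\,c_j^{2}\ \ge\ 0.
\]
Equality holds if and only if every $c_j$ vanishes, that is, $\psi=0$ in $L^2_0(\mu)$. This means $\amu v=0$ $\mu$-a.e., and since $\amu>0$ everywhere, $v=0$ $\mu$-a.e. In that case both sides are zero, so $Q_\mu(v)=0$. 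The only care needed is that one uses the sharp statement $\lambda_j>0$ rather than the weak bound $(\id-\Kmu)^{-1}\ge\id$ of Proposition~\ref{prop:operators}(iii), which alone does not give the equality case.

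For the full-support statement, let $v\ne0$ in $\beta^\perp$. Since $v$ is continuous and $\operatorname{supp}\mu=X$, the set $\{v\ne0\}$ is a nonempty open set and so has positive $\mu$-measure. Hence $v\ne0$ $\mu$-a.e. fails, and the first part gives the strict inequality $Q_\mu(v)>\|\amu v\|_{L^2(\mu)}^{2}$. Moreover $\|\amu v\|_{L^2(\mu)}^{2}>0$, because $\amu v$ is continuous and nonzero on an open set of positive measure. No step is a real obstacle once injectivity is available.
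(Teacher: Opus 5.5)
Your proof is correct and is essentially the paper's argument. The paper also obtains the inequality from $(\id-\Kmu)^{-1}\ge\id$ on $L^2_0(\mu)$ and the equality case from the injectivity of $\Kmu$ there, via $(\id-\Kmu)^{-1}-\id=\Kmu(\id-\Kmu)^{-1}$ and a square root, where you use the expansion $\sum_j\lambda_j(1-\lambda_j)^{-1}c_j^2$ with $\lambda_j>0$. The only slip is verbal: in the full-support step you mean that ``$v=0$ $\mu$-a.e.'' fails, not ``$v\ne0$ $\mu$-a.e.''
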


\begin{proof}
Let $w=\amu v$; by \eqref{eq:tangency}, $w\in L^{2}_{0}(\mu)$, and $R=(\id-\Kmu)^{-1}$ satisfies $R\ge\id$ there (Proposition~\ref{prop:operators}(iii)), so $Q_\mu(v)=\langle w,Rw\rangle_{L^{2}(\mu)}\ge\|w\|_{L^{2}(\mu)}^{2}$. For the equality case, $R-\id=\Kmu R$, and since $R$ is positive and commutes with $\Kmu$, $\langle w,\Kmu Rw\rangle_{L^{2}(\mu)}=\langle R^{1/2}w,\Kmu R^{1/2}w\rangle_{L^{2}(\mu)}$ vanishes if and only if $\Kmu R^{1/2}w=0$, that is, $\Kmu w=0$; and $\Kmu$ is injective on $L^{2}_{0}(\mu)$ (Lemma~\ref{lem:spectral}), so equality holds if and only if $w=0$ in $L^{2}(\mu)$, which is $v=0$ $\mu$-almost everywhere because $\amu>0$. If $\operatorname{supp}\mu=X$, a function of $\Hc\subset\CX$ that vanishes $\mu$-almost everywhere vanishes on a dense set, hence everywhere.
\end{proof}

The condition on the support cannot be dropped: for $\mu=\delta_x$ the space $L^{2}_{0}(\delta_x)$ is trivial, $Q_{\delta_x}\equiv0$, the tangent space $\beta^{\perp}$ consists of the $v\in\Hc$ with $v(x)=0$ by Example~\ref{ex:dirac}, and $\tilde g_{\delta_x}(v,v)=\frac{\eps}{2}\|v\|_{\Hc}^{2}$ is the round metric alone.

\begin{proposition}[{Uniform equivalence with the round metric; \citep[Proposition~4.15]{lavenant_luckhardt_mordant_et_al_riemannian_2025}}]\label{prop:sandwich}
Let $\mu\in\PX$, $\beta=B(\mu)$, and let $q=1-e^{-\|c\|_\infty/\eps}$ be the constant of Proposition~\ref{prop:operators}. For every $v\in\beta^{\perp}$,
\begin{equation}\label{eq:aequivalenz}
\frac{\eps}{2}\,\|v\|_{\Hc}^{2}
\ \le\ \tilde g_\mu(v,v)
\ \le\ C\,\frac{\eps}{2}\,\|v\|_{\Hc}^{2},
\qquad
C=\frac{1+q}{1-q}=2\,e^{\|c\|_\infty/\eps}-1,
\end{equation}
equivalently $0\le Q_\mu(v)\le\frac{q}{1-q}\|v\|_{\Hc}^{2}$, with $\frac{q}{1-q}=e^{\|c\|_\infty/\eps}-1$. The constant $C$ does not depend on $\mu$.
\end{proposition}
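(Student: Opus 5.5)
The lower bound is immediate from $Q_\mu\ge0$, which Proposition~\ref{prop:positivity} gives (indeed $Q_\mu(v)\ge\|\amu v\|^2_{L^2(\mu)}\ge0$). So the task is the upper bound $Q_\mu(v)\le\frac{q}{1-q}\|v\|_{\Hc}^2$. Multiplying by $\frac{\eps}{2}$ and using \eqref{eq:gbeta} then gives $\tilde g_\mu(v,v)\le\frac{\eps}{2}\bigl(1+\frac{2q}{1-q}\bigr)\|v\|_{\Hc}^2=C\frac{\eps}{2}\|v\|_{\Hc}^2$, with $C=\frac{1+q}{1-q}$. Substituting $q=1-e^{-\|c\|_\infty/\eps}$ yields the stated closed forms.

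For the upper bound, set $w=\amu v\in L^2_0(\mu)$ by \eqref{eq:tangency}. Lemma~\ref{lem:tilt-isometry} gives $\|v\|_{\Hc}=\|w\|_{\Hmu}$, so both sides can be written in terms of $w$, and we must show
\[
\left\langle w,(\id-\Kmu)^{-1}w\right\rangle_{L^2(\mu)}\le\frac{q}{1-q}\,\|w\|_{\Hmu}^2 .
\]
The route is through Lemma~\ref{lem:hmunorm}. It decomposes the mean-zero part of $\Hmu$ as the closure of $\Kmu(L^2_0(\mu))$ plus functions vanishing $\mu$-a.e. The second summand contributes nothing to the left side, since that side only sees the $\mu$-class of $w$. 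It also only increases $\|w\|_{\Hmu}^2$, because the decomposition is orthogonal. We may therefore assume $w$ lies in the closure of $\Kmu(L^2_0(\mu))$ and write $\|w\|_{\Hmu}^2\ge\sum_j c_j^2/\lambda_j$, where $c_j=\langle w,e_j\rangle_{L^2(\mu)}$.

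I expect this identification to be the delicate step. For $w$ in the closure rather than in $\Kmu(L^2_0(\mu))$ itself, one needs $\|w\|^2_{\Hmu}\ge\sum_j c_j^2/\lambda_j$. I would obtain it by approximating $w$ in $\Hmu$ by $\psi_n\in\Kmu(L^2_0(\mu))$. The inclusion $\Hmu\to L^2(\mu)$ is continuous (its norm is bounded by $\sup\amu$), so the coefficients converge, and Fatou's lemma for the series then gives the inequality.

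Granting this, Lemma~\ref{lem:spectral}(ii) gives $Q_\mu=\sum_j(1-\lambda_j)^{-1}c_j^2$. The function $\lambda\mapsto\frac{\lambda}{1-\lambda}$ is nondecreasing on $(0,q]$, and applying Lemma~\ref{lem:spectral}(ii) with $\phi_1=(1-\lambda)^{-1}$ and $\phi_2=\lambda^{-1}$ yields
\[
\sum_j\frac{c_j^2}{1-\lambda_j}\le\frac{q}{1-q}\sum_j\frac{c_j^2}{\lambda_j}.
\]
The spectrum of $\Kmu$ on $L^2_0(\mu)$ lies in $(0,q]$ by Proposition~\ref{prop:operators}(ii), with the improved Doeblin constant. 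This is exactly where the better constant enters: the argument is that of Lavenant et al., but with this $q$.
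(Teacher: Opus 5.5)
Your proof is correct, but it takes a different route from the paper's.

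\textbf{The paper's route.} The paper never expands $w=\amu v$ in the $L^2(\mu)$ eigenbasis. By Proposition~\ref{prop:operators}(i) and the self-adjointness of $\Kmu$ on $\Hmu$, it rewrites $Q_\mu(v)=\langle w,Rw\rangle_{L^2(\mu)}=\langle w,\Kmu Rw\rangle_{\Hmu}$ with $R=(\id-\Kmu)^{-1}$. It then applies the spectral theorem to $\Kmu$ on the mean-zero subspace of $\Hmu$, which is the $\Hmu$-orthogonal complement of the constants. There the spectrum lies in $[0,q]$ and the symbol $\lambda/(1-\lambda)$ is at most $q/(1-q)$. The part of $w$ that vanishes $\mu$-a.e.\ lies in the kernel of $\Kmu$ and is absorbed by the value $0$ of the symbol at $\lambda=0$. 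So neither an orthogonal splitting nor a closure argument is needed.

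\textbf{Your route.} Working in $L^2(\mu)$, you have to express $\|w\|_{\Hmu}$ through the coefficients $c_j$. This is what Lemma~\ref{lem:hmunorm}, the splitting off of the $\mu$-null component, and your Fatou step supply. All three are sound: the coefficient functionals are continuous on $\Hmu$ because $|\varphi|\le\amu\|\varphi\|_{\Hmu}$ pointwise, and your inequality is in fact an equality.

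\textbf{Comparison.} Your route gives a single eigenbasis in which the estimate is a termwise comparison of the symbols $1/(1-\lambda_j)$ and $1/\lambda_j$. It also makes visible that the $\mu$-null part of $v$ only enlarges the right side. The paper's route is shorter and needs no description of the $\Hmu$ norm in the $L^2(\mu)$ basis.

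\textbf{Two small points.} First, Lemma~\ref{lem:spectral}(ii) is stated for bounded symbols, and $\lambda^{-1}$ is unbounded on $(0,q]$. Justify the comparison directly by $\frac{1}{1-\lambda_j}=\frac{\lambda_j}{1-\lambda_j}\cdot\frac{1}{\lambda_j}\le\frac{q}{1-q}\cdot\frac{1}{\lambda_j}$; the right-hand series is finite because it is bounded by $\|w\|_{\Hmu}^{2}$. Second, your argument is not quite that of Lavenant et al. According to the paper, they compare in $L^2(\mu)$ with cruder bounds and obtain $C=1+2e^{11\|c\|_\infty/(2\eps)}$. You recover the sharp factor $q/(1-q)$ because Lemma~\ref{lem:hmunorm} gives the $\Hmu$ norm exactly, and this comes on top of the Doeblin constant.
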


\begin{proof}
Let $w=\amu v$ and $R=(\id-\Kmu)^{-1}$. By Lemma~\ref{lem:tilt-isometry}, $w\in\Hmu$ with $\|w\|_{\Hmu}=\|v\|_{\Hc}$, so the round term is $\|w\|_{\Hmu}^{2}$, and we write $Q_\mu$ in the inner product of $\Hmu$ to compare the two. The function $w$ has mean zero under $\mu$ by \eqref{eq:tangency}, so it lies in the mean-zero subspace of $\Hmu$, and so does $Rw$, by Proposition~\ref{prop:operators}(iii). Proposition~\ref{prop:operators}(i) and the self-adjointness of $\Kmu$ on $\Hmu$ yield
\[
Q_\mu(v)=\left\langle w,Rw\right\rangle_{L^{2}(\mu)}=\left\langle\Kmu w,Rw\right\rangle_{\Hmu}=\left\langle w,\Kmu Rw\right\rangle_{\Hmu}.
\]
On the mean-zero subspace of $\Hmu$ the spectrum of $\Kmu$ lies in $[0,q]$ by Proposition~\ref{prop:operators}(ii), and $\lambda\mapsto\lambda/(1-\lambda)$ is increasing on $[0,q]$. By the spectral theorem, $0\le Q_\mu(v)\le\frac{q}{1-q}\|w\|_{\Hmu}^{2}=\frac{q}{1-q}\|v\|_{\Hc}^{2}$. Inserting this into \eqref{eq:gbeta} and using $1+\frac{2q}{1-q}=\frac{1+q}{1-q}$ yields \eqref{eq:aequivalenz}.
\end{proof}

\citet[Proposition~4.15]{lavenant_luckhardt_mordant_et_al_riemannian_2025} argue in $L^{2}(\mu)$ instead, with their constants, and obtain $C=1+2e^{11\|c\|_\infty/(2\eps)}$. The constant $C$ depends on $\eps$ and the cost only through the lower bound $1-q=e^{-\|c\|_\infty/\eps}$ of Proposition~\ref{prop:operators} for the spectral gap of the chain: it tends to $1$ as $\eps\to\infty$, where the correction becomes small against the round term, and grows like $2e^{\|c\|_\infty/\eps}$ as $\eps\to0$, where the gap closes.

\subsection{The weak Riemannian metric on the smooth stratum}\label{subsec:stratummetric}

In the parametrization $h$ of the smooth stratum, with Assumptions~\ref{assump:standing} and~\ref{assump:stratum} in place, the metric tensor of Definition~\ref{def:metric} becomes an explicit bilinear form on $T_hU_\sigma$, with the tilt, the transition operator and the resolvent expressed through $h$ and the operator $S$ of \eqref{eq:Sop} (Proposition~\ref{prop:stratumflat}). Throughout, $h\in U_\sigma$, $\mu_h$ is the measure \eqref{eq:hchart}, $a=a_{\mu_h}=1/(Sh)$, $B(\mu_h)=Sh$ and $T_hU_\sigma=\{v\in H^{\sigma}(X):\langle v,Sh\rangle_0=0\}$ by Proposition~\ref{prop:stratum}, and $R=(\id-K_{\mu_h})^{-1}$ on $L^{2}_{0}(\mu_h)$ in the resolvent convention. Since $d\mu_h=h\,(Sh)\,d\mu_0$ with a continuous positive density, the spaces $L^{2}(\mu_h)$ and $L^{2}(\mu_0)$ coincide as sets, with equivalent norms; their inner products differ by the weight $h\,(Sh)$, and $\langle\cdot,\cdot\rangle_0$ is that of $L^{2}(\mu_0)$. A curve $(h_t)$ in $U_\sigma$ differentiable in $H^{\sigma}(X)$ at $t=0$ with velocity $v$ has the curve of measures $(\mu_{h_t})$, differentiable in total variation with velocity
\begin{equation}\label{eq:tangentialmass}
\dot\mu=\left(v\,Sh+h\,Sv\right)\mu_0=\psi_v\,\mu_h,
\qquad
\psi_v:=\frac{v}{h}+\frac{Sv}{Sh},
\end{equation}
a balanced measure with the continuous density $\psi_v$ with respect to $\mu_h$, the logarithmic derivative of the density $h\,(Sh)$ of Remark~\ref{rem:ajls}. The metric tensor $g_{\mu_h}$ of Definition~\ref{def:metric} applies to such measures, and the stratum metric is its pullback,
\begin{equation}\label{eq:gsigma}
g^{\sigma}_{h}:T_hU_\sigma\times T_hU_\sigma\to\mathbb R,
\qquad
g^{\sigma}_{h}(v_1,v_2):=g_{\mu_h}\left(\psi_{v_1}\mu_h,\ \psi_{v_2}\mu_h\right),
\end{equation}
whose quadratic form is the second-order rate \eqref{eq:gdef} of $\Se$ along the segment through $\mu_h$ with velocity $\dot\mu$ (Theorem~\ref{thm:hessian}). The link with the sphere coordinate is the identity
\begin{equation}\label{eq:bridge}
H_{\mu_h}[\dot\mu]=\left(\id+K_{\mu_h}\right)(a\,Sv),
\end{equation}
which follows from $k_{\mu_h}(x,y)=a(x)\,k_c(x,y)\,a(y)$ and \eqref{eq:faktorisierung}:
\[
\begin{aligned}
H_{\mu_h}[\dot\mu](y)&=a(y)\int_X k_c(x,y)\left(v(x)+a(x)\,h(x)\,(Sv)(x)\right)d\mu_0(x)\\
&=a\,Sv+a\,S(h\,a\,Sv).
\end{aligned}
\]
The function $a\,Sv$ has $\mu_h$-mean zero, since $\int_X a\,Sv\,d\mu_h=\langle Sv,h\rangle_0=\langle v,Sh\rangle_0=0$, which is \eqref{eq:tangency} for $w=Sv$: the sphere velocity is tangent (Proposition~\ref{prop:stratum}(iii)).

\begin{proposition}[The stratum metric in the chart $h$]\label{prop:stratumflat}
Let $h\in U_\sigma$, let $M_g$ denote multiplication by a function $g$, and define
\begin{equation}\label{eq:Th}
T_h:T_hU_\sigma\to L^{2}(\mu_0),
\qquad
T_h:=\id+2\,M_{h}\,R\,M_{a}\,S.
\end{equation}
Then, for $v,v_1,v_2\in T_hU_\sigma$:
(i) with $\Theta_{\mu_h}$ as in \eqref{eq:Theta} and $\tilde g_{\mu_h}$ as in \eqref{eq:gbeta-bilinear},
\begin{equation}\label{eq:stratum-metric}
g^{\sigma}_{h}(v_1,v_2)=\left\langle\Theta_{\mu_h}\psi_{v_1},\psi_{v_2}\right\rangle_{L^{2}(\mu_h)}=\tilde g_{\mu_h}(Sv_1,Sv_2)=\frac{\eps}{2}\,\left\langle v_1,\ S\,T_h\,v_2\right\rangle_0;
\end{equation}
(ii) with $C=(1+q)/(1-q)$ as in Proposition~\ref{prop:sandwich},
\begin{equation}\label{eq:stratum-sandwich}
\frac{\eps}{2}\,\langle v,Sv\rangle_0
\ \le\ g^{\sigma}_{h}(v,v)
\ \le\ C\,\frac{\eps}{2}\,\langle v,Sv\rangle_0,
\end{equation}
where the first inequality is strict for $v\ne0$; in particular $g^{\sigma}_h$ is a positive definite symmetric bilinear form on $T_hU_\sigma$.
\end{proposition}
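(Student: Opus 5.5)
The plan is to prove (i) as a chain of three identities, starting from Definition~\ref{def:metric}, and then to deduce (ii) from Proposition~\ref{prop:sandwich} and Proposition~\ref{prop:positivity} applied to the sphere velocity $Sv$.

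\emph{First equality in (i).} By \eqref{eq:Kmu}, $H_{\mu_h}[\psi\mu_h]=K_{\mu_h}\psi$. The density $\psi_v$ lies in $L^2_0(\mu_h)$, because $\dot\mu$ is balanced. Definition~\ref{def:metric} then gives
\[
g^\sigma_h(v_1,v_2)=\tfrac{\eps}{2}\langle\psi_{v_1},(\id-K_{\mu_h}^2)^{-1}K_{\mu_h}\psi_{v_2}\rangle_{L^2(\mu_h)}.
\]
This equals $\langle\Theta_{\mu_h}\psi_{v_1},\psi_{v_2}\rangle_{L^2(\mu_h)}$ by \eqref{eq:Theta} and the self-adjointness of the functional calculus (Lemma~\ref{lem:spectral}).

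\emph{Second equality in (i).} Set $w_i:=a\,Sv_i$; these lie in $L^2_0(\mu_h)$ by the remark after \eqref{eq:bridge}. By \eqref{eq:bridge} and Lemma~\ref{lem:spectral}(i),
\[
(\id-K^2)^{-1}H_{\mu_h}[\dot\mu_2]=(\id-K^2)^{-1}(\id+K)w_2=Rw_2,
\]
with $K=K_{\mu_h}$. So $g^\sigma_h(v_1,v_2)=\frac{\eps}{2}\int Rw_2\,\psi_{v_1}\,d\mu_h$. Since $d\mu_h=h\,Sh\,d\mu_0$ and $a=1/Sh$, we have $\psi_{v_1}\,d\mu_h=(v_1\,Sh+h\,Sv_1)\,d\mu_0$, which gives
\[
\int Rw_2\,\psi_{v_1}\,d\mu_h=\langle v_1,(Sh)\,Rw_2\rangle_0+\langle w_1,Rw_2\rangle_{L^2(\mu_h)}.
\]
The first term still has to be rewritten. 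We use $(\id-K)Rw_2=w_2$, i.e.\ $Rw_2=w_2+KRw_2$, together with the factorization $K=M_aSM_h$ of \eqref{eq:faktorisierung}. Then $(Sh)\,Rw_2=Sv_2+S(h\,Rw_2)$, so
\[
\langle v_1,(Sh)\,Rw_2\rangle_0=\langle v_1,Sv_2\rangle_0+\langle Sv_1,h\,Rw_2\rangle_0=\langle v_1,Sv_2\rangle_0+\langle w_1,Rw_2\rangle_{L^2(\mu_h)}.
\]
Here $\langle v_1,Sv_2\rangle_0=\langle Sv_1,Sv_2\rangle_{\Hc}$ by Lemma~\ref{lem:douglas}. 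The total is therefore $\frac{\eps}{2}\bigl(\langle Sv_1,Sv_2\rangle_{\Hc}+2\langle w_1,Rw_2\rangle_{L^2(\mu_h)}\bigr)$, which is $\tilde g_{\mu_h}(Sv_1,Sv_2)$ by \eqref{eq:gbeta-bilinear}, because $a_{\mu_h}Sv_i=w_i$.

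\emph{Third equality in (i).} Expand $\langle w_1,Rw_2\rangle_{L^2(\mu_h)}=\langle a\,Sv_1,h\,Sh\,R\,a\,Sv_2\rangle_0=\langle Sv_1,h\,R\,a\,Sv_2\rangle_0=\langle v_1,S M_hRM_aS v_2\rangle_0$, using self-adjointness of $S$ on $L^2(\mu_0)$. Together with the round term this gives $\frac{\eps}{2}\langle v_1,S T_h v_2\rangle_0$.

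\emph{Part (ii).} The sphere velocity $Sv$ lies in $B(\mu_h)^\perp$ by Proposition~\ref{prop:stratum}(iii), and $\|Sv\|^2_{\Hc}=\langle v,Sv\rangle_0$. Proposition~\ref{prop:sandwich} then gives \eqref{eq:stratum-sandwich}. For strictness, note that $\mu_h$ has full support, since its density is positive. If $v\ne0$, then $Sv\ne0$ by the injectivity in Lemma~\ref{lem:douglas}, and Proposition~\ref{prop:positivity} gives $Q_{\mu_h}(Sv)>0$. Positive definiteness and symmetry follow from the displayed formulas.

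\emph{Main obstacle.} The delicate step is the bookkeeping in the second equality, which converts the cross term $\langle v_1,(Sh)\,Rw_2\rangle_0$ into the round term plus a second copy of the correction. This relies on the identity $Rw=w+KRw$ holding pointwise in $C(X)$, which is the resolvent convention of \autoref{subsec:operators}. It also relies on $R$ commuting with $(\id+K)^{-1}$, which holds by the functional calculus on $L^2_0(\mu_h)$.
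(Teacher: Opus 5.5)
Your proposal is correct and follows the paper's proof essentially step for step. The shared steps are: the first equality from \eqref{eq:Theta} with $H_{\mu_h}[\psi\mu_h]=K_{\mu_h}\psi$; the reduction $(\id-K_{\mu_h}^{2})^{-1}H_{\mu_h}[\dot\mu_2]=R(a\,Sv_2)$ via \eqref{eq:bridge}; the rewriting $(Sh)\,R(a\,Sv_2)=Sv_2+S(h\,R(a\,Sv_2))$ from $R=\id+K_{\mu_h}R$ and \eqref{eq:faktorisierung}; and (ii) from Propositions~\ref{prop:sandwich} and~\ref{prop:positivity} applied to $Sv$. The only difference is the order: you assemble $\tilde g_{\mu_h}(Sv_1,Sv_2)$ first and then expand the correction into $\langle v_1,SM_hRM_aSv_2\rangle_0$, whereas the paper reaches $\frac{\eps}{2}\langle v_1,S\,T_hv_2\rangle_0$ first and then identifies it with $\tilde g_{\mu_h}$.
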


\begin{proof}
For (i), the first equality is \eqref{eq:Theta} with $H_{\mu_h}[\psi\mu_h]=K_{\mu_h}\psi$ for bounded $\psi$, \eqref{eq:Kmu}, since $\psi_{v_i}$ is continuous with $\mu_h$-mean zero. For the expression $\frac{\eps}{2}\langle v_1,S\,T_hv_2\rangle_0$, \eqref{eq:bridge} and the factorization $\id-K_{\mu_h}^{2}=(\id-K_{\mu_h})(\id+K_{\mu_h})$ on $L^{2}_{0}(\mu_h)$ yield $(\id-K_{\mu_h}^{2})^{-1}H_{\mu_h}[\dot\mu_2]=R(a\,Sv_2)$, so by \eqref{eq:metrik} and \eqref{eq:tangentialmass}
\[
g^{\sigma}_{h}(v_1,v_2)=\frac{\eps}{2}\int_X R(a\,Sv_2)\left(v_1\,Sh+h\,Sv_1\right)d\mu_0 .
\]
Since $R=\id+K_{\mu_h}R$ and $(Sh)\,K_{\mu_h}\varphi=S(h\varphi)$ by \eqref{eq:faktorisierung}, $(Sh)\,R(a\,Sv_2)=Sv_2+S\left(h\,R(a\,Sv_2)\right)$, and by the self-adjointness of $S$ on $L^{2}(\mu_0)$ the integral equals
\[
\begin{aligned}
&\langle v_1,Sv_2\rangle_0+\left\langle v_1,S\left(h\,R(a\,Sv_2)\right)\right\rangle_0+\left\langle Sv_1,h\,R(a\,Sv_2)\right\rangle_0\\
&\qquad=\langle v_1,Sv_2\rangle_0+2\left\langle v_1,\ S\,M_h\,R\,M_a\,S\,v_2\right\rangle_0
=\langle v_1,S\,T_hv_2\rangle_0 ,
\end{aligned}
\]
so $g^{\sigma}_{h}(v_1,v_2)=\frac{\eps}{2}\langle v_1,S\,T_hv_2\rangle_0$, the last expression in \eqref{eq:stratum-metric}. It equals the middle expression $\tilde g_{\mu_h}(Sv_1,Sv_2)$, since the round term of \eqref{eq:gbeta-bilinear} is $\langle Sv_1,Sv_2\rangle_{\Hc}=\langle v_1,Sv_2\rangle_0$ by the duality \eqref{eq:kme-duality}, and its correction is
\[
\langle a\,Sv_1,R(a\,Sv_2)\rangle_{L^{2}(\mu_h)}=\int_X(Sv_1)\,h\,R(a\,Sv_2)\,d\mu_0=\langle v_1,SM_hRM_aSv_2\rangle_0,
\]
where $a\,Sv_2\in L^{2}_{0}(\mu_h)$ because $\int_X a\,Sv_2\,d\mu_h=\langle Sv_2,h\rangle_0=\langle v_2,Sh\rangle_0=0$. For (ii), $Sv\in(Sh)^{\perp}$ since $\langle Sv,Sh\rangle_{\Hc}=\langle v,Sh\rangle_0=0$; apply Proposition~\ref{prop:sandwich} at $\mu_h$ to $Sv$ and the identity $g^{\sigma}_h(v,v)=\tilde g_{\mu_h}(Sv,Sv)$ of (i), with the identity for the round term. If $v\ne0$ then $Sv=\Hc[v\mu_0]\ne0$ by the injectivity of the kernel mean embedding (\autoref{subsec:rkhs}), and since $Sv$ is continuous and $\mu_h$ has full support, $Sv$ is nonzero on a set of positive $\mu_h$-measure. By Proposition~\ref{prop:positivity} the correction is then strictly positive, so the first inequality is strict, and $g^{\sigma}_{h}(v,v)>\frac{\eps}{2}\langle v,Sv\rangle_0\ge0$.
\end{proof}

\begin{proposition}[The stratum metric is weak]\label{prop:schwach}
Let $h\in U_\sigma$. The flat map $\flat_h:T_hU_\sigma\to(T_hU_\sigma)^{\ast}$, $v\mapsto g^{\sigma}_{h}(v,\cdot)$, is injective and compact; consequently it is not surjective, and $g^{\sigma}$ is a weak Riemannian metric on $U_\sigma$.
\end{proposition}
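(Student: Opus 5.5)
The plan is to read $\flat_h$ off the chart formula of Proposition~\ref{prop:stratumflat}(i), $g^{\sigma}_{h}(v,w)=\frac{\eps}{2}\langle w,S\,T_h\,v\rangle_0$. This shows that $\flat_h v$ is the functional $w\mapsto\frac{\eps}{2}\langle w,S T_h v\rangle_0$ on $T_hU_\sigma\subset H^\sigma(X)$. Injectivity is immediate from (ii): if $\flat_h v=0$, then $g^{\sigma}_h(v,v)=0$, and the strict lower bound $g^\sigma_h(v,v)>\frac{\eps}{2}\langle v,Sv\rangle_0\ge0$ for $v\ne0$ forces $v=0$.

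For compactness I would factor $\flat_h$ as a composition in which one factor is compact:
\[
T_hU_\sigma\xrightarrow{\ T_h\ }L^2(\mu_0)\xrightarrow{\ S\ }H^{2s}(X)\hookrightarrow L^2(\mu_0)\xrightarrow{\ J\ }(T_hU_\sigma)^{\ast},
\]
where $J\varphi:=\frac{\eps}{2}\langle\cdot,\varphi\rangle_0$ restricted to $T_hU_\sigma$. The steps are as follows.
\begin{enumerate}
\item $T_h=\id+2M_hRM_aS$ is bounded from $H^\sigma(X)$, with its subspace norm, into $L^2(\mu_0)$. Indeed, $S$ is bounded on $L^2(\mu_0)$, the multiplications $M_a$ and $M_h$ are bounded because $a$ and $h$ are continuous, and $R$ is bounded on $L^2_0(\mu_h)$ by Proposition~\ref{prop:operators}(iii), with $L^2(\mu_h)$ and $L^2(\mu_0)$ having equivalent norms. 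Finally $H^\sigma\hookrightarrow L^2$ is continuous.
\item $S:L^2(\mu_0)\to H^{2s}(X)$ is bounded by (M1) with $r=0$.
\item The inclusion $H^{2s}(X)\hookrightarrow L^2(\mu_0)$ is compact by Rellich on the closed manifold $X$, since $2s>0$. Even without (M1), $S$ is a compact integral operator on $L^2(\mu_0)$ with continuous kernel, so $ST_h$ is compact into $L^2(\mu_0)$.
\item $J$ is bounded, because $|\langle w,\varphi\rangle_0|\le\|w\|_0\|\varphi\|_0\le C\|w\|_{H^\sigma}\|\varphi\|_0$.
\end{enumerate}
Since compact operators form an ideal, $\flat_h=J\circ(S T_h)$ is compact.

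For non-surjectivity, $T_hU_\sigma$ is a closed hyperplane of the infinite-dimensional space $H^\sigma(X)$, so it is infinite-dimensional. A compact operator between infinite-dimensional Banach spaces cannot be surjective: by the open mapping theorem, surjectivity would make the image of the unit ball contain a ball of $(T_hU_\sigma)^\ast$, and that ball would then be relatively compact, which is impossible in infinite dimension. Hence $\flat_h$ is injective but not surjective.

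The metric $g^\sigma$ is real-analytic in $h$ by Corollary~\ref{cor:analytic}: the formula involves $a_{\mu_h}$, $R$ and $S$, all analytic in $h$, with the projection handled as in that corollary. Together with positive definiteness and non-surjectivity of $\flat_h$, this gives the definition of a weak Riemannian metric in the sense of Bauer, Bruveris and Michor.

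The main point needing care is Step 1. One has to check that $R$ applied to $aSv$ is legitimate, which it is because $aSv\in L^2_0(\mu_h)$ for $v\in T_hU_\sigma$, as shown after \eqref{eq:bridge}. One also has to check that all the norms in play are equivalent. Beyond that, the argument is routine.
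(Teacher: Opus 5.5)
Your proposal is correct and follows the paper's proof essentially step for step. Like the paper, you factor $\flat_h$ through the bounded map $S\,T_h:T_hU_\sigma\to H^{2s}(X)$, get injectivity from the strict inequality of Proposition~\ref{prop:stratumflat}(ii), and rule out surjectivity by the open mapping theorem together with Riesz's theorem. The only difference is that you place the compact factor at the Rellich embedding $H^{2s}(X)\hookrightarrow L^2(\mu_0)$, whereas the paper uses $H^{2s}(X)\hookrightarrow H^{-\sigma}(X)$, which is an inessential variant.
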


\begin{proof}
The operator $T_h$ of \eqref{eq:Th} maps $T_hU_\sigma$ boundedly into $L^{2}(\mu_0)$, since $S$ maps $H^{\sigma}(X)$ into $\CX$ by (M1), $a$ and $h$ are continuous and $R$ is bounded (Proposition~\ref{prop:operators}(iii)); so $S\,T_h$ maps $T_hU_\sigma$ boundedly into $H^{2s}(X)$, by (M1) with $r=0$. Hence, by \eqref{eq:stratum-metric}, $\flat_h$ is $\frac{\eps}{2}$ times the bounded map $S\,T_h:T_hU_\sigma\to H^{2s}(X)$ followed by the map $H^{2s}(X)\to(T_hU_\sigma)^{\ast}$, $\psi\mapsto\langle\cdot,\psi\rangle_0$, which factors through the compact embedding $H^{2s}(X)\hookrightarrow H^{-\sigma}(X)$ on the closed manifold $X$, so $\flat_h$ is compact. It is injective by the strict inequality in Proposition~\ref{prop:stratumflat}(ii), and it is not surjective: by the open mapping theorem a compact surjection onto the infinite-dimensional space $(T_hU_\sigma)^{\ast}$ would make a ball of that space relatively compact, which Riesz's theorem excludes.
\end{proof}

By Propositions~\ref{prop:stratumflat}(ii) and~\ref{prop:schwach}, $g^{\sigma}$ is weak in a quantitative sense. The norm it induces on $T_hU_\sigma$ is $\langle v,Sv\rangle_0^{1/2}=\|S^{1/2}v\|_0$, equivalent to the norm of $H^{-s}(X)$ (the paragraph after \eqref{eq:sobolevskala}), whereas the manifold topology of $U_\sigma$ is that of $H^{\sigma}(X)$. Correspondingly, a general element of $(T_hU_\sigma)^{\ast}$ has the form $u\mapsto\langle u,w\rangle_0=\int_X u\,w\,d\mu_0$ with $w\in H^{-\sigma}(X)$, the pairing extended by duality, whereas for $\flat_h(v)$, by \eqref{eq:stratum-metric}, $w=\frac{\eps}{2}\,S\,T_hv$, which lies in $H^{\sigma+2s}(X)$ (Proposition~\ref{prop:range}(ii)). Only covectors in the range of $\flat_h$ have gradients, and the Koszul formula defines the Levi-Civita connection only where its right-hand side lies in that range \citep[Section~2]{BauerBruverisMichor2014}, which is why Proposition~\ref{prop:range} determines the range exactly.

The metric is compatible with the scale of Theorem~\ref{thm:scale}: for $\sigma'>\sigma$, $g^{\sigma'}_h$ is the restriction of $g^{\sigma}_h$ to $T_hU_{\sigma'}\subset T_hU_\sigma$, both being the pullback \eqref{eq:gsigma} of $g_{\mu_h}$ with the same tangent measures \eqref{eq:tangentialmass}, and if Assumption~\ref{assump:stratum} holds for every $\sigma>d/2$, the restriction of these forms to $U_\infty$, where they are all the same, is a weak Riemannian metric on Ay's manifold $\mathcal P^\infty_+(X)$, smooth because it is real-analytic on every stratum. The constants in \eqref{eq:stratum-sandwich} depend neither on $h$ nor on $\sigma$: on every stratum the norm induced on the tangent spaces is that of $H^{-s}(X)$, while the manifold topology of $U_\sigma$ is that of $H^{\sigma}(X)$. In the sphere coordinate, the same tensor is uniformly equivalent to the round metric of $\Hc$ (Proposition~\ref{prop:sandwich}); the two topologies on the same set $\mathcal P^{\sigma}(X)$ differ because $S:H^{\sigma}(X)\to\Hc$ is compact (the paragraph after \eqref{eq:faktorisierung}), and uniform estimates (in the base point) come from the sphere coordinate through \eqref{eq:aequivalenz}.

The range of $\flat_h$, on which gradients and the Levi-Civita connection depend, is explicit. In the density chart of Theorem~\ref{thm:scale}(i), the tangent space of the stratum at $\mu=\mu_h$ consists of the measures $\varphi\mu$ with $\varphi\in H^{\sigma}(X)$ of $\mu$-mean zero, and on them the metric is the form $\langle\Theta_\mu\varphi,\chi\rangle_{L^2(\mu)}$ of \eqref{eq:Theta}, so the range is determined by the action of $\Kmu=M_aSM_h$, \eqref{eq:faktorisierung}, on the Sobolev scale; in the parametrization, \eqref{eq:stratum-metric} exhibits Arnold's inertia operator $\tfrac{\eps}{2}\,S\,T_h$, the fixed smoothing operator $S$ composed with $T_h$; it sends a tangent vector to the representative of its covector and maps $T_hU_\sigma$ onto $\{w\in H^{\sigma+2s}(X):\langle h,w\rangle_0=0\}$, since $T_h$ is a bijection of $T_hU_\sigma$ (Proposition~\ref{prop:range}(ii)).

\begin{lemma}[The transition operator on the Sobolev scale]\label{lem:sobolevK}
Let Assumptions~\ref{assump:standing} and~\ref{assump:stratum} hold, let $h\in U_\sigma$, $\mu=\mu_h$ and $a=1/(Sh)$, and write $H^{r}_{0}(\mu):=H^{r}(X)\cap L^{2}_{0}(\mu)$ for $r\ge0$.
(i) $\Kmu$ restricts to a bijection of $H^{\sigma}_{0}(\mu)$ onto $H^{\sigma+2s}_{0}(\mu)$, with inverse $\Kmu^{-1}\varphi=h^{-1}\,S^{-1}\left((Sh)\,\varphi\right)$.
(ii) For $r\in\{\sigma,\sigma+2s\}$ and $m\in\{1,2\}$, $\id-\Kmu^{m}$ restricts to a bijection of $H^{r}_{0}(\mu)$.
(iii) $\Theta_\mu$ restricts to a bijection of $H^{\sigma}_{0}(\mu)$ onto $H^{\sigma+2s}_{0}(\mu)$.
\end{lemma}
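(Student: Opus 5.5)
The approach is to work throughout with the factorization \eqref{eq:faktorisierung}, $\Kmu=M_aSM_h$, together with (M1) and (M2). Since $h\in H^\sigma(X)$ is positive, $a=1/(Sh)$ is positive with $Sh\in H^{\sigma+2s}(X)$; by (M2), $M_h$ and $M_{1/h}$ are isomorphisms of $H^r(X)$ for $r\le\sigma$, and $M_a$ and $M_{Sh}$ are isomorphisms of $H^r(X)$ for $r\le\sigma+2s$. Mean zero is preserved throughout, because $\Kmu$ is self-adjoint on $L^2(\mu)$ with $\Kmu1=1$.

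For (i), take $\varphi\in H^\sigma_0(\mu)$. Then $h\varphi\in H^\sigma(X)$, so $S(h\varphi)\in H^{\sigma+2s}(X)$ by (M1), and multiplication by $a$ keeps it there. Conversely, given $\chi\in H^{\sigma+2s}_0(\mu)$, the function $(Sh)\chi$ lies in $H^{\sigma+2s}(X)$. By (M1) with $r=\sigma$, it equals $S\eta$ for a unique $\eta\in H^\sigma(X)$. Setting $\varphi=\eta/h\in H^\sigma(X)$ gives $\Kmu\varphi=\chi$, and $\varphi$ has mean zero because $\int\varphi\,d\mu=\int\Kmu\varphi\,d\mu=0$. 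Injectivity follows from Lemma~\ref{lem:spectral}, or directly from the injectivity of $S$ in Lemma~\ref{lem:douglas}. This also yields the stated inverse formula.

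For (ii), $\Kmu$ maps $H^r_0(\mu)$ into $H^{r+2s}_0(\mu)\subset H^r_0(\mu)$ for $r=\sigma$, and for $r=\sigma+2s$ by using $H^{\sigma+2s}\subset H^\sigma$ and (i). So $\id-\Kmu^m$ maps $H^r_0(\mu)$ into itself. Injectivity holds on $L^2_0(\mu)$ by Proposition~\ref{prop:operators}(iii). For surjectivity, given $u\in H^r_0(\mu)$, let $g=(\id-\Kmu^m)^{-1}u\in L^2_0(\mu)$, which exists by the resolvent convention. Then $g=u+\Kmu^m g$, and a bootstrap shows $g\in H^r$. First, $\Kmu g=aS(hg)$ with $hg\in L^2$, so $\Kmu g\in H^{2s}$ by (M1) with $r=0$; the multiplication by $a$ is harmless because $a\in H^{\sigma+2s}$ and $2s>d/2$. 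Iterating, each application of $\Kmu$ gains $2s$ derivatives up to the ceiling allowed by $h\in H^\sigma$. As in the proof of Theorem~\ref{thm:scale}(i), one sets $r_j=\min(r,2js)$ and uses $g=u+\Kmu^m g$ at each step. This bootstrap is the main obstacle. Multiplication by $h$ only preserves regularity up to order $\sigma$, so $\Kmu$ maps $H^{t}$ into $H^{\min(t,\sigma)+2s}$. This cap is exactly enough to reach $r=\sigma+2s$ once $g\in H^\sigma$. The Moser estimates (M2) must also be applied at the intermediate orders, where the multiplier $a$ is more regular than the function it multiplies; for orders below $d/2$ one uses the standard product estimate $H^{t}\cdot H^{\sigma}\subset H^{t}$ for $0\le t\le\sigma$, $\sigma>d/2$.

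For (iii), use \eqref{eq:Theta}, $\Theta_\mu=\frac{\eps}{2}\Kmu(\id-\Kmu^2)^{-1}$. By (ii) with $r=\sigma$, $(\id-\Kmu^2)^{-1}$ is a bijection of $H^\sigma_0(\mu)$. By (i), $\Kmu$ is a bijection of $H^\sigma_0(\mu)$ onto $H^{\sigma+2s}_0(\mu)$. Their composition is therefore a bijection of $H^\sigma_0(\mu)$ onto $H^{\sigma+2s}_0(\mu)$. Note that $\Kmu$ and the resolvent commute, so the order of the factors does not matter.
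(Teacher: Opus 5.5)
Your proposal is correct and follows essentially the same route as the paper. You use the factorization $K_\mu=M_aSM_h$ with (M1) and (M2), the explicit preimage $h^{-1}S^{-1}((Sh)\varphi)$ for (i), the bootstrap of the $L^2_0(\mu)$-solution of $g=u+K_\mu^{m}g$ via the mapping property $H^{t}\to H^{\min(t,\sigma)+2s}$ for (ii), and composition for (iii). The only differences are cosmetic: the paper writes the bootstrap as $\psi=\sum_{j<n}K_\mu^{mj}\varphi+K_\mu^{mn}\psi$ rather than step by step, and you state explicitly the product estimate $H^{t}\cdot H^{\sigma}\subset H^{t}$ for $0\le t\le\sigma$, which the paper uses tacitly.
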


\begin{proof}
By (M2), $H^{\sigma}(X)$ and $H^{\sigma+2s}(X)$ are algebras that contain the inverses of their positive elements, so $h$ and $1/h$ lie in $H^{\sigma}(X)$, $Sh$ and $a$ in $H^{\sigma+2s}(X)$, and the product of a function in $H^{\sigma}(X)$ with one in $H^{\sigma+2s}(X)$ lies in $H^{\sigma}(X)$; by (M1), $S$ maps $H^{r}(X)$ onto $H^{r+2s}(X)$ for $0\le r\le\sigma$. Hence $\Kmu=M_aSM_h$ maps $H^{r}(X)$ into $H^{(\min(r,\sigma)+2s)}(X)$ for $0\le r\le\sigma+2s$, in particular $L^{2}(\mu_0)$ into $H^{2s}(X)$, $H^{\sigma}(X)$ into $H^{\sigma+2s}(X)$ and $H^{\sigma+2s}(X)$ into itself. Being self-adjoint on $L^{2}(\mu)$ with $\Kmu1=1$, it preserves $L^{2}_{0}(\mu)$.

(i) For $\varphi\in H^{\sigma+2s}(X)$, $(Sh)\varphi\in H^{\sigma+2s}(X)$, $S^{-1}((Sh)\varphi)\in H^{\sigma}(X)$ and
\[
\psi:=h^{-1}S^{-1}((Sh)\varphi)\in H^{\sigma}(X),
\]
with $\Kmu\psi=a\,S\left(S^{-1}((Sh)\varphi)\right)=a\,(Sh)\,\varphi=\varphi$. So $\Kmu$ maps $H^{\sigma}(X)$ onto $H^{\sigma+2s}(X)$, and it is injective on $L^{2}_{0}(\mu)$ by Lemma~\ref{lem:spectral}. If $\varphi$ has $\mu$-mean zero, so has $\psi$, since $\int_X\psi\,d\mu=\langle\psi,\Kmu1\rangle_{L^2(\mu)}=\int_X\Kmu\psi\,d\mu=0$.

(ii) $\Kmu^{m}$ maps $H^{r}_{0}(\mu)$ into itself for both values of $r$, so $\id-\Kmu^{m}$ is a bounded operator on $H^{r}_{0}(\mu)$, injective as on $L^{2}_{0}(\mu)$. For $\varphi\in H^{r}_{0}(\mu)$ let $\psi=(\id-\Kmu^{m})^{-1}\varphi\in L^{2}_{0}(\mu)$, with the inverse of Proposition~\ref{prop:operators}(iii). Then $\psi=\varphi+\Kmu^{m}\psi$, and by iteration $\psi=\sum_{j<n}\Kmu^{mj}\varphi+\Kmu^{mn}\psi$ for every $n$; each $\Kmu^{mj}\varphi$ lies in $H^{r}(X)$ and $\Kmu^{mn}\psi$ in $H^{\min(r,2mns)}(X)$, so $\psi\in H^{r}_{0}(\mu)$ if $2mns\ge r$.

(iii) follows from (i) and (ii) at the level $\sigma$, since $\Theta_\mu=\frac{\eps}{2}\Kmu(\id-\Kmu^{2})^{-1}$.
\end{proof}

% el rango del flat en el estrato, es lo que usa el Zusammenhang de la seccion 5
\begin{proposition}[Range of the flat map on the stratum]\label{prop:range}
Let Assumptions~\ref{assump:standing} and~\ref{assump:stratum} hold, let $h\in U_\sigma$, $\mu=\mu_h$ and $a=1/(Sh)$, with $H^{r}_{0}(\mu)$ as in Lemma~\ref{lem:sobolevK}.
(i) $T_\mu\mathcal P^{\sigma}(X)=\left\{\varphi\mu:\varphi\in H^{\sigma}_{0}(\mu)\right\}$, and a functional $w\mapsto\int_XF\,dw$ on it with $F\in\CX$ lies in the range of the flat map $\varphi\mu\mapsto g_\mu(\varphi\mu,\cdot)$ if and only if $F\in H^{\sigma+2s}(X)+\mathbb R$; it is then the image of $F^{\sharp}=\frac{2}{\eps}\left(\Kmu^{-1}-\Kmu\right)\Pi_0F\cdot\mu$, whose density with respect to $\mu$ lies in $H^{\sigma}_{0}(\mu)$.
(ii) $T_h$ is a bijection of $T_hU_\sigma$, and the range of $S\,T_h$ on $T_hU_\sigma$ is $\left\{w\in H^{\sigma+2s}(X):\langle h,w\rangle_0=0\right\}$.
\end{proposition}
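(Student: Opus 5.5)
For (i), the first step is to identify the tangent space. By Theorem~\ref{thm:scale}(i), the density chart is a real-analytic diffeomorphism of $U_\sigma$ onto an open subset of the affine hyperplane of mass-one densities in $H^\sigma(X)$. So the tangent vectors at $\mu=\rho\,\mathrm{vol}$ are the measures $\psi\,\mathrm{vol}$ with $\psi\in H^\sigma(X)$ of mass zero. Writing $\psi\,\mathrm{vol}=\varphi\mu$ with $\varphi=\psi/\rho$ gives $\varphi\in H^\sigma(X)$ by (M2), since $\rho>0$ lies in $H^\sigma(X)$. The converse holds for the same reason, and mass zero is exactly $\varphi\in L^2_0(\mu)$.

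For the characterization of the range, I would use the form $g_\mu(\varphi\mu,\chi\mu)=\langle\Theta_\mu\varphi,\chi\rangle_{L^2(\mu)}$ from \eqref{eq:Theta}. The functional $w\mapsto\int_X F\,dw$ equals $g_\mu(\varphi\mu,\cdot)$ on $T_\mu\mathcal P^\sigma(X)$ if and only if
\[
\int_X(\Theta_\mu\varphi-\Pi_0F)\,\chi\,d\mu=0\quad\text{for all }\chi\in H^\sigma_0(\mu).
\]
Since $H^\sigma(X)$ is dense in $L^2(\mu)$, so that $H^\sigma_0(\mu)$ is dense in $L^2_0(\mu)$, and both functions are continuous with $\mu$ of full support, this is the pointwise identity $\Theta_\mu\varphi=\Pi_0F$.
\begin{itemize}
\item If such $\varphi\in H^\sigma_0(\mu)$ exists, Lemma~\ref{lem:sobolevK}(iii) gives $\Pi_0F\in H^{\sigma+2s}_0(\mu)$, so $F\in H^{\sigma+2s}(X)+\mathbb R$.
\item Conversely, if $F\in H^{\sigma+2s}(X)+\mathbb R$, then $\Pi_0F\in H^{\sigma+2s}_0(\mu)$, and Lemma~\ref{lem:sobolevK}(iii) yields a unique preimage $\varphi=\Theta_\mu^{-1}\Pi_0F=\frac{2}{\eps}(\Kmu^{-1}-\Kmu)\Pi_0F$.
\end{itemize}
For the last equality, note that $\Theta_\mu=\frac{\eps}{2}\Kmu(\id-\Kmu^2)^{-1}$, and by Lemma~\ref{lem:sobolevK}(i),(ii) the factors $\Kmu$ and $\id-\Kmu^2$ invert on the relevant spaces and commute. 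This is $F^\sharp$ as in \eqref{eq:riesz-density}.

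For (ii), I would transport (i) through the chart rather than study $T_h$ directly. The map $v\mapsto\psi_v=v/h+Sv/Sh$ of \eqref{eq:tangentialmass} is the derivative of the density chart divided by the density. It is therefore a linear isomorphism of $T_hU_\sigma$ onto $H^\sigma_0(\mu)$, by the proof of Theorem~\ref{thm:scale}(i) and (M2).

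By \eqref{eq:stratum-metric}, $\frac{\eps}{2}\langle v_1,S\,T_hv_2\rangle_0=\langle\Theta_\mu\psi_{v_2},\psi_{v_1}\rangle_{L^2(\mu)}$. Unfolding $\psi_{v_1}$ and using the self-adjointness of $S$, this becomes
\[
\Bigl\langle v_1,\ (Sh)\,\Theta_\mu\psi_{v_2}+S\bigl(h\,\Theta_\mu\psi_{v_2}\bigr)\Bigr\rangle_0 .
\]
Since $v_1$ ranges over a hyperplane, this gives, modulo multiples of $Sh$,
\[
\frac{\eps}{2}\,S\,T_hv=(\id+S M_h M_a^{-1}\dots)
\]
More cleanly, I would use the identity $(Sh)\Kmu\varphi=S(h\varphi)$ from \eqref{eq:faktorisierung}. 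It gives
\[
\frac{\eps}{2}S\,T_hv=(Sh)\,(\id+\Kmu)\,\Theta_\mu\psi_v
\]
up to a multiple of $Sh$; the exact bridge is \eqref{eq:bridge} combined with the proof of Proposition~\ref{prop:stratumflat}. By Lemma~\ref{lem:sobolevK}, $\Theta_\mu\psi_v$ runs through all of $H^{\sigma+2s}_0(\mu)$, and $\id+\Kmu$ is a bijection there: it is injective, and Fredholm by compactness of $\Kmu$ on $H^{\sigma+2s}$, or invertible by the Neumann-type argument of Lemma~\ref{lem:sobolevK}(ii). Multiplication by $Sh\in H^{\sigma+2s}$ then maps $H^{\sigma+2s}_0(\mu)$ bijectively onto $\{w\in H^{\sigma+2s}:\langle h,w\rangle_0=0\}$, because $\int (w/Sh)\,d\mu=\langle h,w\rangle_0$.

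This gives the range of $S\,T_h$. Injectivity and surjectivity of $T_h$ onto $T_hU_\sigma$ then follow from (M1): $T_h=\frac{2}{\eps}S^{-1}(\text{that bijection})$, and $S^{-1}$ maps the range space onto $\{v\in H^\sigma:\langle v,Sh\rangle_0=0\}$.

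The main obstacle is making this identity exact rather than merely modulo multiples of $Sh$, so that the range is stated precisely. I would derive it directly from $T_h=\id+2M_hRM_aS$ by multiplying by $S$ and using $SM_h=M_{Sh}\Kmu M_a^{-1}$. One also has to check that the mean-zero condition produces exactly the constraint $\langle h,w\rangle_0=0$.
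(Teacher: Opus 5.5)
Your part (i) follows the paper's argument. For (ii) you take a different route, and it works. The step you flag as the main obstacle is a two-line computation, and the identity holds exactly, not only modulo multiples of $Sh$.

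From \eqref{eq:faktorisierung} the correct relation is $SM_h=M_{Sh}\Kmu$; there is no factor $M_a^{-1}$ on the right. Put $y:=a\,Sv\in L^2_0(\mu)$ and $R=(\id-\Kmu)^{-1}$. Then
\[
S\,T_hv=Sv+2\,S\bigl(h\,Ry\bigr)=(Sh)\bigl(y+2\Kmu Ry\bigr)=(Sh)\,(\id+\Kmu)\,Ry ,
\]
since $\id+2\Kmu R=(\id-\Kmu+2\Kmu)R$ on $L^2_0(\mu)$. By \eqref{eq:bridge}, $\Kmu\psi_v=(\id+\Kmu)y$, so $\Theta_\mu\psi_v=\frac{\eps}{2}(\id-\Kmu^{2})^{-1}(\id+\Kmu)y=\frac{\eps}{2}Ry$. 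Hence $\frac{\eps}{2}S\,T_hv=(Sh)(\id+\Kmu)\Theta_\mu\psi_v$ exactly.

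If you prefer to keep your weak-form derivation, the undetermined multiple $c\,Sh$ vanishes. Both sides pair to zero with $h$ in $L^2(\mu_0)$: for the left side because $\int_X R(aSv)\,d\mu=0$, for the right side by the mean-zero property. Since $\langle h,Sh\rangle_0=1$, this forces $c=0$. The unfinished display containing $M_a^{-1}\dots$ should be dropped.

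For the bijectivity of $\id+\Kmu$ on $H^{\sigma+2s}_0(\mu)$, the quickest justification is $\id+\Kmu=(\id-\Kmu^{2})(\id-\Kmu)^{-1}$ together with Lemma~\ref{lem:sobolevK}(ii). Your Fredholm alternative would first need compactness of $\Kmu$ on $H^{\sigma+2s}(X)$, which follows from the compact embedding $H^{\sigma+2s}\hookrightarrow H^{\sigma+s}$.

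\textbf{Comparison with the paper.} The paper works on $T_h$ directly. It shows that $2M_hRM_aS$ is compact on $T_hU_\sigma$, using Lemma~\ref{lem:sobolevK}(ii), the compact embedding $H^{\sigma+2s}\hookrightarrow H^{\sigma+s}$ and multiplication by $h$. So $T_h$ is Fredholm of index zero, and it is injective by the positive definiteness of $g^{\sigma}_h$ (Proposition~\ref{prop:stratumflat}(ii)), hence bijective. The range of $S\,T_h$ is then just $S(T_hU_\sigma)$, read off from (M1).

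Your factorization $\frac{\eps}{2}S\,T_h=M_{Sh}\,(\id+\Kmu)\,\Theta_\mu\,(v\mapsto\psi_v)$ writes the inertia operator as four isomorphisms between the right spaces. This gives the range and the bijectivity of $T_h$ in one stroke, with an explicit inverse, and needs neither Fredholm theory nor Proposition~\ref{prop:stratumflat}(ii). It also shows that (ii) is just (i) read in the chart $h$. The paper's argument is shorter once compactness is available and never needs the exact operator identity.
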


\begin{proof}
(i) By Theorem~\ref{thm:scale}(i), $\mu=\rho\,\mathrm{vol}$ with $\rho\in H^{\sigma}(X)$ positive, and the tangent space consists of the measures $\psi\,\mathrm{vol}$ with $\psi\in H^{\sigma}(X)$ and $\int_X\psi\,d\mathrm{vol}=0$; such a measure is $\varphi\mu$ with $\varphi=\psi/\rho$, which lies in $H^{\sigma}(X)$ by (M2) and has $\mu$-mean zero, and conversely. By \eqref{eq:Theta}, $g_\mu(\varphi\mu,\chi\mu)=\int_X\Theta_\mu\varphi\,d(\chi\mu)$ for $\varphi,\chi\in H^{\sigma}_{0}(\mu)$, so the range of the flat map consists of the functionals $w\mapsto\int_XF\,dw$ with $F\in\Theta_\mu(H^{\sigma}_{0}(\mu))=H^{\sigma+2s}_{0}(\mu)$, by Lemma~\ref{lem:sobolevK}(iii). Two continuous functions define the same functional on $T_\mu\mathcal P^{\sigma}(X)$ if and only if they differ by a constant: tested against $\chi\mu$ with $\chi\in H^{\sigma}_{0}(\mu)$, a space dense in $L^{2}_{0}(\mu)$, their difference is $\mu$-a.e. constant, hence constant, since $\mu$ has full support. So the functional of $F$ lies in the range if and only if $\Pi_0F\in H^{\sigma+2s}_{0}(\mu)$, that is $F\in H^{\sigma+2s}(X)+\mathbb R$, and its preimage is $\Theta_\mu^{-1}\Pi_0F\cdot\mu=F^{\sharp}$, with density in $H^{\sigma}_{0}(\mu)$ by Lemma~\ref{lem:sobolevK}(iii).

(ii) $T_h=\id+2M_hRM_aS$ maps $T_hU_\sigma$ into itself: for $v\in T_hU_\sigma$, $aSv\in L^{2}_{0}(\mu)$ by Proposition~\ref{prop:stratum}(iii), $R(aSv)$ has $\mu$-mean zero, and $\langle T_hv,Sh\rangle_0=\langle v,Sh\rangle_0+2\int_XR(aSv)\,d\mu=0$. The second summand of $T_h$ is compact on $T_hU_\sigma$: $aSv\in H^{\sigma+2s}_{0}(\mu)$ by (M1) and (M2), so $v\mapsto R(aSv)$ is bounded into $H^{\sigma+2s}(X)$ by Lemma~\ref{lem:sobolevK}(ii) with $m=1$ and the open mapping theorem, the embedding of $H^{\sigma+2s}(X)$ into $H^{\sigma+s}(X)$ is compact, and multiplication by $h$ is bounded from $H^{\sigma+s}(X)$ into $H^{\sigma}(X)$. So $T_h$ is Fredholm of index zero on $T_hU_\sigma$ and injective, since $T_hv=0$ implies $g^{\sigma}_h(v,v)=0$ and then $v=0$ by Proposition~\ref{prop:stratumflat}(ii); hence it is bijective, and $S\,T_h(T_hU_\sigma)=S(T_hU_\sigma)=\{Sv:\langle v,Sh\rangle_0=0\}=\{w\in H^{\sigma+2s}(X):\langle S^{-1}w,Sh\rangle_0=0\}$ by (M1), and $\langle S^{-1}w,Sh\rangle_0=\langle w,h\rangle_0$ by the self-adjointness of $S$ on $L^{2}(\mu_0)$.
\end{proof}

\begin{remark}[Fisher-Rao against Sinkhorn on the stratum]\label{rem:frsinkhorn}
Let Assumptions~\ref{assump:standing} and~\ref{assump:stratum} hold, $h\in U_\sigma$ and $v\in T_hU_\sigma$, write $K=K_{\mu_h}$, and let $\psi_v$ be the density \eqref{eq:tangentialmass} of the tangent measure with respect to $\mu_h$. Both metrics are evaluated on it,
\[
g^{F}_h(v,v)=\|\psi_v\|_{L^2(\mu_h)}^{2},
\qquad
g^{\sigma}_h(v,v)=\left\langle\Theta_{\mu_h}\psi_v,\psi_v\right\rangle_{L^2(\mu_h)},
\]
the first by Remark~\ref{rem:ajls}, the second by Proposition~\ref{prop:stratumflat}(i). So the Fisher-Rao and the Sinkhorn metric are the forms $\langle\psi_v,\phi(K)\psi_v\rangle_{L^2(\mu_h)}$ of Lemma~\ref{lem:spectral} with the symbols $\phi_F(\lambda)=1$ and $\phi_S(\lambda)=\frac{\eps}{2}\,\frac{\lambda}{1-\lambda^{2}}$, the latter by \eqref{eq:Theta}, and since $\phi_S/\phi_F$ increases on $(0,q]$, Lemma~\ref{lem:spectral}(ii) yields
\begin{equation}\label{eq:fr-sinkhorn}
g^{\sigma}_h(v,v)\le\frac{\eps}{2}\,\frac{q}{1-q^{2}}\;g^{F}_h(v,v)
\qquad\text{for all }h\in U_\sigma\text{ and }v\in T_hU_\sigma,
\end{equation}
with a constant that depends only on $\eps$ and $\|c\|_\infty$. The inequality does not hold in the other direction: the functions $\psi_v=D\Phi_\sigma(h)v/\Phi_\sigma(h)$, $v\in T_hU_\sigma$, are exactly the functions in $H^{\sigma}(X)$ of $\mu_h$-mean zero, since $D\Phi_\sigma(h)$ maps $T_hU_\sigma$ onto the densities of mass zero (Theorem~\ref{thm:scale} and its proof), so they are dense in $L^2_0(\mu_h)$, and $\Theta_{\mu_h}$ is compact, with eigenvalues $\phi_S(\lambda_j)$ accumulating at zero. For a function on the stratum whose gradients with respect to both metrics exist at $h$, $\operatorname{grad}^{S}=\Theta_{\mu_h}^{-1}\operatorname{grad}^{F}$ as densities with respect to $\mu_h$: the component of the Fisher-Rao gradient along the eigenfunction of $K$ for the eigenvalue $\lambda$ is multiplied by $1/\phi_S(\lambda)=\frac{2}{\eps}(1-\lambda^{2})/\lambda$, an amplification of the rough directions that grows without bound as $\lambda\to0$.
\end{remark}

%% file: sec5.tex
\section{Third variation, self-duality and the Levi-Civita connection}\label{sec:tensors}

Eguchi's construction \citep{Eguchi_1983,Eguchi_1992} continues to third order: the third mixed derivatives of a divergence across its diagonal define the first-kind forms of a pair of torsion-free affine connections, dual to each other with respect to the metric, whose difference is a totally symmetric cubic tensor; for the Kullback-Leibler divergence they are the forms of the mixture and the exponential connection, and the cubic tensor is a multiple of the Amari-Chentsov tensor. Because $\Se$ is symmetric, its two forms coincide, with each other and with the first-kind Levi-Civita form of $g$, and the cubic tensor vanishes (Theorem~\ref{thm:collapse}). Since the metric is weak, the common form need not be represented by a tangent vector; we show that the form at a given base measure is represented exactly when it lies in the domain of the Riesz map (Remark~\ref{rem:sharp}), and on every smooth stratum we find such a representer, the Levi-Civita connection (Theorem~\ref{thm:levicivita}).

In Eguchi's convention the connection forms are the third mixed derivatives $-\partial_t\partial_s\partial_rD$ of a divergence $D$ on the diagonal, with two derivatives in one argument and one in the other; we scale them by the same factor (one half) as the metric \eqref{eq:gdef}, which keeps the duality identity \eqref{eq:dualitaet}, homogeneous of degree one in the triple, and Theorem~\ref{thm:collapse} does not depend on the normalization, since a constant multiple of $g$ has the same Levi-Civita connection.

In this section $\mu\in\PX$ is fixed, and the directions are those of Theorem~\ref{thm:hessian}, the vector space of the balanced measures with bounded density with respect to $\mu$,
\[
V_\mu=\left\{\xi\in\MzX:\ \xi\ll\mu,\ \frac{d\xi}{d\mu}\in L^{\infty}(\mu)\right\}.
\]
For $u,v,w\in V_\mu$ and $r,s,t$ small real parameters, the measures $\mu+tu+sv$ and $\mu+rw$ lie in $\PX$, since their densities with respect to $\mu$ stay positive and their mass is one; $V_{\mu+rw}=V_\mu$ for small $r$, since $d(\mu+rw)/d\mu$ is bounded above and below by positive constants. The connection forms $\Gamma_\mu,\Gamma^\ast_\mu:V_\mu\times V_\mu\times V_\mu\to\mathbb R$ are
\begin{equation}\label{eq:zusammenhangsformen}
\begin{aligned}
\Gamma_\mu(u,v;w)&=-\frac12\,\partial_t\partial_s\partial_r\,\OTe\left(\mu+tu+sv,\ \mu+rw\right)\Big|_{t=s=r=0},\\
\Gamma^\ast_\mu(u,v;w)&=-\frac12\,\partial_t\partial_s\partial_r\,\OTe\left(\mu+rw,\ \mu+tu+sv\right)\Big|_{t=s=r=0}.
\end{aligned}
\end{equation}

The function differentiated in \eqref{eq:zusammenhangsformen} is $\OTe$: its mixed derivatives are those of $\Se$, since the diagonal terms in \eqref{eq:selfkill} depend on one argument each. The derivatives are taken along straight lines in $\MX$, so $\Gamma_\mu$ and $\Gamma^\ast_\mu$ are connection forms relative to the affine structure of $\MX$, in which the mixture connection is flat, and they depend on that choice; their difference
\begin{equation}\label{eq:kubischertensor}
T_\mu(u,v,w)=\Gamma^\ast_\mu(u,v;w)-\Gamma_\mu(u,v;w)
\end{equation}
is a tensor, the cubic tensor of the divergence (for the Kullback-Leibler divergence a constant multiple of the Amari-Chentsov tensor).\footnote{Following \citet{AyJostLeSchwachhoefer2017}, we reserve the name Amari-Chentsov tensor for the canonical cubic tensor of a statistical model, $\int_X\psi_1\psi_2\psi_3\,d\mu$ for logarithmic derivatives $\psi_1,\psi_2,\psi_3$ of the density. On the smooth stratum this is the tensor of Remark~\ref{rem:ajls}, and it does not vanish identically. It is a different object from the cubic tensor $T$ of $\Se$, which vanishes by Theorem~\ref{thm:collapse}(i).} For vector fields $u,v,w$ that are constant in this affine structure, $\Gamma_\mu(u,v;w)$ plays the role of $g_\mu(\nabla_uv,w)$, the connection in first-kind form.

\begin{lemma}[{Smoothness along affine families; \citep[Theorem~4.3]{CarlierLaborde2020}}]\label{lem:smooth}
Let $\mu\in\PX$ and $u_1,\dots,u_p,\,w_1,\dots,w_q\in V_\mu$. For $t\in\mathbb R^{p}$ and $r\in\mathbb R^{q}$ set $\mu_t=\mu+\sum_i t_iu_i$ and $\nu_r=\mu+\sum_j r_jw_j$. Then there is a neighbourhood $U$ of the origin in $\mathbb R^{p+q}$ with $\mu_t,\nu_r\in\PX$ for $(t,r)\in U$. On $U$, both the potential class $(t,r)\mapsto\left[f_{\mu_t,\nu_r}\right]\in\CX/\mathbb R$ and the value $(t,r)\mapsto\OTe\left(\mu_t,\nu_r\right)$ are $C^{\infty}$.
\end{lemma}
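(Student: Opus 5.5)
The plan is to realize the Schr\"odinger system as the zero set of a smooth map between Banach spaces and apply the implicit function theorem in $C(X)\times C(X)$. The first claim is elementary. Write $u_i=\phi_i\mu$ and $w_j=\chi_j\mu$ with $\phi_i,\chi_j\in L^\infty(\mu)$. Then $d\mu_t/d\mu=1+\sum_it_i\phi_i$ and $d\nu_r/d\mu=1+\sum_jr_j\chi_j$ are bounded below by $1/2$ for $|t|,|r|$ small. Their mass is one because the directions are balanced. So $\mu_t,\nu_r\in\PX$, and all of them are equivalent to $\mu$ with densities bounded above and below.

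For the smoothness I would normalize the potentials, for instance by $g(x_0)=0$ for a fixed point $x_0$ (any continuous linear normalization would do). I then consider
\[
\Psi(t,r,f,g)=\Bigl(f-T_\eps(g,\nu_r),\ g-T_\eps(f,\mu_t)\Bigr)
\]
on $\mathbb R^{p+q}\times C(X)\times C_{x_0}(X)$, where $C_{x_0}(X)=\{g\in C(X):g(x_0)=0\}$. A fixed normalization of $f$ would make this square; more cleanly, I would quotient by the shift and use $\CX/\mathbb R$ as the paper does.

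The map $T_\eps(f,\mu_t)(y)=-\eps\log\int e^{(f(x)-c(x,y))/\eps}(1+\sum t_i\phi_i(x))\,d\mu(x)$ is $C^\infty$ jointly in $(t,f)$. The reason is that the integrand is an exponential of a continuous function composed with a linear, hence smooth, map into $C(X\times X)$. Integration against a bounded density is linear and continuous, the logarithm is applied to a function bounded below, and the dependence on $t$ is affine. Therefore $\Psi$ is $C^\infty$.

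The key step is invertibility of the partial derivative in $(f,g)$ at a solution. This derivative is
\[
(\dot f,\dot g)\mapsto\bigl(\dot f-P_{\nu}^\ast\dot g,\ \dot g-P_{\mu}\dot f\bigr),
\]
where $P_\mu\dot f(y)=\int\dot f(x)\,\pi(dx|y)$ and $P^\ast_\nu$ are the Markov conditional-expectation operators of the optimal plan \eqref{eq:eot-plan}. Both operators are compact on $C(X)$, since their kernels are continuous (this is Arzel\`a–Ascoli, as for $\Kmu$ in \autoref{subsec:operators}). So the derivative is the identity plus a compact operator, hence Fredholm of index zero.

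It remains to show injectivity modulo shifts. If $\dot f=P^\ast\dot g$ and $\dot g=P\dot f$, then $\dot f=P^\ast P\dot f$. The Doeblin minorization of Proposition~\ref{prop:operators} applies here too: the kernel density is bounded below by a positive multiple of $e^{-\|c\|_\infty/\eps}$, because the potentials are bounded. That minorization makes $P^\ast P$ a strict contraction of the oscillation, so $\dot f$ is constant, and then $\dot g=-\dot f$ after normalization; this is exactly the shift direction. I expect this Fredholm-plus-contraction argument to be the main obstacle, chiefly bookkeeping of the normalization on $\CX/\mathbb R$.

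The analytic (or $C^\infty$) implicit function theorem then gives a $C^\infty$ map $(t,r)\mapsto[f_{\mu_t,\nu_r}]$ into $\CX/\mathbb R$, together with $g$. The value is $\OTe(\mu_t,\nu_r)=\int f\,d\mu_t+\int g\,d\nu_r$ by \eqref{eq:eot-value}. This expression is shift invariant and is a composition of smooth maps, since the pairing of $C(X)$ with $\mu_t$ is bilinear continuous in $(f,t)$. It is therefore $C^\infty$ on a neighbourhood $U$, which I shrink so that the first part also holds. This is precisely the route of \citet[Theorem~4.3]{CarlierLaborde2020}, and I would cite it for the details.
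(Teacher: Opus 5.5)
Your first claim and the smoothness of $\Psi$ are fine, but the implicit function step fails as you set it up, and neither of your proposed normalizations repairs it. You show that $D_{(f,g)}\Psi$ is $\id$ plus a compact operator on $\CX\times\CX$, with kernel exactly the shift line $\{(\lambda,-\lambda):\lambda\in\mathbb R\}$. By Riesz--Schauder its range is then a closed hyperplane. In fact it is $\{(F,G):\int_XF\,d\mu_t=\int_XG\,d\nu_r\}$, since $\int_XP_\mu\dot f\,d\nu_r=\int_X\dot f\,d\mu_t$ and $\int_XP^\ast_\nu\dot g\,d\mu_t=\int_X\dot g\,d\nu_r$.

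Restricting the unknowns to $\CX\times C_{x_0}(X)$, or passing to the quotient by the shift, makes the derivative injective. It does not enlarge the range, which stays a hyperplane, so the derivative is never an isomorphism onto $\CX\times\CX$. Moreover $\Psi$ itself does not take values in that hyperplane, so the implicit function theorem does not apply. The normalization has to enter the equation, not only the unknowns. For instance, on all of $\CX\times\CX$ take
\[
\Psi(t,r,f,g)=\Bigl(f-T_\eps(g,\nu_r)+\Bigl(\int_X f\,d\mu\Bigr)\mathbf 1,\ g-T_\eps(f,\mu_t)\Bigr).
\]
Its partial derivative is still $\id$ plus compact, and it is now injective. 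Eliminating $\dot g=-P_\mu\dot f$ and integrating the first component against $\mu_t$ gives $\int_X\dot f\,d\mu=0$. Your Doeblin argument then makes $\dot f$ constant, hence zero.

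At a zero of this $\Psi$, $f-T_\eps(g,\nu_r)=c_0$ is a constant. Integrating the two exponentiated equations against $\mu_t$ and $\nu_r$ gives $e^{c_0/\eps}=\iint e^{(f(x)+g(y)-c(x,y))/\eps}\,d\mu_t(x)\,d\nu_r(y)=1$. So the zeros are exactly the Schr\"odinger pairs with $\int_Xf\,d\mu=0$, and by uniqueness up to the shift (\autoref{subsec:duality}) the implicit branch is the potential. This is the device the paper uses in the proof of Lemma~\ref{lem:potanalytic}(iii): there $(\int_X\log a\,d\mu_0)\mathbf 1$ is added to a map whose other terms integrate to zero against $\mu$.

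There are also two smaller slips. First, the sign: $D_fT_\eps(f,\mu_t)=-P_\mu$ (compare \eqref{eq:DTeps}), so the derivative is $(\dot f,\dot g)\mapsto(\dot f+P^\ast_\nu\dot g,\ \dot g+P_\mu\dot f)$. With your minus signs the kernel would be $\{(\lambda,\lambda)\}$, which contradicts your conclusion $\dot g=-\dot f$. Second, the kernel of $P_\mu$ is continuous with respect to $\mu_t$ rather than $\mu$; that is all Arzel\`a--Ascoli needs.

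With these repairs your argument is a self-contained proof working directly in $\CX$. The paper argues differently. It takes from \citet[Theorem~4.3]{CarlierLaborde2020} the $C^\infty$ dependence of $\bigl(f_{t,r}/\eps+\log\rho_t,\ g_{t,r}/\eps+\log\varpi_r\bigr)\in L^\infty(\mu)\times L^\infty(\mu)$ on the densities $\rho_t=d\mu_t/d\mu$ and $\varpi_r=d\nu_r/d\mu$. It then passes to $\CX$ by one application of $T_\eps$, which maps $L^\infty(\mu)$ smoothly into $\CX$. The citation therefore carries exactly the invertibility and normalization issue that your version must settle by hand, while your route avoids the upgrade from $L^\infty(\mu)$ to $\CX$.
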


In the notation of \citet[Theorem~4.3]{CarlierLaborde2020}, with $\mu$ as reference measure on both factors, the solution of the Schr\"odinger system for the marginal densities $\rho_t=d\mu_t/d\mu$ and $\varpi_r=d\nu_r/d\mu$ is $\left(f_{t,r}/\eps+\log\rho_t,\ g_{t,r}/\eps+\log\varpi_r\right)$, a $C^{\infty}$ function of $(\rho_t,\varpi_r)$ with values in $L^{\infty}(\mu)\times L^{\infty}(\mu)$. The densities are affine in $(t,r)$ and bounded below near the origin, and $T_\eps$ maps $L^{\infty}(\mu)$ smoothly into $\CX$, so the potentials are $C^{\infty}$ with values in $\CX$, and so is the value, by \eqref{eq:eot-value}.

By Lemma~\ref{lem:smooth} the derivatives in \eqref{eq:zusammenhangsformen} exist and commute; the forms $\Gamma_\mu$ and $\Gamma^\ast_\mu$ are linear in each argument, by the chain rule applied to families with several directions, and symmetric in the pair $(u,v)$, since exchanging $(t,u)$ and $(s,v)$ leaves $\mu+tu+sv$ unchanged. This is the torsion-free property, obtained directly from the definition; Proposition~\ref{prop:gammapot} expresses the same forms through derivatives of the potentials, whereby $u$ only pairs with the potential and $v$ varies its first marginal.

\begin{proposition}[Potential representation]\label{prop:gammapot}
For $u,v,w\in V_\mu$, let
\[
D^{2}f_\mu(v;w):=\partial_s\partial_r\,f_{\mu+sv,\,\mu+rw}\Big|_{s=r=0}\ \in\ \CX/\mathbb R
\]
be the mixed second derivative of the first potential, with one derivative in each marginal; it exists by Lemma~\ref{lem:smooth}. Then
\begin{equation}\label{eq:gammapot}
\Gamma_\mu(u,v;w)=-\frac12\left\langle u,\ D^{2}f_\mu(v;w)\right\rangle.
\end{equation}
The pairing does not depend on the representative, since $u$ is balanced.
\end{proposition}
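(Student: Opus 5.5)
The plan is to differentiate the envelope identity \eqref{eq:envelope} twice more. For a multi-parameter family, Lemma~\ref{lem:smooth} gives a $C^\infty$ value and $C^\infty$ potential classes. I will use the envelope principle in the form that, for fixed $(s,r)$, the map $t\mapsto\OTe(\mu+tu+sv,\mu+rw)$ has derivative $\langle u,f_{\mu+tu+sv,\mu+rw}\rangle$. This is \eqref{eq:envelope} applied with base measure $\mu+sv$, the first marginal moving along $u$ and the second marginal fixed. That special case is covered by \citep[Proposition~2]{feydy_sejourné_vialard_amari_trouvé_peyré_interpolating_2019}, or by \citep[Lemma~3.12]{lavenant_luckhardt_mordant_et_al_riemannian_2025} with one marginal frozen. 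The pairing is well defined on classes in $\CX/\mathbb R$ because $u$ is balanced.

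With this in hand, I evaluate at $t=0$:
\[
\partial_t\OTe(\mu+tu+sv,\mu+rw)\big|_{t=0}=\left\langle u,f_{\mu+sv,\,\mu+rw}\right\rangle .
\]
I then take $\partial_s\partial_r$ at $s=r=0$. The derivatives commute and can be taken in any order, since the value is $C^\infty$ jointly in $(t,s,r)$ by Lemma~\ref{lem:smooth} with $p=2$, $q=1$. For fixed $u\in V_\mu$, pairing with $u$ is a continuous linear functional on $\CX/\mathbb R$: by the bounded density it is bounded by $\|du/d\mu\|_\infty$ times the quotient sup-norm (oscillation). Hence it commutes with differentiation of the $C^\infty$ map $(s,r)\mapsto[f_{\mu+sv,\mu+rw}]\in\CX/\mathbb R$, again by Lemma~\ref{lem:smooth} with $p=q=1$. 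This gives $\partial_t\partial_s\partial_r\OTe=\langle u,D^2f_\mu(v;w)\rangle$, and multiplying by $-\tfrac12$ yields \eqref{eq:gammapot}.

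The main obstacle is justifying the first-derivative formula off the diagonal and uniformly in $(s,r)$. The derivative in $t$ must be the pairing with the potential of the current pair $(\mu+tu+sv,\mu+rw)$, and it must hold on a whole neighbourhood, not only at $t=0$. I would argue this directly from the dual \eqref{eq:eot-dual}, which is a supremum of functions affine in the first marginal. Inserting the optimal potentials of the pair at parameter $t$ as a competitor at $t'$ gives the lower bound
\[
\OTe(\mu_{t'},\nu)\ge\OTe(\mu_t,\nu)+(t'-t)\langle u,f_{t}\rangle-\eps\,(t'-t)\Big(\int e^{(f_t+g_t-c)/\eps}\,d(u\otimes\nu)\Big),
\]
and the last integral vanishes by the second Schr\"odinger equation \eqref{eq:schroedingersystem} paired with the balanced $u$. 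Together with the differentiability of the value from Lemma~\ref{lem:smooth}, this supporting-line inequality forces the derivative to equal $\langle u,f_t\rangle$.
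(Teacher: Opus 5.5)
Your proposal is correct and follows the paper's proof: apply the envelope identity at the base point $\mu+sv$ with second marginal $\mu+rw$, then commute the bounded pairing with the balanced $u$ with the $(s,r)$-derivatives of the $C^{\infty}$ potential class from Lemma~\ref{lem:smooth}. Your supporting-line argument from the dual \eqref{eq:eot-dual} is a valid self-contained justification of the off-diagonal envelope step, which the paper simply takes from \eqref{eq:envelope}; note that the integral against $u\otimes\nu$ vanishes by the equation $f=T_\eps(g,\nu)$, which is the first equation of \eqref{eq:schroedingersystem}, not the second.
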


\begin{proof}
Let $F(t,s,r)=\OTe\left(\mu+tu+sv,\,\mu+rw\right)$, which is $C^{\infty}$ near the origin by Lemma~\ref{lem:smooth}. Since $V_{\mu+sv}=V_\mu$, the envelope identity \eqref{eq:envelope} at the base point $\mu+sv$ yields $\partial_tF(0,s,r)=\left\langle u,\ f_{\mu+sv,\,\mu+rw}\right\rangle$, and the pairing with the balanced measure $u$, a bounded linear functional on $\CX/\mathbb R$, commutes with the derivatives in $(s,r)$ of the $C^{\infty}$ potential class. Hence $\partial_t\partial_s\partial_rF(0)=\left\langle u,D^{2}f_\mu(v;w)\right\rangle$, and \eqref{eq:zusammenhangsformen} yields \eqref{eq:gammapot}.
\end{proof}

Differentiating the Schr\"odinger system twice expresses $D^{2}f_\mu$ through the second derivatives of $T_\eps$, the operators $\Kmu$ and $\Hmu$ and the resolvent $\left(\id-\Kmu^{2}\right)^{-1}$ (Proposition~\ref{prop:Dsqf} in Appendix~\ref{app:thirdvar}); Theorem~\ref{thm:collapse} does not need this expression.

\begin{theorem}[Self-duality of the Sinkhorn divergence]\label{thm:collapse}
Let $\mu\in\PX$ and $u,v,w\in V_\mu$. Then:

(i) $\Gamma_\mu=\Gamma^\ast_\mu$; in particular the cubic tensor of $\Se$ vanishes identically, $T\equiv0$.

(ii) The common connection form is metric: with $g$ as in \eqref{eq:metrik},
\begin{equation}\label{eq:dualitaet}
\partial_w\,g_\mu(u,v)=\Gamma_\mu(w,u;v)+\Gamma_\mu(w,v;u),
\end{equation}
where the left side is the derivative at $0$ of $r\mapsto g_{\mu+rw}(u,v)$.

(iii) $\Gamma_\mu$ is symmetric in its first pair. Consequently it satisfies the Koszul formula
\begin{equation}\label{eq:koszulformel}
2\,\Gamma_\mu(u,v;w)=\partial_u\,g_\mu(v,w)+\partial_v\,g_\mu(u,w)-\partial_w\,g_\mu(u,v),
\end{equation}
and it is the first-kind Levi-Civita form of $g$ relative to the affine structure of $\MX$: the unique connection form on $V_\mu$ that is torsion-free and metric.
\end{theorem}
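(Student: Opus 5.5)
The argument works entirely with the function
\[
G(t,s,r):=\OTe\left(\mu+tu+sv,\ \mu+rw\right)
\]
and its three-argument generalisation $\Phi(a,b,c):=\OTe(\mu+au+bv,\ \mu+cw')$ built from the symmetry $\OTe(\mu',\nu')=\OTe(\nu',\mu')$. By Lemma~\ref{lem:smooth}, $\OTe(\mu+\sum_i t_iu_i,\ \mu+\sum_j r_jw_j)$ is $C^\infty$ near the origin for any finite families of directions in $V_\mu$, so all mixed derivatives exist and commute.

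For (i), I rewrite $\Gamma^\ast_\mu$ using the transposition symmetry $\OTe(\nu_1,\nu_2)=\OTe(\nu_2,\nu_1)$:
\[
\Gamma^\ast_\mu(u,v;w)=-\tfrac12\,\partial_t\partial_s\partial_r\OTe(\mu+tu+sv,\mu+rw).
\]
This is literally $\Gamma_\mu(u,v;w)$. So (i) holds, and $T\equiv0$ by \eqref{eq:kubischertensor}.

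For (ii), consider $H(t,s,r):=\OTe(\mu+rw+tu,\ \mu+rw+sv)$. By \eqref{eq:gF}, polarized, $g_{\mu+rw}(u,v)=-\tfrac12\partial_t\partial_s H(0,0,r)$. To justify the polarization, I apply \eqref{eq:gF} and Theorem~\ref{thm:hessian} at the base point $\mu+rw$, using $V_{\mu+rw}=V_\mu$, along $u+v$, $u$ and $v$. The mixed derivative of $F$ is bilinear in the direction by Lemma~\ref{lem:smooth} and the chain rule.

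Next I write $H=\Psi(r+t',\,r+s')$-type compositions. Concretely, let $L(a,b,c,d):=\OTe(\mu+aw+tu,\ \mu+bw+sv)$, smooth in all four variables. Then $\partial_r$ of $\partial_t\partial_s H$ at $0$ equals
\[
\partial_a\partial_t\partial_s L+\partial_b\partial_t\partial_s L .
\]
The first term is the third derivative with two directions $w,u$ in the first argument and $v$ in the second, namely $-2\Gamma_\mu(w,u;v)$. The second term, after using the symmetry of $\OTe$ to move the two-direction argument to the first slot, is $-2\Gamma_\mu(w,v;u)$. Multiplying by $-\tfrac12$ gives \eqref{eq:dualitaet}.

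The one point needing care is identifying these derivatives of $L$ with \eqref{eq:zusammenhangsformen}. Both are derivatives of the same smooth function of $\OTe$ along affine families in each argument. Multilinearity in the directions follows from Lemma~\ref{lem:smooth} applied with the combined families $\{u,w\}$ and $\{v,w\}$.

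For (iii), symmetry in $(u,v)$ was noted after Lemma~\ref{lem:smooth}: $\mu+tu+sv$ is symmetric under exchanging $(t,u)$ and $(s,v)$. Taking the three cyclic versions of \eqref{eq:dualitaet},
\[
\partial_u g(v,w),\qquad \partial_v g(u,w),\qquad \partial_w g(u,v),
\]
and forming the first plus the second minus the third, the symmetry in the first pair cancels four terms and leaves $2\Gamma_\mu(u,v;w)$. This is \eqref{eq:koszulformel}.

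Uniqueness follows by the same computation. Any trilinear form that is symmetric in its first pair and satisfies \eqref{eq:dualitaet} obeys the Koszul formula, so it is determined by $g$ and its affine derivatives. Hence $\Gamma_\mu$ is the first-kind Levi-Civita form relative to the affine structure of $\MX$.

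The main obstacle is the bookkeeping in (ii): making sure the derivative of $g_{\mu+rw}$ in $r$ is a genuine derivative of a smooth function. This is where Lemma~\ref{lem:smooth} with several directions in each marginal is essential.
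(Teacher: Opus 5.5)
Your proposal is correct and follows the paper's proof essentially step by step. You use the symmetry of $\OTe$ for (i); for (ii) you apply the chain rule to $\OTe(\mu+tu+r_1w,\ \mu+sv+r_2w)$ along $r_1=r_2=r$, combined with the polarized \eqref{eq:gF} at the base point $\mu+rw$; and for (iii) you take the cyclic combination, which also gives uniqueness. The one ingredient you take for granted is the symmetry $\OTe(\nu_1,\nu_2)=\OTe(\nu_2,\nu_1)$ itself, which the paper's proof of (i) derives from three facts: the transposition $(x,y)\mapsto(y,x)$ of $X\times X$, the symmetry of $c$, and the invariance of the relative entropy under a common bijection of both its arguments.
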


\begin{proof}
(i) The transposition $(x,y)\mapsto(y,x)$ of $X\times X$ maps $\Pi(\mu,\nu)$ onto $\Pi(\nu,\mu)$ and $\mu\otimes\nu$ to $\nu\otimes\mu$, and it preserves the transport cost, since $c$ is symmetric (Assumption~\ref{assump:standing}), and the relative entropy, which is unchanged when the same bijection is applied to both of its arguments; so $\OTe(\nu,\mu)=\OTe(\mu,\nu)$, the two third derivatives in \eqref{eq:zusammenhangsformen} coincide, $\Gamma^\ast_\mu=\Gamma_\mu$, and $T\equiv0$ by \eqref{eq:kubischertensor}.

(ii) Both sides of \eqref{eq:dualitaet} are derivatives of $G(t,s,r_1,r_2)=\OTe\left(\mu+tu+r_1w,\,\mu+sv+r_2w\right)$, which is $C^{\infty}$ near the origin by Lemma~\ref{lem:smooth}: by \eqref{eq:gF} and Theorem~\ref{thm:hessian}, extended to two directions by the polarization identity for quadratic forms at the base point $\mu+rw$, where $u,v\in V_{\mu+rw}=V_\mu$, $g_{\mu+rw}(u,v)=-\tfrac12\,\partial_t\partial_s\,G(t,s,r,r)\big|_{t=s=0}$, and the chain rule for $r\mapsto(r,r)$ yields
\[
\partial_w\,g_\mu(u,v)
=-\tfrac12\,\partial_t\partial_s\partial_{r_1}G\big|_{0}-\tfrac12\,\partial_t\partial_s\partial_{r_2}G\big|_{0}
=\Gamma_\mu(u,w;v)+\Gamma^\ast_\mu(v,w;u),
\]
the two terms being the derivatives \eqref{eq:zusammenhangsformen} with $u,w$ in the first argument and $v$ in the second, and with $v,w$ in the second and $u$ in the first. With (i) and the pair symmetry this is \eqref{eq:dualitaet}.

(iii) The pair symmetry was shown after Lemma~\ref{lem:smooth}. Writing \eqref{eq:dualitaet} for $\partial_u\,g_\mu(v,w)$, $\partial_v\,g_\mu(u,w)$ and $\partial_w\,g_\mu(u,v)$, adding the first two identities and subtracting the third, the pair symmetry cancels four of the six terms, and $\Gamma_\mu(u,v;w)+\Gamma_\mu(v,u;w)=2\,\Gamma_\mu(u,v;w)$ remains, which is \eqref{eq:koszulformel}. The same computation goes through for any trilinear form on $V_\mu$ that is symmetric in its first pair and satisfies \eqref{eq:dualitaet}, so it equals the right side of \eqref{eq:koszulformel}, which depends only on $g$.
\end{proof}

\begin{remark}[Self-duality of symmetric divergences]\label{rem:selfdual}
The proof of Theorem~\ref{thm:collapse} uses only the symmetry of the divergence, for (i), and the smoothness of its value along affine families, for (ii) and (iii): every symmetric divergence with these derivatives is self-dual in Eguchi's sense, with the first-kind Levi-Civita form of its metric as its third-order theory. In particular $\Se$ \textit{does not induce two distinct dual connections in the sense of Amari}. A dual pair can instead be obtained from the metric, following \citet{Ay2024Otto} and \citet{AySchwachhoefer2025Torsion}: in the regular setting of Remark~\ref{rem:rieszmaps}, the mixture connection and its dual with respect to $g$ are defined, and their mean is not the Levi-Civita connection of Theorem~\ref{thm:levicivita}, since $g$ is not a Hessian metric (Proposition~\ref{prop:nothessian}) \citep[Corollary~1]{AySchwachhoefer2025Torsion}. Adding to $\OTe$ a function of $\mu$ and a function of $\nu$ that are smooth along affine families changes neither the metric nor the connection forms, by \eqref{eq:selfkill}; an asymmetric third-order theory would therefore require an asymmetric transport problem, for instance an asymmetric cost or reference measure.
\end{remark}

By Theorem~\ref{thm:collapse}(ii), the first variation of the metric is $\partial_wg_\mu(u,v)=\Gamma_\mu(w,u;v)+\Gamma_\mu(w,v;u)$. If $g$ is the Hessian of a function in the affine structure of $\MX$, then $\partial_wg_\mu(u,v)$ is a third derivative of that function and is symmetric in all three directions; by the Koszul formula \eqref{eq:koszulformel} this holds if and only if $\Gamma_\mu(u,v;w)$ is symmetric in $(v,w)$, and by Proposition~\ref{prop:gammapot} if and only if $D^{2}f_\mu(v;w)-D^{2}f_\mu(w;v)$ is constant on the support of $\mu$ for all $v,w\in V_\mu$; when $\mu$ has full support, this holds if and only if $D^{2}f_\mu(v;w)=D^{2}f_\mu(w;v)$ in $\CX/\mathbb R$. This symmetry of the first potential between its two marginals is then a \textit{different one} from the self-duality $\Gamma_\mu=\Gamma^\ast_\mu$, which comes from the transposition and implies neither the symmetry nor its failure; Proposition~\ref{prop:nothessian} shows that it fails.

On the torus the formula of Appendix~\ref{app:thirdvar} for $D^{2}f_\mu$ (Proposition~\ref{prop:Dsqf}) becomes explicit. Let $X=\mathbb T^d=\mathbb R^d/\mathbb Z^d$ with a translation-invariant cost $c(x,y)=\phi(x-y)$, where $\phi$ is even, continuous and not constant, and let $\mu$ be the uniform measure. The translates of the self-potential solve \eqref{eq:fixpunkt} as well, since translations preserve $\mu$ and $c$, so by uniqueness (\autoref{subsec:sinkhorn}) the self-potential is constant. Then $\Kmu$ is the convolution with the probability density $e^{-\phi/\eps}/\int_Xe^{-\phi/\eps}\,dz$, and its eigenfunctions are the cosines $e_m(x)=\cos(2\pi m\cdot x)$, $m\in\mathbb Z^d$, with eigenvalues
\begin{equation}\label{eq:torus-eigen}
\lambda_m=\frac{\int_Xe^{-\phi(z)/\eps}\cos(2\pi m\cdot z)\,dz}{\int_Xe^{-\phi(z)/\eps}\,dz},
\end{equation}
which lie in $(-1,1)$ for $m\neq0$. The cosines are therefore the natural directions at $\mu$, and on them $\Kmu$, $\Hmu$ and the resolvents of Appendix~\ref{app:thirdvar} act diagonally, while the products in $\Xi_\mu$ of \eqref{eq:Xi} mix the modes.

\begin{proposition}[The Sinkhorn metric is not a Hessian metric]\label{prop:nothessian}
In this setting let $n\in\mathbb Z^d\setminus\{0\}$, and let $u$ and $v$ be the measures with densities $e_n$ and $e_{2n}$ with respect to $\mu$. Then, for every $\eps>0$,
\begin{equation}\label{eq:nichthessesch}
\Gamma_\mu(u,v;u)-\Gamma_\mu(u,u;v)=\frac{\eps}{8}\,\frac{\lambda_n\left(\lambda_n-\lambda_{2n}\right)}{\left(1-\lambda_n\right)^{2}\left(1+\lambda_n\right)\left(1+\lambda_{2n}\right)}.
\end{equation}
There is an $n$ for which the right side does not vanish, so $g$ is not a Hessian metric in the affine structure of $\MX$. For the unit circle in $\mathbb R^2$ with half the squared distance of the plane as cost, $c(x,y)=1-\cos(2\pi(x-y))$ on $\mathbb R/\mathbb Z$, the eigenvalues are $\lambda_m=I_m(1/\eps)/I_0(1/\eps)$, with $I_m$ the modified Bessel functions of the first kind; they are positive and strictly decreasing in $|m|$, and $n=1$ works for every $\eps$. The same holds for the Mat\'ern kernels of \autoref{sec:stratum}, with $\lambda_m=\left(\kappa^{2}/(\kappa^{2}+4\pi^{2}|m|^{2})\right)^{s}$.
\end{proposition}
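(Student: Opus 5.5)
The plan is to compute both sides of \eqref{eq:nichthessesch} from the potential representation \eqref{eq:gammapot}, not from the general formula of Proposition~\ref{prop:Dsqf}, by a second-order perturbation expansion of the Schr\"odinger system at the uniform measure, mode by mode. Since $\Gamma_\mu(u,v;u)=-\frac12\langle u,D^2f_\mu(v;u)\rangle$ and $\Gamma_\mu(u,u;v)=-\frac12\langle u,D^2f_\mu(u;v)\rangle$, only the $e_n$-coefficient of the two mixed derivatives $D^2f_\mu(e_{2n}\mu;e_n\mu)$ and $D^2f_\mu(e_n\mu;e_{2n}\mu)$ is needed.

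I would write the system multiplicatively. Set $\alpha=e^{f/\eps}$ and $\beta=e^{g/\eps}$ for the pair $(\mu+s\,\xi_1,\ \mu+r\,\xi_2)$, with $\xi_1,\xi_2\in\{e_n\mu,e_{2n}\mu\}$. The equations read $\alpha(x)\int k_c(x,y)\beta(y)(1+r\,\xi_2(y))\,dy=1$ and the symmetric one, and at $s=r=0$ one has $\alpha=\beta=\amu$, a constant. I would expand $\log\alpha$ and $\log\beta$ to order $s$, $r$ and $sr$. At first order the equations are diagonal in the cosines, because $\Kmu$ is convolution with eigenvalues $\lambda_m$ from \eqref{eq:torus-eigen}. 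This gives the first variations $\partial_s f$ and $\partial_r f$ as explicit multiples of $e_{n}$ or $e_{2n}$, with factors built from $\lambda$, $1-\lambda^2$ and $\eps$; they are the Lavenant et al.\ derivatives recalled in Appendix~\ref{app:thirdvar}.

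At order $sr$ the system becomes $(\id-\Kmu^2)$ applied to the unknown second-order term, equal to a source built from products of first-order terms. The products include $e_n\cdot e_{2n}=\frac12(e_n+e_{3n})$, and also the quadratic terms coming from the exponential and from the perturbed marginal density multiplying the first-order potential. I would project this source onto $e_n$ and invert $(1-\lambda_n^2)$. Pairing with $u$ then gives $\frac12$ times the $e_n$-coefficient. Because $\int e_n\,d\mu=0$, the shift normalization drops out.

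Subtracting the two orders of arguments should cancel the symmetric contributions and leave a term proportional to $\lambda_n-\lambda_{2n}$, with the denominators $(1-\lambda_n)^2(1+\lambda_n)(1+\lambda_{2n})$ arising from the resolvents. The main obstacle is this bookkeeping: many product terms appear, with asymmetric roles for the first marginal (the one paired with $u$) and the second. I would guard against errors by checking the result against the finite-difference limit $\eps\to\infty$, where the metric is the constant MMD form and the difference must vanish at leading order.

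For nonvanishing with a general even, non-constant $\phi$, I would argue as follows. Some $\lambda_m$ with $m\neq0$ is nonzero, because otherwise $e^{-\phi/\eps}$ would be constant. By Riemann--Lebesgue, $\lambda_m\to0$, so there is an $n\ne0$ maximizing $|\lambda_n|>0$ over $m\ne0$. If $\lambda_{2^kn}=\lambda_n$ held for every $k$, this would contradict Riemann--Lebesgue. Hence some $n'=2^kn$ satisfies $\lambda_{n'}\ne0$ and $\lambda_{n'}\ne\lambda_{2n'}$, and the right side of \eqref{eq:nichthessesch} is nonzero; by the discussion preceding the proposition, the metric is then not Hessian.

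For the circle, $e^{-c/\eps}=e^{-1/\eps}e^{\cos(2\pi z)/\eps}$, and the Jacobi--Anger expansion gives $\lambda_m=I_m(1/\eps)/I_0(1/\eps)$. These are positive and strictly decreasing in $|m|$ by the Tur\'an-type monotonicity $I_{m+1}(x)<I_m(x)$ for $x>0$, so $n=1$ works for every $\eps$. For the Mat\'ern kernel, $\lambda_m=\hat G(m)/\hat G(0)$ is positive and strictly decreasing in $|m|$, so any $n\ne0$ works.
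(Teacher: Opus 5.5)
Your plan is essentially the paper's proof. The paper reads the order-$sr$ source off the general formula \eqref{eq:Xi} of Proposition~\ref{prop:Dsqf}, which is exactly the second-order expansion of the Schr\"odinger system that you redo directly at the uniform measure, and its nonvanishing argument (dyadic chain plus Riemann--Lebesgue) and the Bessel and Mat\'ern examples match yours. The bookkeeping you leave open does close as you predict: the $e_n$-coefficient of the source for the $sr$-term of $\log\alpha$ with $(\xi_1,\xi_2)=(e_{2n}\mu,e_n\mu)$, minus the one with $(\xi_1,\xi_2)=(e_n\mu,e_{2n}\mu)$, is $\lambda_n(\lambda_{2n}-\lambda_n)/\bigl(2(1-\lambda_n)(1+\lambda_{2n})\bigr)$, and dividing by $1-\lambda_n^{2}$ and multiplying by $-\eps/4$ gives \eqref{eq:nichthessesch}.
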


\begin{proof}
By \eqref{eq:Kmu}, $\Hmu[\psi\mu]=\Kmu\psi$ for bounded $\psi$. In the notation of Appendix~\ref{app:thirdvar}, $\varphi_u=\lambda_n(1-\lambda_n^{2})^{-1}e_n$ and $\varphi_v=\lambda_{2n}(1-\lambda_{2n}^{2})^{-1}e_{2n}$, and every product in \eqref{eq:Xi} is a product of two of the cosines $e_n$ and $e_{2n}$, with $e_ne_{2n}=\frac12\left(e_n+e_{3n}\right)$ and $e_n^{2}=\frac12\left(1+e_{2n}\right)$. In \eqref{eq:gammaop} only the component along $e_n$ pairs with $u$. Collecting it in the six terms of $\Xi_\mu(v,u)$ and of $\Xi_\mu(u,v)$ yields
\[
\int_Xe_n\left(\Xi_\mu(v,u)-\Xi_\mu(u,v)\right)d\mu=\frac14\,\frac{\lambda_n\left(\lambda_{2n}-\lambda_n\right)}{\left(1-\lambda_n\right)\left(1+\lambda_{2n}\right)},
\]
and since $\left(\id-\Kmu^{2}\right)^{-1}$ acts on $e_n$ by $\left(1-\lambda_n^{2}\right)^{-1}$, \eqref{eq:gammaop} yields \eqref{eq:nichthessesch}. Since $\phi$ is not constant, neither is $e^{-\phi/\eps}$, so $\lambda_n\neq0$ for some $n\neq0$, and $\lambda_m\to0$ as $|m|\to\infty$ by the Riemann-Lebesgue lemma. If $\lambda_{2n}=\lambda_n$ held for every $n$ with $\lambda_n\neq0$, then $\lambda_{2^{j}n}=\lambda_n\neq0$ for all $j$, which is impossible. By the paragraph before the proposition, a Hessian metric has $\Gamma_\mu$ symmetric in its last two arguments. For the circle, $e^{\cos\theta/\eps}=\sum_{m\in\mathbb Z}I_m(1/\eps)\cos(m\theta)$, and $I_m(1/\eps)$ decreases strictly in $m\ge0$.
\end{proof}

For every $\eps$, the mixed second derivative of the first potential is not symmetric in its two marginals, $D^{2}f_\mu(v;w)\neq D^{2}f_\mu(w;v)$ for some $v,w\in V_\mu$. For a cost $\phi$ that does not depend on $\eps$, the asymmetry persists at order $1/\eps$ as $\eps\to\infty$, where the metric tends to the constant, hence Hessian, but possibly degenerate form of the maximum mean discrepancy (Remark~\ref{rem:epslimits}): the right side of \eqref{eq:nichthessesch} is $\hat\phi(n)\left(\hat\phi(n)-\hat\phi(2n)\right)/(8\eps)+O(\eps^{-2})$, with $\hat\phi(m)=\int_X\phi(z)\cos(2\pi m\cdot z)\,dz$ the Fourier coefficients of the cost, and the dyadic argument of the proof, applied to $\hat\phi$ in place of $\lambda$, yields an $n$ with $\hat\phi(n)\left(\hat\phi(n)-\hat\phi(2n)\right)\neq0$. For the Mat\'ern kernels the eigenvalues $\lambda_m$ do not depend on $\eps$, and the right side of \eqref{eq:nichthessesch} is linear in $\eps$.

\begin{remark}[First-kind and second-kind connection]\label{rem:sharp}
Theorem~\ref{thm:collapse} determines the connection through its first-kind form $\Gamma_\mu(u,v;\cdot)$, a linear form on $V_\mu$. By (i) and the envelope identity \eqref{eq:envelope}, applied in the first argument along $w$,
\[
\Gamma_\mu(u,v;w)=\Gamma^\ast_\mu(u,v;w)=\left\langle w,\ \gamma_{u,v}\right\rangle,
\qquad
\gamma_{u,v}:=-\frac12\,\partial_t\partial_s\,f_{\mu,\,\mu+tu+sv}\Big|_{t=s=0}\in\CX/\mathbb R,
\]
the second derivative of the first potential in its second marginal, which exists by Lemma~\ref{lem:smooth}. By Theorem~\ref{thm:gradients}, the form $\Gamma_\mu(u,v;\cdot)$ is represented by a balanced measure, a $\nabla_uv\in\MzX$ with $g_\mu(\nabla_uv,w)=\left\langle w,\gamma_{u,v}\right\rangle$ for all $w\in\MzX$, if and only if $\gamma_{u,v}\in\mathcal D_\mu$, and then $\nabla_uv=\gamma_{u,v}^\sharp=\operatorname{grad}_\mu E(\gamma_{u,v})$, as in weak Riemannian geometry, where the flat map need not be onto, as we have mentioned repeteadly. On every smooth stratum this \textit{is} however the case, with $\gamma_{u,v}^\sharp$ in the tangent space of the stratum and analytic in $(\mu,u,v)$: the Levi-Civita connection of Theorem~\ref{thm:levicivita}.
\end{remark}

On the smooth stratum the representation question of Remark~\ref{rem:sharp} has thus a complete answer. By Proposition~\ref{prop:range}(i) the form $\Gamma_\mu(u,v;\cdot)=\langle\cdot,\gamma_{u,v}\rangle$ is represented on $T_\mu\mathcal P^{\sigma}(X)$ exactly when $\gamma_{u,v}\in H^{\sigma+2s}(X)+\mathbb R$, and $\gamma_{u,v}$ is a second derivative of the potential $f_{\mu,\nu}$ in $\nu$ along densities in $H^{\sigma}(X)$; so the question reduces to the regularity of the potentials of two marginals with densities in $H^{\sigma}(X)$, the two-marginal form of Corollary~\ref{cor:analytic}. Let $\mu=\rho\mu_0$ and $\nu=\varrho\mu_0$ with $\rho,\varrho\in H^{\sigma}(X)$ positive and of mass one, and put $a_{\mu,\nu}:=e^{f_{\mu,\nu}/\eps}$ and $b_{\mu,\nu}:=e^{g_{\mu,\nu}/\eps}$. Exponentiated, the Schr\"odinger system \eqref{eq:schroedingersystem} is
\begin{equation}\label{eq:schrodinger-exp}
a_{\mu,\nu}\,S\left(b_{\mu,\nu}\,\varrho\right)=1,
\qquad
b_{\mu,\nu}\,S\left(a_{\mu,\nu}\,\rho\right)=1,
\end{equation}
with $S$ the operator \eqref{eq:Sop}, and its solutions are the pairs $\left(\lambda a_{\mu,\nu},\,b_{\mu,\nu}/\lambda\right)$, $\lambda>0$ (\autoref{subsec:duality}). For $\nu=\mu$ the pair is $(\amu,\amu)$.

% para que la forma de Levi-Civita este representada hace falta gamma_{u,v} en H^{sigma+2s}, o sea la regularidad de los potenciales con dos marginales distintas
\begin{lemma}[Sobolev regularity and analytic dependence of the potentials]\label{lem:potanalytic}
Let Assumptions~\ref{assump:standing} and~\ref{assump:stratum} hold, and let $\mu,\nu$ be as above.
(i) $a_{\mu,\nu}$ and $b_{\mu,\nu}$ lie in $H^{\sigma+2s}(X)$.
(ii) The operators $L:=M_{a_{\mu,\nu}}SM_{b_{\mu,\nu}\varrho}$ and $L':=M_{b_{\mu,\nu}}SM_{a_{\mu,\nu}\rho}$ satisfy $L1=1$ and $L'1=1$, $L'$ is the adjoint of $L$ between $L^{2}(\nu)$ and $L^{2}(\mu)$, and $\|LL'\|\le(1-\eta)^{2}$ on $L^{2}_{0}(\mu)$ with $\eta:=e^{-\|c\|_\infty/\eps}\min a_{\mu,\nu}\min b_{\mu,\nu}>0$; and $\id-LL'$ restricts to a bijection of $H^{\sigma+2s}(X)\cap L^{2}_{0}(\mu)$.
(iii) The potential normalized by $\int_X\log a_{\mu,\nu}\,d\mu_0=0$ is a real-analytic function of $(\rho,\varrho)$ on the open set of positive mass-one densities in $H^{\sigma}(X)\times H^{\sigma}(X)$, with values in $H^{\sigma+2s}(X)$; so is $b_{\mu,\nu}=1/S(a_{\mu,\nu}\rho)$.
(iv) For $\mu\in\mathcal P^{\sigma}(X)$ and $u=\psi\mu_0$, $v=\chi\mu_0$ in $T_\mu\mathcal P^{\sigma}(X)$, the function $\gamma_{u,v}$ of Remark~\ref{rem:sharp} lies in $H^{\sigma+2s}(X)+\mathbb R$, and $(\rho,\psi,\chi)\mapsto\gamma_{u,v}$ is real-analytic into $H^{\sigma+2s}(X)/\mathbb R$ and bilinear in $(\psi,\chi)$.
\end{lemma}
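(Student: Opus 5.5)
For (i) I would bootstrap the exponentiated Schr\"odinger system \eqref{eq:schrodinger-exp}, as in the proof of Theorem~\ref{thm:scale}(i). The potentials are continuous, so $a:=a_{\mu,\nu}$ and $b:=b_{\mu,\nu}$ are continuous and positive, and $a=1/S(b\varrho)$, $b=1/S(a\rho)$. Put $r_j=\min(\sigma,2js)$. If $a,b\in H^{r_j}(X)$ with $r_j\le\sigma$, then (M2) gives $a\rho,b\varrho\in H^{r_j}(X)$. By (M1), $S(b\varrho)$ and $S(a\rho)$ lie in $H^{r_j+2s}(X)$, which is an algebra, and they are positive. So (M2) puts $a,b$ in $H^{r_j+2s}(X)$. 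After finitely many steps $a\rho,b\varrho\in H^{\sigma}(X)$, and one last application gives $a,b\in H^{\sigma+2s}(X)$.

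For (ii), $L1=aS(b\varrho)=1$ and $L'1=1$ are exactly \eqref{eq:schrodinger-exp}. The adjointness follows from the symmetry of $k_c$: both $\langle L\varphi,\chi\rangle_{L^2(\mu)}$ and $\langle\varphi,L'\chi\rangle_{L^2(\nu)}$ equal $\iint\chi(x)\,a(x)\,k_c\,b(y)\,\varphi(y)\,\rho\varrho\,d\mu_0d\mu_0$.

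For the contraction I would use the Doeblin argument after Proposition~\ref{prop:operators}. The transition density of $L$ with respect to $\nu$ is $a(x)k_c(x,y)b(y)\ge\eta$. Hence $L=\eta P_\nu+(1-\eta)Q$, where $P_\nu$ is the $\nu$-mean and $Q$ is a Markov operator. I want $\|L\|_{L^2_0(\nu)\to L^2_0(\mu)}\le1-\eta$, which needs $Q$ to be an $L^2$ contraction from $\nu$ to $\mu$. By Jensen, $\|Q\varphi\|_{L^2(\mu)}^2\le\int Q(\varphi^2)\,d\mu$. Since $L^\ast1=L'1=1$, we get $\int L(\varphi^2)\,d\mu=\int\varphi^2\,d\nu$. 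With $\int Q(\varphi^2)\,d\mu=(\int L(\varphi^2)\,d\mu-\eta\int\varphi^2\,d\nu)/(1-\eta)$, this equals $\int\varphi^2\,d\nu$. So $\|Q\|\le1$, and $L$ restricted to mean-zero functions equals $(1-\eta)Q$ there, modulo a constant that the centering removes. The same argument applies to $L'$, which gives $\|LL'\|\le(1-\eta)^2$ on $L^2_0(\mu)$.

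Bijectivity of $\id-LL'$ on $H^{\sigma+2s}_0(\mu)$ then follows as in Lemma~\ref{lem:sobolevK}(ii). Invert on $L^2_0(\mu)$ by the Neumann series, and upgrade regularity by iterating $\psi=\varphi+LL'\psi$, since $LL'$ raises regularity by $2s$ up to level $\sigma+2s$ by (i), (M1) and (M2).

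For (iii) I would use the analytic implicit function theorem. With $\alpha=\log a$ normalized by $\int\alpha\,d\mu_0=0$, consider
\[
\mathcal F(\rho,\varrho,\alpha)=\alpha+\log S\bigl(\varrho/S(e^{\alpha}\rho)\bigr)-c(\alpha),
\]
where $c(\alpha)$ is the constant making the value mean-zero in $\mu_0$. This map is analytic from $H^\sigma\times H^\sigma\times H^{\sigma+2s}_{\mathrm{norm}}$ into $H^{\sigma+2s}_{\mathrm{norm}}$ by (M1) and (M2). Its zero set is exactly the normalized solutions of \eqref{eq:schrodinger-exp}. Differentiating in $\alpha$ at a solution, multiplication by $1/a$ conjugates the derivative to $\id-LL'$, up to the rank-one normalization on constants. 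By (ii) this is an isomorphism, so the solution is analytic, and $b=1/S(a\rho)$ is analytic as well.

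For (iv) I would note that $\gamma_{u,v}=-\frac12\partial_t\partial_s f_{\mu,\mu+tu+sv}$, where $f=\eps\log a$ with the second marginal $\varrho=\rho+t\psi+s\chi$ affine. By (iii) and analyticity of $\log$ on positive elements of $H^{\sigma+2s}$ by (M2), this derivative lies in $H^{\sigma+2s}(X)$ modulo constants. It is analytic in $(\rho,\psi,\chi)$, and it is bilinear in $(\psi,\chi)$ as a second derivative along the affine family.

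The main obstacle I expect is the invertibility of the linearization in (iii). It needs the conjugation to $\id-LL'$ to be made precise, with the correct handling of the shift degeneracy by the normalization, on the correct Sobolev level.
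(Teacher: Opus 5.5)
Your proposal is correct and follows the paper's proof essentially step for step: the same bootstrap through the exponentiated system for (i), the same Doeblin splitting of the kernel $a\,k_c\,b\ge\eta$ with Jensen's inequality and the iterated regularity upgrade for (ii), the analytic implicit function theorem with linearization $\id-LL'$ modulo constants for (iii), and the mixed second derivative along the affine family for (iv). The only difference is how you fix the gauge in (iii): you use the coordinate $\log a$ on the $\mu_0$-normalized subspace with a centred value, whereas the paper uses $a$ on all of $H^{\sigma+2s}(X)$ with $\int_X\log a\,d\mu_0$ added to the equation. In either form, identifying the zero set with the normalized solution uses that $\rho$ and $\varrho$ have mass one (integrating $a\,S(\varrho/S(a\rho))$ against $\mu$ gives $\int_X\varrho\,d\mu_0=1$), a step you should write out rather than assert.
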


\begin{proof}
Write $a=a_{\mu,\nu}$ and $b=b_{\mu,\nu}$ in this proof. (i) The potentials are continuous (\autoref{subsec:duality}), so $b\varrho\in L^{2}(\mu_0)$, $S(b\varrho)\in H^{2s}(X)$ by (M1) with $r=0$, and $S(b\varrho)\ge e^{-\|c\|_\infty/\eps}\min(b\varrho)>0$; by (M2) at the level $2s>d/2$, $a=1/S(b\varrho)\in H^{2s}(X)$. Then $a\rho\in H^{\min(2s,\sigma)}(X)$, $S(a\rho)\in H^{\min(2s,\sigma)+2s}(X)$, and $b=1/S(a\rho)$ lies in that space; returning to $a$ raises its regularity by $2s$ again, and so on, until $H^{\sigma+2s}(X)$ is reached, where the regularity of $\rho$ and $\varrho$ stops the iteration.

(ii) $L1=aS(b\varrho)=1$ and $L'1=1$ are the two equations of \eqref{eq:schrodinger-exp}, and $\langle L\varphi,\chi\rangle_{L^2(\mu)}=\int_X\int_X a(x)k_c(x,y)b(y)\varphi(y)\chi(x)\,d\nu(y)\,d\mu(x)=\langle\varphi,L'\chi\rangle_{L^2(\nu)}$. The kernel of $L$ with respect to $\nu$ is $\ell(x,y)=a(x)k_c(x,y)b(y)\ge\eta$, with $\int_X\ell(x,y)\,d\mu(x)=b(y)S(a\rho)(y)=1$; so $m:=(\ell-\eta)/(1-\eta)$ is the kernel of a Markov operator $M$ from $L^{2}(\nu)$ to $L^{2}(\mu)$ with $\int_Xm(x,y)\,d\mu(x)=1$, and Jensen's inequality yields $\|M\varphi\|_{L^2(\mu)}\le\|\varphi\|_{L^2(\nu)}$. For $\varphi\in L^{2}_{0}(\nu)$, $L\varphi=(1-\eta)M\varphi$, so $\|L\varphi\|_{L^2(\mu)}\le(1-\eta)\|\varphi\|_{L^2(\nu)}$, and symmetrically for $L'$; both map functions of mean zero to functions of mean zero, since $L'1=1$ and $L1=1$. Hence $\|LL'\|\le(1-\eta)^{2}$ on $L^{2}_{0}(\mu)$, $LL'$ is self-adjoint on $L^{2}(\mu)$, and $\id-LL'$ is invertible on $L^{2}_{0}(\mu)$. By (i), (M1) and (M2), $L$ and $L'$ map $H^{r}(X)$ into $H^{\min(r,\sigma)+2s}(X)$ for $0\le r\le\sigma+2s$, so $\id-LL'$ maps $H^{\sigma+2s}(X)\cap L^{2}_{0}(\mu)$ into itself, injectively; and for $\varphi$ in this space the solution $\alpha=(\id-LL')^{-1}\varphi\in L^{2}_{0}(\mu)$ satisfies $\alpha=\sum_{j<n}(LL')^{j}\varphi+(LL')^{n}\alpha$, which lies in $H^{\sigma+2s}(X)$ if $4ns\ge\sigma+2s$.

% ojo con la normalizacion int log a dmu_0 = 0: sin ella D_a Z tiene Kern, las constantes
(iii) For positive $a\in H^{\sigma+2s}(X)$ and positive mass-one $\rho,\varrho\in H^{\sigma}(X)$ let
\[
\hat{\mathcal Z}(a;\rho,\varrho):=a\,S\left(\varrho/S(a\rho)\right)-1+\left(\int_X\log a\,d\mu_0\right)\mathbf 1 ,
\]
real-analytic with values in $H^{\sigma+2s}(X)$ by (M1), (M2) and the analyticity of $\log$ on the positive elements. The first two terms integrate to zero against $\mu$, by Fubini's theorem and the symmetry of $k_c$, so $\hat{\mathcal Z}(a;\rho,\varrho)=0$ if and only if $(a,1/S(a\rho))$ solves \eqref{eq:schrodinger-exp} and $\int_X\log a\,d\mu_0=0$, that is, $a$ is the normalized $a_{\mu,\nu}$. At a solution, with $\alpha=a\alpha'$ and $L,L'$ as in (ii), \eqref{eq:schrodinger-exp} yields
\[
D_a\hat{\mathcal Z}(a;\rho,\varrho)[a\alpha']=(\id-LL')\alpha'+\left(\int_X\alpha'\,d\mu_0\right)\mathbf 1 ,
\]
an isomorphism of $H^{\sigma+2s}(X)$: by (ii), $\int_X(\id-LL')\alpha'\,d\mu=0$, so integration against $\mu$ determines $\int_X\alpha'\,d\mu_0$, and in the remaining equation $(\id-LL')\alpha'=\varphi-\left(\int_X\varphi\,d\mu\right)\mathbf 1$ the operator $\id-LL'$ has the constants as kernel and is a bijection of $H^{\sigma+2s}(X)\cap L^{2}_{0}(\mu)$ by (ii), the constant being fixed by the integral condition. The implicit function theorem for real-analytic maps between Banach spaces \citep[Chapter~X, Section~2]{Dieudonne1960} then yields $a_{\mu,\nu}$ as a real-analytic function of $(\rho,\varrho)$ near every point, with values in $H^{\sigma+2s}(X)$, and $b_{\mu,\nu}=1/S(a_{\mu,\nu}\rho)$ with it.

(iv) For $\mu\in\mathcal P^{\sigma}(X)$ the density $\rho$ is positive and lies in $H^{\sigma}(X)$ (Theorem~\ref{thm:scale}(i)), and $\rho+t\psi+s\chi$ is a positive mass-one density in $H^{\sigma}(X)$ for small $t,s$, since $\int_X\psi\,d\mu_0=\int_X\chi\,d\mu_0=0$. By (iii), $(t,s)\mapsto f_{\mu,\mu+tu+sv}=\eps\log a_{\mu,\mu+tu+sv}$ is real-analytic into $H^{\sigma+2s}(X)$ for the normalization of (iii), so $\gamma_{u,v}=-\frac12\partial_t\partial_sf_{\mu,\mu+tu+sv}|_{t=s=0}$ lies in $H^{\sigma+2s}(X)$; the normalization of Remark~\ref{rem:sharp} differs from it by a constant depending on $(t,s)$, whose mixed derivative is a constant. The second derivative of an analytic map along an affine family is bilinear in the directions and analytic in the base point.
\end{proof}

% aqui se construye el Zusammenhang: nabla_u v es el sharp de gamma_{u,v}, que existe en cada estrato por prop:range(i) y lem:potanalytic(iv)
\begin{theorem}[Levi-Civita connection on the smooth stratum]\label{thm:levicivita}
Let Assumptions~\ref{assump:standing} and~\ref{assump:stratum} hold. For $\mu\in\mathcal P^{\sigma}(X)$ and $u,v\in T_\mu\mathcal P^{\sigma}(X)$ define
\begin{equation}\label{eq:zusammenhang}
\nabla_uv:=\gamma_{u,v}^{\sharp}=\frac{2}{\eps}\left(\Kmu^{-1}-\Kmu\right)\Pi_0\gamma_{u,v}\cdot\mu .
\end{equation}
(i) $\nabla_uv\in T_\mu\mathcal P^{\sigma}(X)$; it is the unique balanced measure with $g_\mu(\nabla_uv,w)=\langle w,\gamma_{u,v}\rangle$ for all $w\in\MzX$, and $g_\mu(\nabla_uv,w)=\Gamma_\mu(u,v;w)$ for $w\in V_\mu$.
(ii) In the density chart of Theorem~\ref{thm:scale}, $(\mu,u,v)\mapsto\nabla_uv$ is real-analytic and bilinear in $(u,v)$.
(iii) $\nabla$ is the Levi-Civita connection of the weak Riemannian metric $g^{\sigma}$ on $\mathcal P^{\sigma}(X)$: torsion-free, metric, and unique.
(iv) If $\sigma'>\sigma$ and Assumption~\ref{assump:stratum} holds for $\sigma'$, the connection of $\mathcal P^{\sigma'}(X)$ is the restriction of that of $\mathcal P^{\sigma}(X)$; on the torus with the Mat\'ern kernel, the common restriction to $\mathcal P^{\infty}_{+}(X)$ is the Levi-Civita connection of \citet[Theorem~1]{AySchwachhoefer2025Torsion}.
\end{theorem}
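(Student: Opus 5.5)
The plan is to assemble the theorem from Lemma~\ref{lem:potanalytic}(iv), Proposition~\ref{prop:range}(i), Theorem~\ref{thm:gradients} and Theorem~\ref{thm:collapse}, in the order of the four parts.

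For (i), fix $\mu=\rho\mu_0\in\mathcal P^\sigma(X)$ and $u,v\in T_\mu\mathcal P^\sigma(X)$. By Lemma~\ref{lem:potanalytic}(iv), $\gamma_{u,v}\in H^{\sigma+2s}(X)+\mathbb R$. Proposition~\ref{prop:range}(i) then gives that $\gamma_{u,v}^\sharp$ is defined by the formula \eqref{eq:zusammenhang}, with density in $H^\sigma_0(\mu)$, so $\nabla_uv\in T_\mu\mathcal P^\sigma(X)$. Since $H^{\sigma+2s}_0(\mu)$ lies in $\Kmu(L^2_0(\mu))$ by Lemma~\ref{lem:sobolevK}(i), we have $\gamma_{u,v}\in\mathcal D_\mu$. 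Theorem~\ref{thm:gradients}(i) then gives existence and uniqueness of the representing balanced measure with $g_\mu(\nabla_uv,w)=\langle w,\gamma_{u,v}\rangle$ for all $w\in\MzX$. The identity with $\Gamma_\mu$ for $w\in V_\mu$ is Remark~\ref{rem:sharp}: use $\Gamma_\mu=\Gamma^\ast_\mu$ from Theorem~\ref{thm:collapse}(i), then the envelope identity \eqref{eq:envelope} in the first argument. This last step needs $u,v\in V_\mu$. Tangent vectors of the stratum have $H^\sigma$ densities, which are continuous and hence bounded, so $T_\mu\mathcal P^\sigma(X)\subset V_\mu$.

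For (ii), write the map in the density chart as the composition
\[
(\rho,\psi,\chi)\mapsto\Pi_0\gamma_{u,v}\mapsto\tfrac{2}{\eps}\Theta_\mu^{-1}\cdots .
\]
The first map is real-analytic into $H^{\sigma+2s}(X)/\mathbb R$ by Lemma~\ref{lem:potanalytic}(iv), and $\Pi_0$ depends analytically on $\rho$.

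The second map needs the analytic dependence of $\Kmu^{-1}-\Kmu$ on $\mu$, as an operator from $H^{\sigma+2s}$ to $H^\sigma$. I would work in the coordinate $h=\Phi_\sigma^{-1}(\rho)$, which is analytic by Theorem~\ref{thm:scale}(i). There $\Kmu^{-1}\varphi=h^{-1}S^{-1}((Sh)\varphi)$ by Lemma~\ref{lem:sobolevK}(i), and $\Kmu=M_aSM_h$. Each factor is analytic in $h$ by (M1), (M2) and Corollary~\ref{cor:analytic}, now taken as operators between Sobolev levels rather than on $L^2$. Multiplying by $\rho$ to return to a density finishes (ii). Bilinearity is inherited from Lemma~\ref{lem:potanalytic}(iv).

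I expect this step to be the main obstacle. Corollary~\ref{cor:analytic} is stated only in $\mathcal L(L^2(\mu_0))$, so the bounded-operator analyticity on the Sobolev scale has to be re-verified here. A further point is that $\Pi_0$ and $\Kmu^{-1}$ are taken on mean-zero subspaces that move with $\mu$. I would handle this with the projection trick $P_h$ from the proof of Corollary~\ref{cor:analytic}.

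For (iii), torsion-freeness of the connection of constant fields means $\nabla_uv=\nabla_vu$. This follows from $\gamma_{u,v}=\gamma_{v,u}$, which holds because mixed derivatives commute. Metricity, in the chart where the fields are constant, means
\[
\partial_w g_\mu(u,v)=g_\mu(\nabla_wu,v)+g_\mu(\nabla_wv,u).
\]
This is \eqref{eq:dualitaet} combined with (i). Uniqueness follows from the Koszul formula \eqref{eq:koszulformel}: any torsion-free metric connection has first-kind form equal to the right side of \eqref{eq:koszulformel}, and the flat map is injective by Proposition~\ref{prop:schwach}, so the connection is determined. For general vector fields one extends by the usual derivative term $Dv[u]$ in the affine chart.

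For (iv), $\gamma_{u,v}$ is defined intrinsically by the measures, and the Riesz map does not depend on $\sigma$. By Theorem~\ref{thm:scale}(ii), the connection on $\mathcal P^{\sigma'}(X)$ is therefore the restriction of the one on $\mathcal P^\sigma(X)$. On $\mathcal P^\infty_+(X)$, the metric is regular in the sense of Remark~\ref{rem:rieszmaps}. Ay and Schwachh\"ofer's Levi-Civita connection is the unique torsion-free metric connection, so uniqueness identifies it with ours.
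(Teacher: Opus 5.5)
Your proposal is correct and follows the paper's proof essentially step for step. You use Lemma~\ref{lem:potanalytic}(iv), Proposition~\ref{prop:range}(i), Lemma~\ref{lem:sobolevK}(i) and Theorem~\ref{thm:gradients}(i) for (i); the factorizations $\Kmu^{-1}=M_{1/h}S^{-1}M_{Sh}$ and $\Kmu=M_aSM_h$ in the coordinate $h=\Phi_\sigma^{-1}(\rho)$ for (ii); Theorem~\ref{thm:collapse} with the Koszul formula and the injectivity of the flat map for (iii); and $\sigma$-independence plus uniqueness for (iv).

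The obstacle you flag in (ii) is milder than you fear. First, (M1) and (M2) give the analyticity of these factors directly in $\mathcal L(H^{\sigma+2s}(X),H^{\sigma}(X))$. Second, no projection trick is needed: the formula of Lemma~\ref{lem:sobolevK}(i) inverts $\Kmu:H^{\sigma}(X)\to H^{\sigma+2s}(X)$ on the whole space, and $\Kmu^{-1}-\Kmu$ annihilates the constants, so the moving mean-zero subspace plays no role.
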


\begin{proof}
(i) By Lemma~\ref{lem:potanalytic}(iv), $\Pi_0\gamma_{u,v}\in H^{\sigma+2s}(X)\cap L^{2}_{0}(\mu)$, so by Proposition~\ref{prop:range}(i) the functional $w\mapsto\langle w,\gamma_{u,v}\rangle$ on $T_\mu\mathcal P^{\sigma}(X)$ is the image under the flat map of $\gamma_{u,v}^{\sharp}$, whose density lies in $H^{\sigma}(X)\cap L^{2}_{0}(\mu)$; and $\langle w,\gamma_{u,v}\rangle=\Gamma_\mu(u,v;w)$ for $w\in V_\mu$ by Remark~\ref{rem:sharp}. By Lemma~\ref{lem:sobolevK}(i), $\Pi_0\gamma_{u,v}=\Kmu\varphi$ with $\varphi\in H^{\sigma}(X)\cap L^{2}_{0}(\mu)$, so $(\id-\Kmu^{2})\Pi_0\gamma_{u,v}=\Hmu\left[(\id-\Kmu^{2})\varphi\,\mu\right]$ and $\gamma_{u,v}\in\mathcal D_\mu$; Theorem~\ref{thm:gradients}(i) then yields the uniqueness among all balanced measures.

(ii) $\gamma_{u,v}$ is analytic and bilinear by Lemma~\ref{lem:potanalytic}(iv); $\Pi_0$ is affine in $\rho$; and $\Kmu^{-1}=M_{1/h}S^{-1}M_{Sh}$ (Lemma~\ref{lem:sobolevK}(i)) and $\Kmu$ depend analytically on $\rho$ through $h=\Phi_\sigma^{-1}(\rho)$, by Theorem~\ref{thm:scale}(i), (M2) and Corollary~\ref{cor:analytic}.

(iii) The density chart is an open subset of an affine hyperplane of $H^{\sigma}(X)$, so by \citet[Section~2.2]{BauerBruverisMichor2014}, (i) and (ii) define a covariant derivative, $\nabla_UV=DV[U]+\nabla_{U(\mu)}V(\mu)$ for vector fields $U,V$ in the chart, whose first-kind form on constant fields is $\Gamma_\mu$. It is torsion-free because $\Gamma_\mu$ is symmetric in its first pair and metric because of \eqref{eq:dualitaet}, both by Theorem~\ref{thm:collapse}; and a torsion-free metric connection of $g^{\sigma}$ has the first-kind form \eqref{eq:koszulformel}, so it is unique, the flat map being injective (Proposition~\ref{prop:schwach}).

(iv) The right side of \eqref{eq:zusammenhang} does not involve $\sigma$, and by Theorem~\ref{thm:scale}(ii) a measure of $\mathcal P^{\sigma'}(X)$ has the same chart on both levels, with $T_\mu\mathcal P^{\sigma'}(X)\subset T_\mu\mathcal P^{\sigma}(X)$. On $\mathcal P^{\infty}_{+}(X)$, the restriction is a torsion-free metric connection for the metric, which is regular in the sense of \citet{AySchwachhoefer2025Torsion} by Remark~\ref{rem:rieszmaps}, and their Theorem~1 and the uniqueness in (iii) identify it with their Levi-Civita connection.
\end{proof}

%% file: sec6.tex
\section{Discussion and open directions}\label{sec:discussion}

Beside the Fisher-Rao geometry of the Kullback-Leibler divergence and the Otto geometry of the Wasserstein distance, the Sinkhorn divergence determines a new information geometry. By the gradient characterization of Theorem~\ref{thm:gradients}, its metric is of the same kind as the Fisher-Rao and the Otto metric: each of the three is the Fisher-Rao metric twisted by an operator $\Theta_\mu$ in the sense of \citet{AySchwachhoefer2025Torsion}, and its cotangent form is built from a process reversible for $\mu$ (Remark~\ref{rem:rieszmaps}). Since the Sinkhorn operator $\Theta_\mu$ is compact, on the stratum the Sinkhorn metric is bounded by a multiple of the Fisher-Rao metric, and the inequality does not hold in the other direction (Remark~\ref{rem:frsinkhorn}). Like the Sinkhorn divergence, which interpolates between optimal transport and the maximum mean discrepancy, the metric tensor has two limits in $\eps$ (Remark~\ref{rem:epslimits}). For a fixed cost and $\eps\to\infty$, $\Theta_\mu$ converges in operator norm on $L^2_0(\mu)$, and the metric tensor tends to the constant form of the maximum mean discrepancy with kernel $-c/2$; this form may be degenerate, and in that case it does not have a Riesz map. On the torus with the periodized Gaussian kernel and for smooth positive densities, the metric tends formally to the Otto metric as $\eps\to0$, and the sphere coordinate to the Hellinger embedding of the Fisher-Rao geometry. The limit is formal because the Laplace expansion of $\Kmu$ does not extend to the domain of the Riesz map; at the uniform measure it holds exactly on trigonometric polynomials. For finitely supported measures with fixed positive weights, whose distinct atoms vary smoothly in a compact subset of $\mathbb R^{d}$ with the quadratic cost, \citet{MordantXu2026} prove the convergence of the Sinkhorn metric and its derivatives of every fixed order, as $\eps\to0$ and uniformly on compact sets of atom configurations, to those of the Wasserstein metric of the atoms, and they explain why their argument does not extend to measures with a density; for general densities, the convergence of the metric tensor, of the Riesz map and of the sphere coordinate to their Otto and Fisher-Rao counterparts as $\eps\to0$ remains to be proved.

A weak Riemannian metric need not have a Levi-Civita connection or a geodesic flow, since its flat map is not onto. The Sinkhorn metric has a Levi-Civita connection on every smooth stratum (Theorem~\ref{thm:levicivita}), and in the density chart its geodesic equation is thus an ordinary differential equation with real-analytic right side, so local geodesics and the exponential map exist on each stratum. In current ongoing work, we derive the geodesic equation in the sphere coordinate and study the regularity of geodesics along the scale of strata, their relation to the minimizing paths of the induced distance of \citet[Section~5]{lavenant_luckhardt_mordant_et_al_riemannian_2025}, and the curvature of the Sinkhorn metric. Every object of this paper on $\PX$ is built only from the cost and is therefore invariant under the isometries of the cost, the homeomorphisms $\zeta$ of $X$ with $c(\zeta x,\zeta y)=c(x,y)$ for all $x,y\in X$, and the smooth stratum with its metric and its Levi-Civita connection under those cost isometries that are smooth diffeomorphisms, whereas the Fisher-Rao metric is invariant under \textit{every} homeomorphism of the sample space; a characterization of Chentsov type would therefore need axioms that relate the metric to the cost, for instance naturality under cost-preserving maps between pairs of sample space and cost, and compatibility in $\eps$.

In further ongoing work we develop the statistical side of the geometry, beginning with the gradient flow of the Bayesian free energy in the Sinkhorn metric, the geometry in which \citet{HardionLavenant2025} obtain gradient flows of potential energies from the minimizing-movement scheme with $\Se$ in place of the squared Wasserstein distance; at measures with a positive continuous density, the free energy is finite, and its gradient exists by Theorem~\ref{thm:gradients} wherever its first variation lies in $\mathcal D_\nu$. Estimation from samples concerns the gradients of mean values of continuous observables at empirical measures and is an inverse problem: the Riesz map amplifies the directions of small eigenvalues of $\Kmu$, which decay fast for a smooth kernel, by the factors $1/\phi_S(\lambda)$ of Remark~\ref{rem:frsinkhorn}, so the estimation of Sinkhorn gradients is unstable in those directions and requires for regularization, whereas the Riesz map of the Fisher-Rao metric is the identity on centered functions; the resolvent $(\id-\Kmu^{2})^{-1}$ of the Riesz map also appears in the central limit theorem of related work of \citet{harchaoui_liu_pal_asymptotics_2024} for random matchings weighted by a Gibbs measure.

%% file: app_proofs.tex
\iflong\else
\section{\texorpdfstring{Proofs for Section~\ref{sec:stratum}}{Proofs for Section 3}}\label{app:proofs-hilbert}

\begin{proof}[Proof of Lemma~\ref{lem:douglas}]
\PrfDouglas
\end{proof}

\begin{proof}[Proof of Proposition~\ref{prop:stratum}]
\PrfStratum
\end{proof}

\begin{proof}[Proof of Corollary~\ref{cor:analytic}]
\PrfAnalytic
\end{proof}

\begin{proof}[Proof of Theorem~\ref{thm:scale}]
\PrfScale
\end{proof}
\fi

%% file: app_thirdvar.tex
\section{An operator formula for the connection form}\label{app:thirdvar}

This appendix computes the operator form of the mixed second derivative $D^{2}f_\mu$ of the first Schr\"odinger potential, through which Proposition~\ref{prop:gammapot} expresses the connection form and on which the proof of Proposition~\ref{prop:nothessian} rests. Throughout, $\mu\in\PX$ is fixed, directions are taken in the vertical class $V_\mu$ of \autoref{sec:tensors}, the operators $\Kmu$ and $\Hmu$ are those of \autoref{subsec:operators}, every resolvent is taken in the resolvent convention, and we abbreviate
\[
\varphi_v:=\left(\id-\Kmu^{2}\right)^{-1}\Hmu[v]\ \in\CX
\qquad\text{for }v\in V_\mu,
\]
well defined and $\mu$-mean-zero since $\Hmu[v]$ is mean-zero for balanced $v$ by \eqref{eq:selfnorm}. For $\psi\in\CX$ and $\tau\in V_\mu$ we write $\psi\,\tau$ for the measure $d\left(\psi\tau\right)=\psi\,d\tau$, and products of functions are pointwise.

The following convention makes every displayed derivative an element of $\CX$ rather than an equivalence class. Among the smooth solution pairs obtained from Lemma~\ref{lem:smooth} for the two-parameter family $\left(\mu+sv,\,\mu+rw\right)$ we fix the gauge
\[
\left\langle f_{s,r},\,\mu\right\rangle=\left\langle\fmu,\,\mu\right\rangle,
\qquad
g_{s,r}:=T_\eps\left(f_{s,r},\,\mu+sv\right),
\]
so that $\left(f_{0,0},g_{0,0}\right)=\left(\fmu,\fmu\right)$, the pair solves the Schr\"odinger system \eqref{eq:schroedingersystem} exactly, and all parameter derivatives of $f_{s,r}$ at the origin are $\mu$-mean-zero functions.

The system is differentiated through the second derivatives of the entropic $c$-transform at the diagonal, which are expressed by the \emph{conditional covariance} of the self-kernel,
\begin{equation}\label{eq:covk}
\operatorname{Cov}_\mu\left(\psi_1,\psi_2\right):=\Kmu(\psi_1\psi_2)-(\Kmu\psi_1)\cdot(\Kmu\psi_2)\ \in\CX,
\end{equation}
the covariance of $\psi_1,\psi_2$ under the Markov kernel $k_\mu(\cdot,y)\,d\mu$, evaluated at $y$.

\begin{lemma}[Derivatives of the entropic $c$-transform at the diagonal]\label{lem:Tsecond}
For $\psi,\psi_1,\psi_2\in\CX$ and $\tau,\tau_1,\tau_2\in V_\mu$, the map $T_\eps$ of \eqref{eq:Teps} is twice differentiable at $\left(\fmu,\mu\right)$ along function and vertical-measure directions, with first derivatives \citep[Lemma~3.13]{lavenant_luckhardt_mordant_et_al_riemannian_2025}
\begin{equation}\label{eq:DTeps}
D_{\varphi}T_\eps[\psi]=-\Kmu\psi,
\qquad
D_{\rho}T_\eps[\tau]=-\eps\,\Hmu[\tau],
\end{equation}
and second derivatives
\begin{gather*}
D^{2}_{\varphi\varphi}T_\eps\left[\psi_1,\psi_2\right]=-\frac{1}{\eps}\operatorname{Cov}_\mu\left(\psi_1,\psi_2\right),
\qquad
D^{2}_{\rho\rho}T_\eps\left[\tau_1,\tau_2\right]=\eps\,\Hmu\left[\tau_1\right]\cdot\Hmu\left[\tau_2\right],\\
D^{2}_{\varphi\rho}T_\eps\left[\psi;\tau\right]=-\left(\Hmu\left[\psi\,\tau\right]-(\Kmu\psi)\cdot\Hmu[\tau]\right).
\end{gather*}
\end{lemma}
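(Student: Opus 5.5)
The plan is a direct Taylor expansion of $T_\eps(\varphi,\rho)(y)=-\eps\log Z(\varphi,\rho)(y)$, where
\[
Z(\varphi,\rho)(y):=\int_X e^{(\varphi(x)-c(x,y))/\eps}\,d\rho(x),
\]
around $(\fmu,\mu)$. Write $\varphi=\fmu+\psi$ and $\rho=\mu+\tau$ with $\tau\in V_\mu$. The key observation is that the tilt normalizes the integrand. Multiply and divide by $e^{\fmu(y)/\eps}$. By \eqref{eq:selfnorm} and \eqref{eq:selbstkern},
\[
Z(\fmu+\psi,\mu+\tau)(y)=e^{-\fmu(y)/\eps}\int_X e^{\psi(x)/\eps}\,k_\mu(x,y)\,d(\mu+\tau)(x),
\]
and so
\[
T_\eps(\fmu+\psi,\mu+\tau)=\fmu-\eps\log W(\psi,\tau),
\qquad
W(\psi,\tau)(y):=\int_X e^{\psi(x)/\eps}\,k_\mu(x,y)\,d(\mu+\tau)(x),
\]
with $W(0,0)=1$. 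Differentiability comes first. The map $(\psi,\tau)\mapsto W$ is smooth from $\CX\times V_\mu$ into $\CX$: the map $\psi\mapsto e^{\psi/\eps}$ is analytic on $\CX$, and $\tau$ enters linearly through its bounded density against the continuous kernel $k_\mu$. Since $W$ stays near $1$, the composition with $\log$ is smooth as well. So twice differentiability needs no further argument, and it remains to compute the coefficients.

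Next I expand $W$ to second order. Keep the terms $\psi$, $\psi^2$, $\tau$ and $\psi\tau$, and drop the term $\tau\tau$, which does not occur because $W$ is affine in $\tau$:
\[
W=1+\frac1\eps\Kmu\psi+\frac{1}{2\eps^2}\Kmu(\psi^2)+\Hmu[\tau]+\frac1\eps\Hmu[\psi\tau]+\dots
\]
Here I use $\Kmu\psi=\int\psi\,k_\mu(x,\cdot)\,d\mu$ and $\Hmu[\tau]=\int k_\mu(x,\cdot)\,d\tau$ from \eqref{eq:Kmu}. Then I use $-\eps\log(1+z)=-\eps z+\tfrac{\eps}{2}z^2+\dots$ and collect terms by type.

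The linear terms give $-\Kmu\psi$ and $-\eps\Hmu[\tau]$. These match \eqref{eq:DTeps} and the cited Lemma~3.13.

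For the $\psi\psi$ part, the quadratic form is $-\tfrac{1}{2\eps}\Kmu(\psi^2)+\tfrac{1}{2\eps}(\Kmu\psi)^2$. Polarizing, the second derivative is $-\tfrac1\eps\operatorname{Cov}_\mu(\psi_1,\psi_2)$.

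For the $\tau\tau$ part, only the square of $z$ contributes, namely $\tfrac{\eps}{2}\Hmu[\tau]^2$. Polarizing gives $\eps\,\Hmu[\tau_1]\Hmu[\tau_2]$.

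For the mixed part, the coefficient is $-\Hmu[\psi\tau]$ from the linear term, plus the cross term $\eps\cdot\tfrac1\eps(\Kmu\psi)\Hmu[\tau]$ coming from $\tfrac\eps2 z^2$. This gives $-(\Hmu[\psi\tau]-(\Kmu\psi)\Hmu[\tau])$.

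The only step requiring care is the bookkeeping of the factor $2$. The second derivative is twice the quadratic coefficient on the diagonal, while in the mixed derivative the cross term appears once. I would double-check each case by differentiating $t\mapsto T_\eps(\fmu+t\psi_1+s\psi_2,\mu+\dots)$ directly in $t$ and $s$ at zero, rather than through the Taylor polynomial. Everything is pointwise in $y$ and uniform, since $k_\mu$ is bounded and $\tau$ has bounded density. The identities therefore hold in $\CX$.
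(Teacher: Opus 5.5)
Your proof is correct and is essentially the paper's argument. The paper also writes $T_\eps=-\eps\log N$, notes that the derivatives of $N$ divided by its diagonal value $e^{-\fmu/\eps}$ are the tilted averages $\eps^{-1}\Kmu\psi$, $\Hmu[\tau]$, $\eps^{-2}\Kmu(\psi_1\psi_2)$, $\eps^{-1}\Hmu[\psi\,\tau]$ and $0$, and applies the chain rule for $\log$; this is exactly your normalization to $W$ with $W(0,0)=1$ followed by the expansion of $\log(1+z)$. Your direct argument that $(\psi,\tau)\mapsto W$ is jointly smooth into $\CX$ replaces the paper's appeal to the proof of Lemma~\ref{lem:smooth}, and it is sound.
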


\begin{proof}
Write $N(\varphi,\rho)=\int_X e^{\left(\varphi(x)-c(x,\cdot)\right)/\eps}\,d\rho(x)$, so that $T_\eps=-\eps\log N$; the denominator at the diagonal is $N=e^{-\fmu/\eps}$ by \eqref{eq:fixpunkt}, and each normalized derivative of $N$ is one of the tilted averages: $D_\varphi N[\psi]/N=\eps^{-1}\Kmu\psi$, $D_\rho N[\tau]/N=\Hmu[\tau]$, $D^{2}_{\varphi\varphi}N\left[\psi_1,\psi_2\right]/N=\eps^{-2}\Kmu(\psi_1\psi_2)$, $D^{2}_{\varphi\rho}N[\psi;\tau]/N=\eps^{-1}\Hmu[\psi\,\tau]$, $D^{2}_{\rho\rho}N=0$. The chain rule $DT_\eps=-\eps\,DN/N$ and $D^{2}T_\eps=-\eps\left(D^{2}N/N-\left(DN/N\right)\otimes\left(DN/N\right)\right)$ assembles the displays; differentiability in the required joint sense is contained in the smoothness of the composed map established in the proof of Lemma~\ref{lem:smooth}. All three forms are invariant under constant shifts of $\varphi$, consistent with the gauge freedom.
\end{proof}

Differentiating the two equations of the system once at the origin, with \eqref{eq:DTeps}, yields $\partial_sf=-\Kmu\partial_sg$ and $\partial_sg=-\Kmu\partial_sf-\eps\Hmu[v]$, and the same pair in $r$ with the roles of $f$ and $g$ exchanged; solving with Proposition~\ref{prop:operators}(iii) yields, in the present notation,
\begin{equation}\label{eq:firstorder}
\begin{aligned}
\partial_s f\big|_{0}&=\eps\,\Kmu\varphi_v,
&\qquad
\partial_s g\big|_{0}&=-\eps\,\varphi_v,\\
\partial_r f\big|_{0}&=-\eps\,\varphi_w,
&\qquad
\partial_r g\big|_{0}&=\eps\,\Kmu\varphi_w,
\end{aligned}
\end{equation}
the derivatives of the potentials in their marginals of \citet[Proposition~3.14]{lavenant_luckhardt_mordant_et_al_riemannian_2025}.

\begin{proposition}[Mixed second derivative of the potential]\label{prop:Dsqf}
For $v,w\in V_\mu$, with the gauge above,
\begin{equation}\label{eq:Dsqf}
D^{2}f_\mu(v;w)=\eps\left(\id-\Kmu^{2}\right)^{-1}\Xi_\mu(v,w),
\end{equation}
where
\begin{equation}\label{eq:Xi}
\begin{aligned}
\Xi_\mu(v,w):={}&\operatorname{Cov}_\mu\left(\varphi_v,\Kmu\varphi_w\right)-\Kmu\operatorname{Cov}_\mu\left(\Kmu\varphi_v,\varphi_w\right)
+\Hmu\left[\varphi_v\,w\right]-(\Kmu\varphi_v)\cdot\Hmu[w]\\
&-\Kmu\Hmu\left[\varphi_w\,v\right]+\Kmu\left((\Kmu\varphi_w)\cdot\Hmu[v]\right).
\end{aligned}
\end{equation}
\end{proposition}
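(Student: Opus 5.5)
We differentiate the gauged Schrödinger system $f_{s,r}=T_\eps(g_{s,r},\mu+rw)$, $g_{s,r}=T_\eps(f_{s,r},\mu+sv)$ once in $s$ and once in $r$ at the origin. By Lemma~\ref{lem:smooth} the gauged pair is $C^\infty$ in $(s,r)$ with values in $\CX$, and by Lemma~\ref{lem:Tsecond} the map $T_\eps$ is twice differentiable at $(\fmu,\mu)$. So the chain rule applies to both equations and gives, for the mixed derivatives $F:=\partial_s\partial_rf|_0$ and $G:=\partial_s\partial_rg|_0$,
\[
F=-\Kmu G+A,\qquad G=-\Kmu F+B.
\]
Here $A$ and $B$ collect the second-order terms of $T_\eps$.

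For the first equation, the measure argument $\mu+rw$ does not depend on $s$. The inhomogeneity is therefore
\[
A=D^2_{\varphi\varphi}T_\eps[\partial_sg,\partial_rg]+D^2_{\varphi\rho}T_\eps[\partial_sg;w],
\]
and there is no $\rho\rho$ term. For the second equation, with measure argument $\mu+sv$,
\[
B=D^2_{\varphi\varphi}T_\eps[\partial_sf,\partial_rf]+D^2_{\varphi\rho}T_\eps[\partial_rf;v].
\]
Into these we insert the first-order derivatives \eqref{eq:firstorder} and the formulas of Lemma~\ref{lem:Tsecond}. Since $\operatorname{Cov}_\mu$ is bilinear, the signs and factors $\eps$ combine as follows:
\[
A=-\eps\operatorname{Cov}_\mu(\varphi_v,\Kmu\varphi_w)-\eps\bigl(\Hmu[\varphi_vw]-(\Kmu\varphi_v)\Hmu[w]\bigr),
\]
\[
B=\eps\operatorname{Cov}_\mu(\Kmu\varphi_v,\varphi_w)+\eps\bigl(\Hmu[\varphi_wv]-(\Kmu\varphi_w)\Hmu[v]\bigr).
\]
Here I used $D^2_{\varphi\varphi}T_\eps[-\eps\varphi_v,\eps\Kmu\varphi_w]=\eps\operatorname{Cov}_\mu(\varphi_v,\Kmu\varphi_w)$ and the analogous identities. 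The overall sign must be checked carefully. I expect this bookkeeping of signs and factors of $\eps$ to be the main obstacle: each second derivative carries its own sign convention, and the target formula fixes them only jointly.

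Substituting the second equation into the first gives
\[
(\id-\Kmu^{2})F=A-\Kmu B.
\]
We compare $A-\Kmu B$ with $\eps\,\Xi_\mu(v,w)$ of \eqref{eq:Xi}. The six terms of $\Xi_\mu$ should match term by term, up to a global sign. If that sign comes out reversed, the discrepancy has to be traced either to the sign convention of $D^2_{\varphi\varphi}T_\eps$ or to which marginal carries $v$. The definition of $D^2f_\mu(v;w)$ in Proposition~\ref{prop:gammapot} puts $v$ in the first marginal, so $f$ is paired with $g$ through $T_\eps(\cdot,\mu+rw)$. I would fix the sign by checking the identity at the uniform measure on the torus, where everything is diagonal.

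Finally we invert $\id-\Kmu^2$. By the gauge, $F$ has $\mu$-mean zero. The right-hand side $A-\Kmu B$ also has $\mu$-mean zero, since it equals $(\id-\Kmu^2)F$ and $\Kmu$ preserves $\mu$-means. By Lemma~\ref{lem:flat}(ii), $\id-\Kmu^2$ is a bijection of $C_0(X,\mu)$, so the resolvent convention yields $F=\eps(\id-\Kmu^2)^{-1}\Xi_\mu(v,w)$. This is \eqref{eq:Dsqf}, since the class of $F$ in $\CX/\mathbb R$ is $D^2f_\mu(v;w)$.
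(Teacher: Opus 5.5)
Your route is the paper's. You differentiate the gauged system once in each parameter and get $F+\Kmu G=A$, $G+\Kmu F=B$, which is \eqref{eq:mixedsys}. You then eliminate $G$ to obtain $(\id-\Kmu^{2})F=A-\Kmu B$ and invert on mean-zero functions using the gauge. Your chain-rule bookkeeping is right: there is no $\rho\rho$ term, the mixed term carries $w$ in the first equation and $v$ in the second, and your $B$ is correct.

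Your displayed $A$, however, has the wrong overall sign. It contradicts the identity $D^2_{\varphi\varphi}T_\eps[-\eps\varphi_v,\eps\Kmu\varphi_w]=\eps\operatorname{Cov}_\mu(\varphi_v,\Kmu\varphi_w)$ that you state right after it, and that identity is the correct one.

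The signs are in fact forced, so there is no real bookkeeping obstacle. Each term of $A$ contains exactly one factor $\partial_sg=-\eps\varphi_v$; the other factor in the covariance is $\partial_rg=+\eps\Kmu\varphi_w$. Likewise, each term of $B$ contains exactly one factor $\partial_rf=-\eps\varphi_w$. Both second derivatives in Lemma~\ref{lem:Tsecond} carry a minus sign, so every term of $A$ and $B$ comes with a plus sign:
\[
A=\eps\operatorname{Cov}_\mu(\varphi_v,\Kmu\varphi_w)+\eps\bigl(\Hmu[\varphi_v\,w]-(\Kmu\varphi_v)\cdot\Hmu[w]\bigr).
\]
With this $A$, the identity $A-\Kmu B=\eps\,\Xi_\mu(v,w)$ holds term by term, in the order of \eqref{eq:Xi}, and no sign is left undetermined.

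The check at the uniform measure on the torus is therefore unnecessary. It also could not have served as the proof step you intended. With your displayed $A$, the difference $A-\Kmu B$ disagrees with $\eps\,\Xi_\mu(v,w)$ in the sign of three of the six terms, not by a global sign, so no overall sign convention reconciles them.

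With $A$ corrected, your argument is complete and coincides with the paper's proof.
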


\begin{proof}
Differentiate $f_{s,r}=T_\eps\left(g_{s,r},\,\mu+rw\right)$ in $r$, then in $s$, and $g_{s,r}=T_\eps\left(f_{s,r},\,\mu+sv\right)$ in $s$, then in $r$, all at the origin; the base point varies in the function slot only, in the first equation through $g$, in the second through $f$, so Lemma~\ref{lem:Tsecond} yields, with the first formula of \eqref{eq:DTeps} and writing $f_{sr}=\partial_s\partial_rf\big|_{0}$, $g_{sr}=\partial_s\partial_rg\big|_{0}$,
\begin{equation}\label{eq:mixedsys}
\begin{aligned}
f_{sr}+\Kmu\,g_{sr}&=-\frac1\eps\operatorname{Cov}_\mu\left(\partial_sg,\partial_rg\right)
-\Hmu\left[\partial_sg\cdot w\right]+(\Kmu\partial_sg)\cdot\Hmu[w],\\
g_{sr}+\Kmu\,f_{sr}&=-\frac1\eps\operatorname{Cov}_\mu\left(\partial_sf,\partial_rf\right)
-\Hmu\left[\partial_rf\cdot v\right]+(\Kmu\partial_rf)\cdot\Hmu[v],
\end{aligned}
\end{equation}
a $2\times2$ linear system for $\left(f_{sr},g_{sr}\right)$ of the same shape as the first-order system before \eqref{eq:firstorder}. Eliminating $g_{sr}$ yields $\left(\id-\Kmu^{2}\right)f_{sr}=\eps\,\Xi_\mu(v,w)$ after the substitution of \eqref{eq:firstorder}, the six terms of \eqref{eq:Xi} arising in the displayed order from the two covariance terms and the four measure-slot terms. The right side is automatically $\mu$-mean-zero, being in the range of $\id-\Kmu^{2}$, and $f_{sr}$ is mean-zero by the gauge, so the inversion on the mean-zero subspace, Proposition~\ref{prop:operators}(iii) in the resolvent convention, yields \eqref{eq:Dsqf}; the companion $g_{sr}$ follows from the second equation of \eqref{eq:mixedsys}.
\end{proof}

By Proposition~\ref{prop:gammapot}, the connection form of Theorem~\ref{thm:collapse} becomes explicit:
\begin{equation}\label{eq:gammaop}
\Gamma_\mu(u,v;w)=-\frac{\eps}{2}\left\langle u,\ \left(\id-\Kmu^{2}\right)^{-1}\Xi_\mu(v,w)\right\rangle.
\end{equation}
The symmetry of the right side under the exchange of $u$ and $v$, guaranteed by Theorem~\ref{thm:collapse}(iii), is not visible in \eqref{eq:Xi} and constitutes a nontrivial family of operator identities.